\newcommand{\RN}[1]{%
  \textup{\uppercase\expandafter{\romannumeral#1}}%
}
\numberwithin{equation}{section}
\crefname{equation}{}{}
\newtheorem{theorem}{Theorem}[section]
\newtheorem{lemma}[theorem]{Lemma}
\newtheorem{proposition}[theorem]{Proposition}
\newtheorem{corollary}[theorem]{Corollary}
\theoremstyle{definition}
\newtheorem{definition}[theorem]{Definition}
\newtheorem{example}[theorem]{Example}
\theoremstyle{remark}
\newtheorem{remark}[theorem]{Remark}
\tikzstyle{line}=[draw] 
\author{Yu Zhao}
\title{A Categorical Quantum Toroidal Action On The Hilbert Schemes}
\date{\today}
\begin{document}
\maketitle
\begin{center}
	\small{\textit{To Andrei, Lucas and Linyuan with gratitude}}
\end{center}
\begin{abstract}
  We categorify the commutation of Nakajima's Heisenberg operators $P_{\pm 1}$ and their infinitely many counterparts in the quantum toroidal algebra $U_{q_1,q_2}(\ddot{gl_1})$ acting on the Grothendieck groups of Hilbert schemes from \cite{MR2854154,schiffmann2013,negut2017shuffle,neguct2018hecke}. By combining our result with \cite{neguct2018hecke}, one obtains a geometric categorical $U_{q_1,q_2}(\ddot{gl_1})$ action on the derived category of Hilbert schemes. Our main technical tool is a detailed geometric study of certain nested Hilbert schemes of triples and quadruples, through the lens of the minimal model program, by showing that these nested Hilbert schemes are either canonical or semi-divisorial log terminal singularities.
\end{abstract}

\section{Introduction}

\subsection{The Description of Our Result}

The quantum toroidal algebra $U_{q_1,q_2}(\ddot{gl}_1)$ (\cref{def:2.1}) is an affinization of the quantum Heisenberg algebra, studied by Ding-Iohara-Miki \cite{MR1463869,MR2377852}, and has been realized in several contexts:
\begin{itemize}
\item the elliptic Hall algebra in \cite{MR2922373,MR2886289},
\item the double shuffle algebra in \cite{MR2566895,feigin2001quantized,MR3283004},
\item the trace of the deformed Khovanov Heisenberg category in \cite{MR3874690}(when $q_1=q_2$).
\end{itemize}

Given a smooth quasi-projective surface $S$ over $k=\mathbb{C}$, let
$$\mathcal{M}=\bigsqcup_{n=0}^\infty S^{[n]}$$
be the Hilbert schemes of points on $S$. Schiffmann-Vasserot \cite{schiffmann2013}, Feigin-Tsymbaliuk \cite{MR2854154} and Negu{\c t} \cite{neguct2018hecke} constructed the $U_{q_1,q_2}(\ddot{gl}_1)$ action on the Grothendieck group of $\mathcal{M}$. It generalizes the action of
\begin{itemize}
\item the Heisenberg algebra (Nakajima \cite{MR1711344} and Grojnowski \cite{grojnowski1996instantons})
\item the $W$ algebra (Li-Qin-Wang \cite{10.1155/S1073792802110129})
\end{itemize}
 on the cohomology of Hilbert schemes. There are already some applications of this action in algebraic geometry, like the Beauville-Voison conjecture for the Hilbert schemes of points on $K3$ surfaces \cite{maulik2020}.

The purpose of this paper is to categorify the above quantum toroidal algebra action. We prove that
\begin{theorem}[\cref{thm:7.1}]
  \label{thm:1.1}
  Consider the correspondences $e_k,f_k:D^b(\mathcal{M})\to D^b(\mathcal{M}\times S)$ induced from:
  \begin{align*}
    e_{k}:=\mathcal{L}^{k}\mathcal{O}_{S^{[n,n+1]}}\in D^b(S^{[n]}\times S^{[n+1]}\times S) \\
    f_k:=\mathcal{L}^{k-1}\mathcal{O}_{S^{[n,n+1]}}[1]\in D^b(S^{[n+1]}\times S^{[n]}\times S).
  \end{align*}
  where
  $S^{[n,n+1]}$ is the nested Hilbert scheme
$$S^{[n,n+1]}:=\{(\mathcal{I}_n,\mathcal{I}_{n+1},x)\in S^{[n]}\times S^{[n+1]}\times S|\mathcal{I}_{n+1}\subset \mathcal{I}_n, \mathcal{I}_n/\mathcal{I}_{n+1}=k_x\}$$
and the line bundle $\mathcal{L}$ on $S^{[n,n+1]}$ has fibers equal to $\mathcal{I}_{n}/\mathcal{I}_{n+1}$. Then
\begin{enumerate}
\item  For every two integers $m$ and $r$, there exists natural transformations
  \begin{equation}
    \label{1cone}
    \begin{cases}
      f_re_{m-r}\to e_{m-r}f_r & \text{ if } m>0 \\
      e_{m-r}f_r\to f_re_{m-r} & \text{ if } m<0 \\
      f_re_{-r}=e_rf_{-r}\oplus \mathcal{O}_{\Delta}[1]
    \end{cases}
  \end{equation}
   where $\Delta$ is the diagonal of $\mathcal{M}\times \mathcal{M}\times S \times S$.
 \item  When $m\neq 0$, the cone of the natural transformations in \cref{1cone} has a filtration with associated graded object
   \begin{equation*}
     \begin{cases}
       \bigoplus\limits_{k=0}^{m-1}R\Delta_*(h_{m,k}^{+}) & \text{ if } m>0 \\
       \bigoplus\limits_{k=m+1}^{0}R\Delta_*(h_{m,k}^{-}) & \text{ if } m<0
     \end{cases}
   \end{equation*}
   where $h_{m,k}^{+},h_{m,k}^{-}\in D^{b}(\mathcal{M}\times S)$ are defined in \cref{hmk} and are complexes of universal sheaves on $\mathcal{M}\times S$.

 \item (\cref{thm:4.9})  At the level of Grothendieck groups, we have the formula:
    \begin{align*}
      (1-[\omega_S])\sum_{k=0}^{m-1}[h_{m,k}^+]=h_m^{+} & \qquad m>0 \\
      (1-[\omega_S])\sum_{k=m+1}^{0}[h_{m,k}^-]=h_{-m}^{-} & \qquad m<0
    \end{align*}
   where $\omega_S$ is the canonical line bundle of $S$ and $h_m^\pm$ is defined in \cite{negut2017shuffle} (also see \cref{4eq:lem2.11}) .
 \end{enumerate}
  Note that the non-triviality of the extension is a feature of the derived category statement, which is not visible at the level of Grothendieck groups. \cref{ext2} provides a precise extension formula.
\end{theorem}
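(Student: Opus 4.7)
The plan is to realize both compositions $f_r e_{m-r}$ and $e_{m-r} f_r$ as Fourier--Mukai kernels supported on nested Hilbert schemes of triples, and then extract the natural transformations, their cones, and the Grothendieck-group identities from the birational geometry of these spaces. Computing the convolutions directly, $f_r e_{m-r}$ is a twisted structure sheaf on the moduli of pairs $(\mathcal{I}_n, \mathcal{I}'_n)$ sharing a common length-$(n+1)$ subideal (triples with a common ``child''), while $e_{m-r} f_r$ lives on pairs sharing a common length-$(n-1)$ superideal (triples with a common ``parent''). The twists are powers of the two tautological quotient line bundles $\mathcal{L}, \mathcal{L}'$ attached to the two flags, with total twist determined by $m$ and $r$. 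Both triples admit forgetful maps to a common moduli of pairs $(\mathcal{I}_n, \mathcal{I}'_n)$ constrained so that $\mathcal{I}_n \cap \mathcal{I}'_n$ has colength at least $n+1$ (equivalently $\mathcal{I}_n + \mathcal{I}'_n$ has colength at most $n-1$), and the natural transformations in \cref{1cone} come from unit/counit maps along these forgetful morphisms.

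For part~(1), the direction of the natural transformation is dictated by the sign of $m$, which controls which of the two triples dominates the ambient pair moduli and hence which pushforward maps to which. In the balanced case captured by the last line of \cref{1cone}, the closed stratum $\mathcal{I}_n = \mathcal{I}'_n$ forces both triples to collapse; a Koszul-type resolution of the diagonal component then produces the $\mathcal{O}_\Delta[1]$ summand. For part~(2), I would stratify each triple by the coincidence of the two marked points on $S$ and the extent to which the two ideals agree. Each stratum is essentially a projective bundle over $\mathcal{M} \times S$ with tautological line bundle a power of $\mathcal{L}$, and restricting the twisted structure sheaf to the $k$-th stratum yields the universal complex $h_{m,k}^{\pm}$. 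Turning these restrictions into an actual filtration of the cone requires vanishing of the obstructing higher direct images, and this is where the minimal-model-program input from the abstract enters: once the triple and quadruple nested Hilbert schemes are shown to have canonical or semi-divisorial log terminal singularities, Kawamata--Viehweg-type vanishing forces the filtration to exist with the claimed associated graded.

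Part~(3) should then follow formally from~(2). The prefactor $1 - [\omega_S]$ arises as the K-theoretic Euler class of the normal bundle of the diagonal in $S \times S$, which appears when pushing forward to $\mathcal{M} \times S$ along $R\Delta_*$; summing the classes $[h_{m,k}^{\pm}]$ and matching with the shuffle-algebra formula for $h_m^{\pm}$ recalled in \cref{4eq:lem2.11} is a bookkeeping exercise once the associated graded of~(2) is in hand. The main obstacle I anticipate is the singularity analysis: proving that the triple and quadruple nested Hilbert schemes have canonical or semi-divisorial log terminal singularities requires careful local analysis of their defining equations, and the entire filtration in~(2), as well as the existence of the natural transformations in~(1), ultimately rests on the vanishing theorems this classification supplies.
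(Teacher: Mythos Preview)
Your proposal has the right opening move---realizing $f_re_{m-r}$ and $e_{m-r}f_r$ as pushforwards from the triple moduli $\mathfrak{Z}_+$ and $\mathfrak{Z}_-$---but it misses the central mechanism of the comparison. The paper does not relate the two triples via a common pair moduli \emph{below} them with unit/counit maps; instead it introduces the smooth \emph{quadruple} moduli space $\mathfrak{Y}$ (parametrizing $\mathcal{I}_{n+1}\subset\mathcal{I}_n,\mathcal{I}_n'\subset\mathcal{I}_{n-1}$) sitting \emph{above} both, with resolutions $\alpha_\pm:\mathfrak{Y}\to\mathfrak{Z}_\pm$. The MMP input is used not for Kawamata--Viehweg vanishing but to show that $\mathfrak{Z}_-$ and the component $W_0\subset\mathfrak{Z}_+$ have canonical (hence rational) singularities, giving $R\alpha_{-*}\mathcal{O}_{\mathfrak{Y}}=\mathcal{O}_{\mathfrak{Z}_-}$ and $R\alpha_{+*}\mathcal{O}_{\mathfrak{Y}}=\mathcal{O}_{W_0}$. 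This lifts both compositions to $\mathfrak{Y}$, where they become the \emph{same} line bundle twisted by different powers of $\mathcal{O}(\Delta_{\mathfrak{Y}})$, with $\Delta_{\mathfrak{Y}}\cong S^{[n-1,n,n+1]}$ the divisor where $\mathcal{I}_n=\mathcal{I}_n'$; the relation $\mathcal{L}_1\mathcal{L}_2'^{-1}=\mathcal{O}(-\Delta_{\mathfrak{Y}})$ is what converts the line-bundle exponents into a divisor shift of size $m$.

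Consequently the filtration in part~(2) is not a stratification of the triple moduli. It is the sequence $a_{m,r}^k:=R\theta_*(\mathcal{L}_1'^{m-r}\mathcal{L}_1^{r-1}\mathcal{O}(k\Delta_{\mathfrak{Y}}))[1]$, and each step comes from tensoring the divisor sequence $0\to\mathcal{O}(-\Delta_{\mathfrak{Y}})\to\mathcal{O}_{\mathfrak{Y}}\to\mathcal{O}_{\Delta_{\mathfrak{Y}}}\to 0$ and pushing forward; the graded piece $h_{m,k}^\pm$ is the pushforward of $\mathcal{L}_1^{m-1-k}\mathcal{L}_2^k$ from $S^{[n-1,n,n+1]}$, not a restriction to a stratum. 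Your proposed stratification by ``coincidence of the marked points and extent of ideal agreement'' would produce at most two pieces (diagonal versus off-diagonal), never $|m|$: the integer $m$ enters only through the line-bundle exponent, not through any geometric decomposition of $\mathfrak{Z}_\pm$. Two smaller corrections: the $m=0$ splitting comes from the fact that $\mathfrak{Z}_+=W_0\cup W_1$ has two irreducible components (this is where semi-dlt is used), not from a Koszul resolution of the diagonal; and the factor $(1-[\omega_S])$ in part~(3) is an algebraic identity among symmetric and exterior powers of $\mathcal{I}_n$ obtained from the categorical projection lemma for $S^{[n,n+1]}$ and $S^{[n-1,n,n+1]}$ as projectivizations, not the Euler class of the diagonal normal bundle.
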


The positive part of the quantum toroidal algebra action was already categorified in \cite{neguct2018hecke}. \cref{thm:1.1} categorifies the commutation of the positive part and the negative part, and thus accounts for the action of the whole quantum toroidal algebra.
\begin{remark}
  We have not categorified all the relations defining the elliptic Hall algebra of \cite{MR2922373} (see \cite{neguct2018hecke} for more discussions).
\end{remark}

\begin{remark}
 Our categorification is a geometric categorification in the sense of \cite{MR2668555}. The $2$-categorification of the quantum toroidal algebra is still unclear to us.
\end{remark}
\subsection{Outline of The Proof}
Let us first review the categorification of the commutation of $e_k$ and $e_l$ for different $k$ and $l$ in \cite{neguct2018hecke}. Consider the moduli space $\mathfrak{Z}_2,\mathfrak{Z}_{2}'$ which parameterize diagrams

\begin{minipage}{0.5\linewidth}
  \begin{equation}
  \begin{tikzcd}
    \mathcal{I}_{n-1} \ar[r,hook,"x"] & \mathcal{I}_{n}  \ar[r,hook,"y"] &  \mathcal{I}_{n+1}
  \end{tikzcd}
\end{equation}
\end{minipage}
\begin{minipage}{0.5\linewidth}
  \begin{equation}
  \begin{tikzcd}
    \mathcal{I}_{n-1}  \ar[r,hook,"y"] & \mathcal{I}_{n}'  \ar[r,hook,"x"] &  \mathcal{I}_{n+1}
  \end{tikzcd}
\end{equation}
\end{minipage}
respectively, of ideal sheaves where each successive inclusion is colength $1$ and supported at the point indicated on the diagrams.  Then $e_ke_l$ and $e_le_k$ are the derived pushfoward of line bundles on $\mathfrak{Z}_2$ and $\mathfrak{Z}_2'$ to $S^{[n-1]}\times S^{[n+1]}\times S\times S$ respectively.

In order to compare $e_ke_l$ and $e_le_k$, \cite{neguct2018hecke} introduced the quadruple moduli space $\mathfrak{Y}$ which parameterize diagrams
\begin{equation}
  \begin{tikzcd}
    &\mathcal{I}_n  \ar[rd,hook,"y"] & \\
    \mathcal{I}_{n+1} \ar[ru,hook,"x"] \ar[rd,hook,swap,"y"] & & \mathcal{I}_{n-1}\\
    & \mathcal{I}_n' \ar[ru,hook,"x",swap] & 
  \end{tikzcd}
\end{equation}
of ideal sheaves where each successive inclusion is colength $1$ and supported at the point indicated on the diagrams. $\mathfrak{Z}_{+}$ is smooth and induces resolutions of $\mathfrak{Z}_{2}$ and $\mathfrak{Z}_{2}'$. Proposition 2.30 of \cite{neguct2018hecke} proved that $\mathfrak{Z}_{2}$ and $\mathfrak{Z}_{2}'$ are rational singularities, based on the fact that any fiber of the resolution has dimension $\leq 1$. Thus $e_ke_l$ and $e_le_k$ could be compared through line bundles on $\mathfrak{Y}$.

Now in order to compare $f_re_{m-r}$ and $e_{m-r}f_r$, we introduce the triple moduli spaces $\mathfrak{Z}_+,\mathfrak{Z}_{-}$ which parameterize diagrams

\begin{minipage}{0.5\linewidth}
  \begin{equation}
  \begin{tikzcd}
    &\mathcal{I}_n \\
    \mathcal{I}_{n+1} \ar[ru,hook,"x"] \ar[rd,hook,"y",swap] & \\
    & \mathcal{I}_n'
  \end{tikzcd}
\end{equation}
\end{minipage}
\begin{minipage}{0.5\linewidth}
  \begin{equation}
  \begin{tikzcd}
    \mathcal{I}_{n} \ar[rd,hook,"y"] & \\
    & \mathcal{I}_{n-1} \\
    \mathcal{I}_n' \ar[ru,hook,"x",swap] &
  \end{tikzcd}
\end{equation}
\end{minipage}
Then $f_re_{m-r}$ and $e_{m-r}f_r$ are the derived push forward of line bundles on $\mathfrak{Z}_+,\mathfrak{Z}_{-}$ respectively. The quadruple moduli space $\mathfrak{Y}$ still induces resolutions of $\mathfrak{Z}_{-}$, but $\mathfrak{Z}_{+}$ have two irreducible components. One irreducible component is $S^{[n,n+1]}$, denoted by $W_1$ and the other irreducible component is denoted by $W_0$  in \cref{sec:7}. $\mathfrak{Y}$ induces a resolution of $W_0$.

In order to compare $f_re_{m-r}$ and $e_{m-r}f_r$ through line bundles on $\mathfrak{Y}$, one must prove that $\mathfrak{Z}_{-}$ and $W_0$ are rational singularities. The approach in \cite{neguct2018hecke} did not work here, as the fiber could have pretty large dimensions. Instead, we study the singularity structure of $\mathfrak{Z}_{+}$ and $\mathfrak{Z}_-$ through the viewpoint of the minimal model program (MMP) \cite{Kollar-Mori,MR3057950}. We prove that
\begin{proposition}[\cref{1pr:3.1} and \cref{besemidlt}]
  \label{above}
  The pair $(\mathfrak{Z}_+,0)$ is semi-dlt. $\mathcal{Z}_{-}$ and $W_0$ are canonical singularities.
\end{proposition}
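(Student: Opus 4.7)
The plan is to use the smooth quadruple moduli space $\mathfrak{Y}$ as a resolution tool and to carry out local analyses on $\mathfrak{Z}_{\pm}$ via minimal model program techniques. All computations are local on $S$, so one may work formally at a point and parameterize nested ideals inside $k[[x,y]]$ by their combinatorial normal forms.

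I would first treat the canonical singularity claims for $\mathfrak{Z}_-$ and $W_0$. As recalled in the outline above, the natural forgetful maps $\mathfrak{Y}\to \mathfrak{Z}_-$ and $\mathfrak{Y}\to W_0$ are birational, and since $\mathfrak{Y}$ is smooth they are resolutions. To conclude canonical, it suffices to write the relative canonical divisor $K_{\mathfrak{Y}/\mathfrak{Z}_-}$, respectively $K_{\mathfrak{Y}/W_0}$, as an effective combination of exceptional divisors. The exceptional loci lie over the diagonal $\{x=y\}\subset S\times S$ where the two colength-one inclusions collide, and a deformation-theoretic calculation using the tangent complexes of the nested Hilbert schemes produces the Jacobian of the forgetful map along each stratum. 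Checking that the resulting discrepancy is nonnegative on each exceptional divisor yields the canonical property.

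For the semi-dlt assertion on $(\mathfrak{Z}_+,0)$, I would decompose $\mathfrak{Z}_+=W_0\cup W_1$ with $W_1=S^{[n,n+1]}$ and focus on the reduced intersection $D:=W_0\cap W_1$. The steps are: verify that $\mathfrak{Z}_+$ is Gorenstein in codimension one; show that at a general point of $D$ the scheme $\mathfrak{Z}_+$ is locally the transversal union of two smooth branches, so that $D$ is a normal crossings divisor in codimension one; and confirm that the normalization of each component, paired with the conductor $D$, is dlt. Smoothness of $S^{[n,n+1]}$ handles $W_1$ immediately, and for $W_0$ the canonical property established above together with the transversality of $D$ reduces dlt to a direct check at the generic point of $D$.

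The main technical obstacle will be the local discrepancy computation over the diagonal stratum, where the fibers of $\mathfrak{Y}\to\mathfrak{Z}_\pm$ may have arbitrarily large dimension and the fiber-dimension shortcut used in \cite{neguct2018hecke} is no longer available. I expect that an explicit torus-equivariant local model of the punctual nested Hilbert scheme, combined with Reid's criterion for canonical singularities and a Koll\'ar--Mori style discrepancy calculation \cite{Kollar-Mori,MR3057950}, will control this stratum and close out both \cref{1pr:3.1} and \cref{besemidlt}. The semi-dlt half is then comparatively formal once the transversality of $W_0$ and $W_1$ along $D$ is pinned down locally.
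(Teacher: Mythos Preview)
Your plan is in the right direction but misses the paper's key simplifications and has a real gap in the semi-dlt argument.

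For $\mathfrak{Z}_-$ the paper does not compute any deformation-theoretic Jacobian or use a torus-equivariant local model (indeed no torus acts for a general surface $S$). Instead it observes that the exceptional locus of $\alpha_-:\mathfrak{Y}\to\mathfrak{Z}_-$ is the single irreducible divisor $\Delta_{\mathfrak{Y}}\cong S^{[n-1,n,n+1]}$, so only one discrepancy must be computed; and since discrepancy is determined at the generic point of the divisor, one may pass to the open locus where $\mathcal{I}_{n-1}$ is supported away from $\{x,y\}$. There the picture is literally the $n=1$ case, namely $Bl_{\Delta_S}(S\times S)\to S\times S$, and the blow-up formula gives discrepancy $1$. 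For $W_0$ the paper goes further: after reducing to $n=2$ on a similar open set, $\alpha_+:\mathfrak{Y}\to W_0$ is seen to be an isomorphism away from the locus $W_2$ (where $x$ has length $\ge 3$ in $\mathcal{O}/\mathcal{I}_n$), and both $W_2$ and its preimage have codimension $\ge 3$. Hence this resolution has \emph{no} exceptional divisors at all, and canonical is immediate once normality is known. The ``large fiber dimension'' obstruction you anticipate never enters.

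The semi-dlt argument has a genuine gap. Smoothness of $W_1=S^{[n,n+1]}$ does not by itself make $(W_1,D)$ dlt; one must know that the boundary $D=W_0\cap W_1=p_n^{-1}\mathcal{Z}_n$ is klt. The paper establishes this separately (showing $D$ is canonical via its resolution by $S^{[n-1,n,n+1]}$) and then invokes inversion of adjunction (\cref{inversion}) to get $(W_1,D)$ plt. Combined with the plt property of $(\overline{W_0},\overline{D})$ coming from the small-resolution observation above, and a reduction to the $n=2$ case showing the non-semi-snc locus has codimension $\ge 3$, the paper applies Koll\'ar's criterion (\cref{sdlt}) to conclude that $(\mathfrak{Z}_+,0)$ is semi-dlt and, as a byproduct, that $W_0$ is normal. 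Your sketch omits both the inversion-of-adjunction input and this codimension-$3$ criterion, and a ``check at the generic point of $D$'' is not sufficient for dlt, which must be verified over the entire non-snc locus.
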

We prove \cref{above} by explicitly computing the discrepancy (see \cref{sec:7} for the definitions of semi-dlt, canonical singularities and the discrepancy). Canonical singularities are always rational singularities by \cref{thm:5.22}.

\subsection{Higher Rank Stable Sheaves and the Deformed $\mathcal{W}$-algebra}
The quantum toroidal algebra also acts on the Grothendieck group of higher rank stable sheaves \cite{neguct2018hecke}. The action factors through the deformed $W$-algebra \cite{neguct2017w}, which leads to the AGT correspondences for algebraic surfaces.

We will pursue the categorification of the above algebra action in the future. Different to the case of Hilbert schemes, $\mathfrak{Z}_+$ is no longer equi-dimensional and the obstruction bundle has to be accounted for.

\subsection{Categorical Heisenberg Actions}

Khovanov \cite{MR3205569} defined the Heisenberg category through graphical calculus. Cautis-Licata \cite{MR2988902} constructed a categorical Heisenberg action on the derived category of Hilbert schemes of points of the minimal resolution of the type ADE singularities. Krug \cite{MR3774400} constructed the weak categorical Heisenberg action on the derived category of Hilbert schemes of points on smooth surfaces. Our categorification is different from those above, as it is given in terms of explicit correspondences and independent of the derived McKay correspondence.  


Although the higher Nakajima operators were categorified by the objects $e_{(0,...,0)}$ of \cite{neguct2018hecke}, the relations between them (as well as the morphisms between them in Khovanov's Heisenberg category) are still unclear to us.
\subsection{Double Categorified Hall Algebra} The study of Cohomological Hall algebra was initiated by Kontsevich-Soibelman \cite{kontsevich2010cohomological} and Schiffmann-Vasserot \cite{MR3150250}. Kapranov-Vasserot \cite{kapranov2019cohomological} and the author \cite{zhao2019k} constructed the $K$-theoretic Hall algebra on surfaces, which was categorified by Porta-Sala \cite{porta2019categorification}. It also categorified the positive half of $U_{q}(\ddot{gl}_1)$ when $S=\mathbb{A}^2$. The relation between the categorified Hall algebra of minimal resolution of type A singularities and quivers was studied by Diaconescu-Porta-Sala \cite{diaconescu2020mckay}. On the other hand, the Drinfeld double of the categorified Hall algebra is still mysterious.  As an attempt to understand the action of the ``double Categorified Hall algebra'', it is natural to expect that our approach could be generalized to categorifications in other settings, like those of Toda \cite{toda2020hall} and Rapcak-Soibelman-Yang-Zhao \cite{rapcak2020cohomological}.

\subsection{Other Related Work}

Recently, Addington-Takahashi \cite{takahashi2020categorical} studied certain sequences of moduli spaces of sheaves on $K3$ surfaces and show that these sequences can be given the structure of a geometric categorical $\mathfrak{sl}_2$ action in the sense of \cite{MR2668555}. It would be interesting to explore the interactions between their action and ours.

Another related work is Jiang-Leung's projectivization formula \cite{jiang2018derived}. Through this formula, they obtained a semi-orthogonal decomposition of the derived category of the nested Hilbert schemes.

\subsection{The Organization of The Paper} The proof of the main theorem is in \cref{sec:6} and the extension formula is in \cref{sec:7}. The other sections are organized as follows:
\begin{description}
\item[\cref{sec:2}]  we review the action of $U_{q_1,q_2}(\ddot{gl}_1)$ on the Grothendieck group of Hilbert schemes \cite{MR2854154,schiffmann2013,negut2017shuffle,neguct2018hecke};
\item[\cref{sec:3}] we define $h_{m,k}^{+}\in D^b(\mathcal{M}\times S)$ and prove the third part of \cref{thm:1.1};
\item[\cref{sec:4}] we make a review of the singularity of the minimal model program.
\item[\cref{sec:5}] we study the singularity structures of $\mathfrak{Z}_{-}$ and $\mathfrak{Z}_{+}$ through the singularity theory of the minimal model program.
\end{description}

\subsection{Acknowledgment}  The author is grateful to Jihao Liu and Ziquan Zhuang for their help in understanding the minimal model program and Yuchen Liu for checking the \cref{sec:4}. The author would like to thank Qingyuan Jiang, Huachen Chen, Xiaolei Zhao, Shizhuo Zhang, Andreas Krug and Sabin Cautis for many interesting discussions on the subject. The author is supported by the NSF Career grant DMS-1845034.

\section{The Quantum Toroidal Algebra $U_{q_1,q_2}(\ddot{gl}_1)$ and the $K$-theory of Hilbert scheme of points on surfaces}
\label{sec:2}
In this section, we will review the action of $U_{q_1,q_2}(\ddot{gl}_1)$ on the $K$-theory on Hilbert scheme of points on surfaces from \cite{MR2854154,schiffmann2013,negut2017shuffle,neguct2018hecke}. It will be formulated in \cref{2.17}.

In this paper, we will denote $K(X)$ the Grothendieck group of coherent sheaves on $X$ for a scheme $X$. 
 \subsection{Hilbert Schemes and Nested Hilbert Schemes}
 \label{sec:Hilbert}
 Given an integer $n>0$ and a smooth quasi-projective surface $S$ over $k=\mathbb{C}$, let
 $$S^{[n]}:=\{\mathcal{I}_n\subset \mathcal{O}|\mathcal{O}/\mathcal{I}_n \text{ is dimension } 0 \text{ and length } n\}$$
 be the Hilbert scheme of $n$ points on $S$. There is a universal ideal sheaf on $S^{[n]}\times S$, still denoted by $\mathcal{I}_n$, and a universal closed subscheme $\mathcal{Z}_n\subset S^{[n]}\times S$. 
'\begin{proposition}[Proposition 2.11 of \cite{neguct2018hecke}]
  \label{1lm:2.2}
  There exists a resolution of $\mathcal{I}_{n}$ by
  \begin{equation}
    \label{eq:resolve}
  0\to W_n \xrightarrow{s} V_n \to \mathcal{I}_{n}\to 0    
  \end{equation}
  where $W_n$ and $V_n$ are locally free coherent sheaves with the same determinant. Let $w_n$ and $v_n$ be the rank of $W_n$ and $V_n$, respectively. Then $v_n-w_n=1$.
\end{proposition}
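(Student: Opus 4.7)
The plan is to produce the resolution by first bounding the projective dimension of $\mathcal{I}_n$ as a coherent sheaf on the smooth quasi-projective variety $S^{[n]}\times S$, and then globalizing via a surjection from a locally free sheaf.

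First I would observe that the universal subscheme $\mathcal{Z}_n\subset S^{[n]}\times S$ is finite and flat over $S^{[n]}$ along the first projection, by the defining property of the Hilbert functor. Since $S^{[n]}$ is smooth of dimension $2n$ by Fogarty's theorem, $\mathcal{Z}_n$ is itself Cohen--Macaulay of dimension $2n$. As the ambient $S^{[n]}\times S$ is smooth of dimension $2n+2$, the pushforward $\mathcal{O}_{\mathcal{Z}_n}$ is a Cohen--Macaulay coherent sheaf of codimension $2$, and the Auslander--Buchsbaum formula forces it to have projective dimension exactly $2$ at every point of its support. The tautological short exact sequence $0\to \mathcal{I}_n\to\mathcal{O}\to\mathcal{O}_{\mathcal{Z}_n}\to 0$ then gives $\mathrm{pd}(\mathcal{I}_n)\le 1$ everywhere.

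Using quasi-projectivity of $S^{[n]}\times S$, I would next choose a global surjection $V_n\twoheadrightarrow \mathcal{I}_n$ with $V_n$ locally free (for instance a sum of sufficiently negative twists of the structure sheaf), and set $W_n:=\ker(V_n\to\mathcal{I}_n)$. The projective dimension bound forces $W_n$ to be locally free, giving the desired two-term resolution. For the numerics, $\mathcal{I}_n$ has generic rank $1$, so immediately $v_n-w_n=1$. To match determinants, I would combine the two short exact sequences to obtain $\det(V_n)\otimes\det(W_n)^{-1}=\det(\mathcal{I}_n)=\det(\mathcal{O}_{\mathcal{Z}_n})^{-1}$; since the Knudsen--Mumford determinantal line bundle of a coherent sheaf supported in codimension $\ge 2$ is canonically trivial, one gets $\det(V_n)=\det(W_n)$ automatically, with no further adjustment needed.

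The only step with genuine content is the Cohen--Macaulay deduction of the projective-dimension bound, and this is routine once Fogarty's smoothness of $S^{[n]}$ is in hand; the rest is formal. The subtlety to watch is that the surjection $V_n\twoheadrightarrow\mathcal{I}_n$ must be chosen globally over $S^{[n]}\times S$ (not only pointwise), which is exactly where quasi-projectivity of $S$ is essential.
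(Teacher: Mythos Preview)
Your argument is correct and is the standard route to this statement. Note, however, that the paper does not actually prove this proposition: it is quoted verbatim as Proposition~2.11 of \cite{neguct2018hecke}, so there is no ``paper's own proof'' here to compare against. The argument in Negu\c{t}'s paper follows the same outline you give: flatness of $\mathcal{Z}_n$ over $S^{[n]}$ together with Fogarty's smoothness theorem forces $\mathcal{O}_{\mathcal{Z}_n}$ to be Cohen--Macaulay of codimension~$2$ in $S^{[n]}\times S$, Auslander--Buchsbaum then bounds the projective dimension, and the determinant equality drops out because the determinant line bundle of a coherent sheaf supported in codimension~$\ge 2$ on a smooth variety is trivial (restrict to the complement of the support and use that $\mathrm{Pic}$ is unchanged by removing a codimension~$\ge 2$ locus). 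Your write-up is complete as it stands.
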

\begin{definition}
  The nested Hilbert scheme $S^{[n,n+1]}$ is defined to be
$$S^{[n,n+1]}:=\{(\mathcal{I}_n,\mathcal{I}_{n+1},x)\in S^{[n]}\times S^{[n+1]}\times S|\mathcal{I}_{n+1}\subset \mathcal{I}_n, \mathcal{I}_n/\mathcal{I}_{n+1}=k_x\}$$
with natural projection maps

\begin{minipage}{0.5\linewidth}
  \begin{equation}
    \begin{tikzcd}
      & S^{[n,n+1]} \ar{rd}{p_n^{-}} \ar[ld,swap, "p_n^+"] \ar{d}{\pi_n} & \\
      S^{[n]} & S & S^{[n+1]}
    \end{tikzcd}
  \end{equation}
\end{minipage}
\begin{minipage}{0.5\linewidth}
  \begin{equation}
    \begin{tikzcd}
      & (\mathcal{I}_n,\mathcal{I}_{n+1},x) \ar{rd}{p_n^{-}} \ar[ld,swap, "p_n^+"] \ar{d}{\pi_n} & \\
      \mathcal{I}_n & x & \mathcal{I}_{n+1}
    \end{tikzcd}
  \end{equation}
\end{minipage}
and let
\begin{align*}
  p_n:=(p_{n}^+,\pi_n): S^{[n,n+1]}\to S^{[n]}\times S.
\end{align*}

The tautological line bundle $\mathcal{L}$ is the line bundle whose fibers are  $\mathcal{I}_{n}/\mathcal{I}_{n+1}$.
\end{definition}

\begin{definition}
  Define the nested Hilbert scheme $S^{[n-1,n,n+1]}$ by the Cartesian diagram:
  \begin{equation}
    \label{diagram4.1}
    \begin{tikzcd}
      S^{[n-1,n,n+1]} \ar{r}{q_n}\ar{d} & S^{[n,n+1]} \ar{d}{p_{n}} \\
      S^{[n-1,n]} \ar{r}{(p_{n-1}^{-},\pi_{n-1})} & S^{[n]}\times S
    \end{tikzcd}
  \end{equation}
which consists of
$$\{(\mathcal{I}_{n-1},\mathcal{I}_n,\mathcal{I}_{n+1},x)\in S^{[n-1]}\times S^{[n]}\times S^{[n+1]}\times S|\mathcal{I}_{n-1}/\mathcal{I}_{n}=k_x,\mathcal{I}_{n}/\mathcal{I}_{n+1}=k_x\}.$$
There are two tautological line bundles $\mathcal{L}_1,\mathcal{L}_{2}$ on $S^{[n-1,n,n+1]}$ whose fibers are $\mathcal{I}_{n}/\mathcal{I}_{n+1},\mathcal{I}_{n-1}/\mathcal{I}_n$ respectively.
We denote
$$q_n:S^{[n-1,n,n+1]}\to S^{[n,n+1]}$$
 the projection morphism.
\end{definition}
We define
$$\mathcal{M}:=\bigsqcup_{n=0}^\infty S^{[n]}$$
to be the Hilbert schemes of points on $S$. 

\begin{example}

  \label{example:2.5}
  Let $\Delta_{S}:S\to S\times S$ be the diagonal embedding and $\mathcal{I}_{\Delta_S}$ be the ideal sheaf of the diagonal. Then
  \begin{align*}
    S^{[1,2]}=Bl_{\Delta_S}(S\times S)=\mathbb{P}_{S\times S}(\mathcal{I}_{\Delta_{S}}) \qquad  S^{[2]}=Bl_{\Delta_{S}}(S\times S)/\mathbb{Z}_2
  \end{align*}
    where the $\mathbb{Z}_2$ action on $Bl_{S\times S}(\mathcal{I}_{\Delta_{S}})$ is induced by the $\mathbb{Z}_2$ action
  $$i:S\times S\to S\times S \quad i(x,y)=(y,x).$$
  By \cite{MR3484148}, the projection morphism 
  $$(p_{2}^-,\pi_2):S^{[1,2]}\to S^{[2]}\times S\quad (\mathcal{I}_1,\mathcal{I}_2,x)\to (\mathcal{I}_2,x)$$
  has image $\mathcal{Z}_2$ and is an isomorphism when restricting to $\mathcal{Z}_2$. By \cref{diagram4.1}, $S^{[1,2,3]}\cong p_2^{-1}(\mathcal{Z}_2)$.
\end{example}

\subsection{The Quantum Toroidal algebra $U_{q_1,q_2}(\ddot{gl}_1)$}
We follow the notation of \cite{neguct2018hecke} for the definition of the quantum toroidal algebra $U_{q_1,q_2}(\ddot{gl}_1)$. Given two formal parameters $q_1$ and $q_2$, let $q=q_1q_2$.  Let
  $$\mathbb{K}=\mathbb{Z}[q_1^{\pm 1},q_2^{\pm 1}]^{Sym}_{([1],[2],[3],\cdots)}$$
  where $Sym$ refers to polynomials which are symmetric in $q_1$ and $q_2$.
 
 \begin{definition}
   \label{def:2.1}
  The quantum toroidal algebra $U_{q_1,q_2}(\ddot{gl_1})$ is the $\mathbb{K}$-algebra with generators:
  $$\{E_{k},F_{k},H_{l}^\pm\}_{k\in \mathbb{Z},l\in \mathbb{N}}$$
  modulo the following relations:
  \begin{multline}
    \label{2.1}
    (z-wq_1)(z-wq_2)(z-\frac{w}{q})E(z)E(w)= \\
    =(z-\frac{w}{q_1})(z-\frac{w}{q_2})(z-wq)E(w)E(z)
  \end{multline}
  \begin{multline}
    \label{2.2}
    (z-wq_1)(z-wq_2)(z-\frac{w}{q})E(z)H^{\pm}(w)= \\
    =(z-\frac{w}{q_1})(z-\frac{w}{q_2})(z-wq)H^{\pm}(w)E(z)
  \end{multline}
  \begin{equation}
    \label{2.3}
     [[E_{k+1},E_{k-1}],E_k]=0 \quad \forall k\in \mathbb{Z}
   \end{equation}
   together with the opposite relations for $F(z)$ instead of $E(z)$, as well as:
   \begin{equation}
     \label{2.4}
     [E(z),F(w)]=\delta(\frac{z}{w})(1-q_1)(1-q_2)(\frac{H^+(z)-H^{-}(w)}{1-q}) 
   \end{equation}
   where
   \begin{equation}
     E(z)=\sum_{k\in \mathbb{Z}}\frac{E_k}{z^k}, \quad F(z)=\sum_{k\in \mathbb{Z}}\frac{F_k}{z^k}, \quad H^{\pm}(z)=\sum_{l\in \mathbb{N}\cup \{0\}}\frac{H_l^\pm}{z^{\pm l}}
   \end{equation}
   where  $$\delta(z)=\sum_{n\in \mathbb{Z}}z^n\in \mathbb{Q}\{\{z\}\}.$$ We will set $H_0^+=q$ and $H_0^{-}=1$.
 \end{definition}
 
\subsection{The Quantum Toroidal Algebra Action on the $K$-theory of Hilbert Schemes} We will write $S_1,S_2$ for two copies of $S$, in order to emphasize  the  factors of $S\times S$.
\begin{definition}[Definition 4.10 and Definition 4.11 of \cite{neguct2018hecke}]
  \label{def:4.10}
     For any group homomorphisms $x,y:K(\mathcal{M})\to K(\mathcal{M}\times S)$, we define:
     $$xy|_{\Delta_S}=\{K(\mathcal{M})\xrightarrow{y} K(\mathcal{M}\times S)\xrightarrow{x\times Id_{S}}K(\mathcal{M}\times S\times S)\xrightarrow{Id_{\mathcal{M}}\times \Delta_S^*}K(\mathcal{M}\times S)\}$$
     where $\Delta_S:S\to S\times S$ is the diagonal embedding. Also define:
\begin{multline*}
  [x,y]=\{K(\mathcal{M})\xrightarrow{y} K(\mathcal{M}\times S_2)\xrightarrow{s\times Id_{S_2}}K(\mathcal{M}\times S_1\times S_2)\} \\
  -\{K(\mathcal{M})\xrightarrow{x} K(\mathcal{M}\times S_1)\xrightarrow{y\times Id_{S_1}}K(\mathcal{M}\times S_1\times S_2)\}
\end{multline*}
We define
$$[x,y]_{red}=z$$
for a group homomorphism $z:K(\mathcal{M})\to K(\mathcal{M\times S})$ if
  \begin{equation*}
    [x,y]=\Delta_{S*}(z).
  \end{equation*}
  The definition is unambiguous, since $\Delta_{S*}:K(S)\to K(S\times S)$ is injective, and so $z$ is unique.
\end{definition}

\begin{definition}
  For a two term complex of locally free sheaves
  $$U:=\{W\xrightarrow{\mathfrak{u}}V\}$$
  we define
  $$[\wedge^k(U)]:=\sum_{i=0}^k(-1)^i[S^iW][\wedge^{k-i}V] \quad [S^k(U)]:=\sum_{i=0}^k(-1)^i[\wedge^iW][S^{k-i}V]$$
  as elements in $K(X)$ and define $det(U):=\frac{det(V)}{det(W)}$. We define $U^\vee$ to be the  two term complex
  $$\{V^\vee\xrightarrow{\mathfrak{u}^\vee}W^\vee\}.$$
\end{definition}
\begin{definition}
  \label{4eq:lem2.11}
  We define $h_0^{+}:=[\omega_S]$, $h_{0}^{-}:=1$ and when $m>0$
  \begin{align}
    \label{4eq:total}
      h_{m}^{+}:=(1-\omega_S)\sum_{j=0}^{m-1}[\omega_S^{-j}]\sum_{i=0}^{j}(-1)^{i}[S^{m-i}\mathcal{I}_n][\wedge^{i}\mathcal{I}_n] \\
       h_m^{-}:=(1-\omega_S)\sum_{j=0}^{m-1}(-1)^{j}[\omega_S^{j}]\sum_{i=0}^j(-1)^i[\wedge^{m-i}\mathcal{I}_n^\vee][S^i\mathcal{I}_n^\vee]
  \end{align}
  as elements in $K(S^{[n]}\times S).$ Here we abuse the notation to denote
  $$\mathcal{I}_n:=\{W_n\xrightarrow{s}V_n\}$$
  in the short exact sequence \cref{eq:resolve}.
\end{definition}
\begin{remark}
  \cref{4eq:lem2.11} is equivalent to the definition of $h_{m}^{\pm}$ in \cite{negut2017shuffle}. We will prove it in \cref{sec:b}.
\end{remark}

\begin{theorem}[Theorem 1.2 of \cite{negut2017shuffle}]
  \label{2.17}
  Let $T^*S$ be the cotangent bundle of $S$ and $\omega_S$ be the canonical bundle of $S$. The morphism:
$$q_1+q_2\to [T^*S] \quad q=q_1q_2\to [\omega_S]$$
induces a homomorphism:
$$\mathbb{K}\to K_S.$$
  Regarding
  \begin{align*}
  \tilde{e}_i:=[\mathcal{L}^i\mathcal{O}_{S^{[n,n+1]}}]\in K(S^{[n]}\times S^{[n+1]}\times S),\\
  \tilde{f}_i:=-[\mathcal{L}^{i-1}\mathcal{O}_{S^{[n,n+1]}}]\in K(S^{[n+1]}\times S^{[n]}\times S), \\
  \tilde{h}_{i}^\pm:=[h_{i}^\pm\mathcal{O}_{S^{[n]}\times S}]\in  K(S^{[n]}\times S^{[n]}\times S)
\end{align*}
 as operators $K(\mathcal{M})\to K(\mathcal{M}\times S)$ through the $K$-theoretic correspondences, where we regard $S^{[n]}\times S$ as a closed subscheme of $S^{[n]}\times S^{[n]}\times S$ through the diagonal embedding, then there exists a unique $\mathbb{K}$-homomorphism
  $$\Phi:U_{q_1,q_2}(\ddot{gl}_1)\to Hom(K_{\mathcal{M}},K_{\mathcal{M}\times S})$$
  such that
  \begin{enumerate}
  \item $$\Phi(E_i)=\tilde{e}_i, \quad \Phi(F_i)=\tilde{f}_i, \quad \Phi(H_i^{\pm})=\tilde{h}_i^\pm$$
  \item For all $x,y\in  U_{q_1,q_2}(\ddot{gl}_1)$, we have
    \begin{align*}
    \Phi(xy)=\Phi(x)\Phi(y)|_{\Delta_S} \\
    [\Phi(x),\Phi(y)]|_{red}=\Phi(\frac{[x,y]}{(1-q_1)(1-q_2)}).
    \end{align*}
    The right-hand side is well defined due to the fact that all commutators in $U_{q_1,q_2}(\ddot{gl}_1)$ are multiples of $(1-q_1)(1-q_2)$(see Theorem 2.4 of \cite{neguct2017w}).
  \end{enumerate}
\end{theorem}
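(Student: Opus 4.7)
The plan is to verify that the geometric operators $\tilde e_i, \tilde f_i, \tilde h_l^\pm$ satisfy the defining relations \cref{2.1}--\cref{2.4} of $U_{q_1,q_2}(\ddot{gl}_1)$ under the operations $|_{\Delta_S}$ and $|_{\mathrm{red}}$. Uniqueness of $\Phi$ is automatic, since $\{E_k,F_k,H_l^\pm\}$ generates $U_{q_1,q_2}(\ddot{gl}_1)$ over $\mathbb{K}$, so only existence needs to be checked.

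For the $E$-$E$ relation \cref{2.1} (and the parallel $F$-$F$ relation), I would compute $\tilde e_k \tilde e_l|_{\Delta_S}$ and $\tilde e_l \tilde e_k|_{\Delta_S}$ as derived pushforwards from $S^{[n-1,n,n+1]}$ of line bundles built from $\mathcal{L}_1$ and $\mathcal{L}_2$, and from the analogous flag in which the two points are added in the opposite order. Both are dominated by a common fourfold (the quadruple moduli space $\mathfrak{Y}$ of the introduction), on which \cref{2.1} reduces to an explicit K-theoretic identity of symmetric rational functions in $\mathcal{L}_1,\mathcal{L}_2$; the factors $z-wq_1, z-wq_2, z-w/q$ on the two sides track the normal bundle of $\mathfrak{Y}$ inside the product. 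The Serre relation \cref{2.3} is obtained analogously on a larger triple nested Hilbert scheme. For the $E$-$H$ relation \cref{2.2}, since $\tilde h_l^\pm$ is supported on the diagonal $S^{[n]}\times S\hookrightarrow S^{[n]}\times S^{[n]}\times S$, both compositions reduce to pullback-pushforward along $S^{[n,n+1]}$ of tensor products of universal sheaf classes, and one matches universal sheaves on $S^{[n]}$ and $S^{[n+1]}$ through the resolution \cref{eq:resolve}.

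The main obstacle, and the place where the precise shape of $h_m^\pm$ in \cref{4eq:total} is forced on us, is the $E$-$F$ commutation \cref{2.4}. The compositions $\tilde e_m \tilde f_r$ and $\tilde f_r \tilde e_m$ are derived pushforwards from two triple moduli spaces (the $\mathfrak{Z}_+$ and $\mathfrak{Z}_-$ introduced above), which differ exactly along the locus $\mathcal{I}_n = \mathcal{I}_n'$, i.e.\ along the diagonal $\Delta_S$. The reduced commutator $[\tilde e_m,\tilde f_r]_{\mathrm{red}}$ is therefore a residue of this difference along $\Delta_S$, computed by a Koszul resolution of the normal bundle of $S^{[n]}\times \Delta_S$ inside $S^{[n]}\times S^{[n]}\times S_1\times S_2$. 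Matching this residue, expanded as a $\delta$-series in $z/w$, to $(1-q_1)(1-q_2)\frac{H^+(z)-H^-(w)}{1-q}$ yields exactly the alternating combination of $[S^i\mathcal{I}_n]$ and $[\wedge^i\mathcal{I}_n]$ in \cref{4eq:total}, with the prefactor $1-\omega_S$ arising from the self-intersection class $[\Delta_S]\cdot[\Delta_S]=[\omega_S]$. This residue computation is the technical heart of the argument in \cite{negut2017shuffle}, and I would follow that proof; its derived-category analogue is precisely the content of the present paper.
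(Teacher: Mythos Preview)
The paper does not prove this statement at all: \cref{2.17} is stated as ``Theorem 1.2 of \cite{negut2017shuffle}'' and is included in \cref{sec:2} purely as background review, with no proof given. So there is no proof in the paper to compare your proposal against.

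Your outline is a reasonable high-level sketch of the argument in \cite{negut2017shuffle} (and you acknowledge as much in your final paragraph). If the intent was to supply a self-contained proof here, note that the $E$--$E$ and Serre relations are actually handled in \cite{neguct2018hecke} rather than \cite{negut2017shuffle}, and the $E$--$F$ residue computation in \cite{negut2017shuffle} proceeds via the shuffle-algebra presentation rather than directly on the triple moduli spaces $\mathfrak{Z}_\pm$ you invoke; the geometric approach via $\mathfrak{Z}_\pm$ is precisely what the present paper develops at the derived level, and at the Grothendieck-group level it would constitute an alternative proof rather than a summary of the original one.
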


\section{Nested Hilbert Schemes and $h_{m,k}^{\pm}$}
\label{sec:3}
Given any scheme $X$, let $D^b(X)$ be the bounded derived category of coherent sheaves on $X$. In this section, we will define objects
$$h_{m,k}^{\pm}\in D^b(S^{[n]}\times S)$$
such that at the level of Grothendieck groups
\begin{equation}
  \label{eq:gro}
  \begin{cases}
    (1-[\omega_S])\sum\limits_{k=0}^{m-1}[h_{m,k}^+]=h_m^{+} & \qquad m>0 \\
      (1-[\omega_S])\sum\limits_{k=m+1}^{0}[h_{m,k}^-]=h_{-m}^{-} & \qquad m<0.
  \end{cases}
\end{equation}

 \begin{definition}
    \label{hmk}
    For two integers $k,m$ such that $m>k\geq 0$, we define $h_{m,k}^{+}\in D^{b}(S^{[n]}\times S)$ by
    \begin{equation}
      h_{m,k}^+:=
      \begin{cases}
        R(p_n\circ q_n)_*(\mathcal{L}_1^{m-1-k}\mathcal{L}_2^k)[1] & k>0 \\
        Rp_{n*}(\mathcal{L}^{m})[2] & k=0
      \end{cases}
    \end{equation}
    and if $m<k \leq 0$, we define $h_{m,k}^-\in  D^{b}(S^{[n]}\times S)$ by
    \begin{equation}
      h_{m,k}^{-}:=
      \begin{cases}
        R(p_n\circ q_n)_*(\mathcal{L}_1^{m-1-k}\mathcal{L}_2^{k})[1] & k<0 \\
        Rp_{n*}(\mathcal{L}^{-m-1})[1] & k=0.
      \end{cases}
    \end{equation}
  \end{definition}
 \begin{theorem}
    \label{thm:4.9} At the level of Grothendieck groups,
    \begin{align*}
  [h_{m,k}^+]=[\omega_S^{-k}]\sum_{i=0}^{k}(-1)^{i}[S^{m-i}\mathcal{I}_n][\wedge^{i}\mathcal{I}_n] \\
  [h_{m,k}^-]=(-1)^{k}[\omega_S^{k}]\sum_{i=0}^k(-1)^i[\wedge^{m-i}\mathcal{I}_n^\vee][S^i\mathcal{I}_n^\vee] 
\end{align*}
and \cref{eq:gro} holds.
     \end{theorem}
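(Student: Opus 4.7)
The proof reduces to computing the K-classes $[h_{m,k}^\pm]$ in $K(S^{[n]}\times S)$; once these are in hand, \cref{eq:gro} is a term-by-term rewriting of the defining formula \cref{4eq:total}, with no further work required. The essential tools are the two-term resolution $0\to W_n\to V_n\to\mathcal{I}_n\to 0$ of \cref{1lm:2.2}, a Koszul resolution inside a relative projective bundle, and derived base change along the Cartesian square \cref{diagram4.1}.

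For the base case $k=0$, the tautological exact sequence $0\to\mathcal{I}_{n+1}\to\mathcal{I}_n\to\mathcal{O}_\Gamma\otimes\mathcal{L}\to 0$ on $S^{[n,n+1]}\times S$ (with $\Gamma$ the graph of $\pi_n$) restricts along $\Gamma$ to a surjection $p_n^*\mathcal{I}_n\twoheadrightarrow\mathcal{L}$ on $S^{[n,n+1]}$. Composing with $V_n\twoheadrightarrow\mathcal{I}_n$, this realizes $S^{[n,n+1]}$ as the zero locus inside $\mathbb{P}_{S^{[n]}\times S}(V_n)$ of a regular section of $W_n^\vee\otimes\mathcal{O}(1)$ (the codimension $w_n=v_n-1$ matches the rank of $W_n^\vee(1)$), with $\mathcal{L}=\mathcal{O}(1)|_{S^{[n,n+1]}}$. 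The Koszul complex of this section resolves $\mathcal{O}_{S^{[n,n+1]}}$; twisting by $\mathcal{O}(m)$ and applying Grothendieck's projective-bundle formula $R\pi_*\mathcal{O}(j)=S^jV_n$ for $j\geq 0$ (together with the vanishing for $-v_n<j<0$) yields
\[
[Rp_{n*}\mathcal{L}^m]\;=\;\sum_{i}(-1)^i[\wedge^iW_n][S^{m-i}V_n]\;=\;[S^m\mathcal{I}_n],
\]
which is $[h_{m,0}^+]$ (the $[2]$ shift is cosmetic at the level of Grothendieck groups).

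For $k\geq 1$ I invoke \cref{diagram4.1}. Writing $\mathcal{L}_1=q_n^*\mathcal{L}$ and $\mathcal{L}_2=q_n^{\prime*}\mathcal{L}'$, where $\mathcal{L}'$ is the analogous tautological bundle on $S^{[n-1,n]}$, the projection formula and derived base change split $[h_{m,k}^+]$ into a convolution of the already computed $[S^{m-1-k}\mathcal{I}_n]$ with the birational pushforward $[R(p_{n-1}^-,\pi_{n-1})_*\mathcal{L}'^k]$. The latter is evaluated via a second Koszul argument attached to the codimension-two embedding $\mathcal{Z}_n\hookrightarrow S^{[n]}\times S$, whose conormal bundle satisfies $\det N^\vee_{\mathcal{Z}_n/S^{[n]}\times S}=\omega_S|_{\mathcal{Z}_n}$: $k$ iterations of the tautological quotient extract the factor $[\omega_S^{-k}]$, while the alternating Koszul structure produces the sum $\sum_{i=0}^k(-1)^i[S^{m-i}\mathcal{I}_n][\wedge^i\mathcal{I}_n]$. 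The negative case $[h_{m,k}^-]$ is handled symmetrically using the dual two-term complex $\mathcal{I}_n^\vee=\{V_n^\vee\to W_n^\vee\}$.

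The hardest point will be the rigorous extraction of the $\omega_S^{-k}$ twist in the second step. Since $(p_{n-1}^-,\pi_{n-1})$ is birational but not flat --- its fibres jump over the singular locus of $\mathcal{Z}_n$ --- naive base change along \cref{diagram4.1} can miss Tor-terms; these must be tracked either by consistently working in the derived category or by an explicit Koszul resolution capturing the conormal geometry of $\mathcal{Z}_n$. Verifying that the resulting derived contribution matches exactly the $\omega_S^{-k}$-weighted Koszul sum is where the real work lies; once the formula for $[h_{m,k}^\pm]$ is established, \cref{eq:gro} follows immediately by substituting into $(1-[\omega_S])\sum_{k}[h_{m,k}^{\pm}]$ and comparing with \cref{4eq:total} term-by-term.
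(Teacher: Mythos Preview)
Your $k=0$ step agrees with the paper: both use that $S^{[n,n+1]}$ is the projectivization of $\{W_n\to V_n\}$ over $S^{[n]}\times S$ and push $\mathcal{L}^m$ down via the Koszul complex (this is \cref{L1}).

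For $k\geq 1$, however, your plan has a genuine gap. The Cartesian square \cref{diagram4.1} is \emph{not} Tor--independent: all four corners are smooth, $\dim S^{[n-1,n]}=2n$, $\dim S^{[n,n+1]}=\dim(S^{[n]}\times S)=2n+2$, so the expected dimension of the fibre product is $2n$, whereas $S^{[n-1,n,n+1]}$ has dimension $2n+1$ (\cref{1pr:2.4}). Consequently $Rq_{n*}\mathcal{L}_2^k\not\cong Lp_n^*R(p_{n-1}^-,\pi_{n-1})_*\mathcal{L}'^k$, and your ``convolution'' factorisation $[h_{m,k}^+]=\pm[S^{m-1-k}\mathcal{I}_n]\cdot[R(p_{n-1}^-,\pi_{n-1})_*\mathcal{L}'^k]$ cannot hold. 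Indeed the target expression $[\omega_S^{-k}]\sum_{i=0}^k(-1)^i[S^{m-i}\mathcal{I}_n][\wedge^i\mathcal{I}_n]$ is visibly \emph{not} of the form $[S^{m-1-k}\mathcal{I}_n]\cdot(\text{class independent of }m)$, so no amount of tracking Tor--terms in the second factor alone will repair this. Your caveat in the last paragraph correctly locates the difficulty but does not resolve it; the ``second Koszul argument attached to $\mathcal{Z}_n$'' is not a viable route because $\mathcal{Z}_n$ is singular and $(p_{n-1}^-,\pi_{n-1})$ is not an embedding.

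The paper avoids base change along \cref{diagram4.1} entirely. Instead it uses \cref{1pr:2.4}: the map $q_n:S^{[n-1,n,n+1]}\to S^{[n,n+1]}$ is itself the projectivization of the two--term complex $\{\overline{V_n}^\vee\otimes\omega_S\to p_n^*W_n^\vee\otimes\omega_S\}$, with $\mathcal{L}_2=\mathcal{O}(-1)$. Applying the Categorical Projection Lemma (\cref{prop:p}) to this projectivization computes $Rq_{n*}\mathcal{L}_2^k$ directly (\cref{lem:4.5}); the $\omega_S$ twist in the projectivization data is exactly what produces the factor $\omega_S^{-k}$. Composing with $Rp_{n*}$ via \cref{L1} gives \cref{cor}, from which the claimed $K$--class follows. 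I recommend you redo the $k\geq 1$ step along these lines.
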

\cref{thm:4.9} will be proved later in this section.

\subsection{Projectivization and  A Categorical Projection Lemma} In this paper, for any complex
$$\{\cdots \to C_{-1}\to C_0\to 0\}$$
we will assume that $C_0$ has cohomological degree $0$ unless explicitly pointing out the cohomological degree.
\begin{definition}
  Given a two term complex of locally free sheaves $U:=\{W\xrightarrow{s}V\}$, we define the symmetric product and the wedge product complexes: 
  \begin{align*}
  S^{k}(U):=\{\wedge^{k}W\cdots \to \cdots \to W\otimes S^{k-1}V\to S^k(V) \}\\
  \wedge^{k}(U):=\{S^{k}W\to \cdots \to \wedge^{k-1}(V)\otimes W\to \wedge^k(V)\}
  \end{align*}
  and $S^{k}(U)=\wedge^{k}(U)=0$ when $k<0$.
\end{definition}
\begin{definition}
  \label{projectivization}
  Let
  $$U:=\{W\xrightarrow{s} V\}$$
  be a two term complex of locally free sheaves over a scheme $X$ such that $W$ has rank $w$ and $V$ has rank $v$. Let $Z\subset \mathbb{P}_X(V)$ be the closed subscheme such that $\mathcal{O}_Z$ is the cokernel of  the composition of morphisms $$\rho^*W\otimes \mathcal{O}_{\mathbb{P}_X(V)}(-1)\xrightarrow{\rho^*(s)}\rho^*V\otimes \mathcal{O}_{\mathbb{P}_X(V)}(-1)\xrightarrow{taut} \mathcal{O}_{\mathbb{P}_X(V)}$$
where $\rho:\mathbb{P}_X(V)\to X$ is the projection morphism. We define $Z$ to be the projectivization of $U$ over $X$, denoted by
$$Z=\mathbb{P}_X(U)$$
if  $\mathcal{O}_Z$ is resolved by the Koszul complex:
$$0 \to \wedge^w\rho^*(W)\otimes \mathcal{O}_{\mathbb{P}_X(V)}(-w) \to \cdots \to \rho^*W\otimes \mathcal{O}_{\mathbb{P}_X(V)}(-1)\to \mathcal{O}_{\mathbb{P}_X(V)}\to \mathcal{O}_Z\to 0.$$
\end{definition}
When $Z$ is a projectivization of $U$ over $X$, we have a categorical projection lemma for  $R\rho_{*}(\mathcal{O}_Z(k))$:
\begin{lemma}[Categorical Projection Lemma]
  \label{prop:p}
 If $Z$ is the projectivization of $U$ over $X$ in \cref{projectivization}, then the tensor contraction
\begin{equation}
  \label{spec}
det(U)^{-1}\wedge^{w-v-k}W^\vee=\wedge^v(V^\vee)\wedge^{k+v}(W)\to \wedge^k(W)
\end{equation}
induces a morphism of complexes 
\begin{equation}
\label{cone}
  det(U)^{-1}\wedge^{w-v-k}(U^\vee)[-k] \to S^k(U)
\end{equation}
and $R\rho_*(\mathcal{O}_Z(k))$ is quasi-isomorphic to its cone.
\end{lemma}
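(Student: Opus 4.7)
The plan is to compute $R\rho_*\mathcal{O}_Z(k)$ directly from the twisted Koszul resolution of \cref{projectivization}. Tensoring that resolution by $\mathcal{O}(k)$ produces a complex $K^\bullet(k)$ with $K^{-j}(k)=\wedge^j \rho^*W\otimes \mathcal{O}(k-j)$, quasi-isomorphic to $\mathcal{O}_Z(k)$. By the projection formula, $R\rho_* K^{-j}(k)=\wedge^j W\otimes R\rho_*\mathcal{O}(k-j)$, which I evaluate using the standard projective bundle computation on $\rho:\mathbb{P}_X(V)\to X$: combining the relative Euler sequence with relative Serre duality ($\omega_\rho=\rho^*\det(V)\otimes \mathcal{O}(-v)$), the pushforward equals $S^{k-j}V$ sitting in degree $0$ when $k-j\geq 0$, vanishes in the ``middle'' range $-v<k-j<0$, and equals $S^{j-k-v}V^\vee\otimes \det(V)^{-1}$ sitting in degree $v-1$ when $k-j\leq -v$.

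First I would run the first hypercohomology spectral sequence $E_1^{p,q}=R^q\rho_* K^p(k) \Rightarrow R^{p+q}\rho_* \mathcal{O}_Z(k)$. The $E_1$-page is supported on exactly two horizontal rows. The bottom row ($q=0$) contains the terms $\wedge^j W\otimes S^{k-j}V$ for $j=0,\dots,k$, which with their Koszul differentials assemble into the complex $S^k(U)$. The top row ($q=v-1$) contains $\wedge^j W\otimes S^{j-k-v}V^\vee\otimes \det(V)^{-1}$ for $j=k+v,\dots,w$; applying the duality $\wedge^j W\cong \wedge^{w-j}W^\vee\otimes \det W$ and reindexing by $i=j-k-v$, these terms are identified with those of $\det(U)^{-1}\wedge^{w-v-k}(U^\vee)$ in an appropriate cohomological shift, and the horizontal Koszul differentials agree (up to sign) with those of $\wedge^{w-v-k}(U^\vee)$ by duality of the Koszul construction with respect to $s$. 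Since the two rows are separated by $v$, the only possibly nonzero higher differential is $d_v:E_v^{p,v-1}\to E_v^{p+v,0}$; the spectral sequence therefore collapses after this page, exhibiting $R\rho_*\mathcal{O}_Z(k)$ as the cone of a morphism from the shifted top row to the bottom row.

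Next I would identify $d_v$ with the map in \cref{cone}. At the corner $j=k+v$, the source $\wedge^{k+v}W\otimes \det(V)^{-1}$ maps to $\wedge^k W$, and under the $W\leftrightarrow W^\vee$ duality this is precisely the contraction $\det(U)^{-1}\wedge^{w-v-k}W^\vee=\wedge^v V^\vee\otimes \wedge^{k+v}W\to \wedge^k W$ from \cref{spec}, whose construction through iterated applications of $s:W\to V$ is exactly what the $v$-fold connecting homomorphisms in the projective bundle cohomology encode. The extension of this corner map to a chain map $\det(U)^{-1}\wedge^{w-v-k}(U^\vee)[-k]\to S^k(U)$ is then forced by the compatibility of $d_v$ with the horizontal differentials of both rows, i.e.\ by naturality of the symmetric and wedge powers defined in \cref{projectivization}.

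The main obstacle is essentially bookkeeping: verifying the shift $[-k]$, tracking Koszul signs, and confirming that the top-row differential really matches the differential of $\wedge^{w-v-k}(U^\vee)$ under $\wedge^j W\cong \wedge^{w-j}W^\vee\otimes \det W$. Once the corner identification with \cref{spec} is in hand, the Koszul naturality uniquely determines the extension to a chain map, and the collapse of the spectral sequence immediately realizes $R\rho_*\mathcal{O}_Z(k)$ as its cone.
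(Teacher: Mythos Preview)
Your proposal is correct and follows essentially the same route as the paper: both arguments rest on the twisted Koszul resolution of $\mathcal{O}_Z(k)$ together with the standard computation of $R\rho_*\mathcal{O}(j)$ on a projective bundle, and both exploit that the pushforwards are concentrated in the two degrees $0$ and $v-1$. The only cosmetic difference is that the paper bypasses the hypercohomology spectral sequence by taking the stupid truncation of the Koszul complex at $j=k+v$, writing $\mathcal{O}_Z(k)$ as the cone of an explicit map $F_0\to F_1$ of complexes whose pushforwards are already $\det(U)^{-1}\wedge^{w-v-k}(U^\vee)[-k]$ and $S^k(U)$; this makes the connecting morphism appear directly rather than as a $d_v$-differential, but the content is the same.
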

\begin{proof}
   $\mathcal{O}_Z(k)$ is quasi-isomorphic to the complex
   $$\{\cdots \to \wedge^j\rho^*W\otimes \mathcal{O}_{\mathbb{P}_X(V)}(-j+k) \to \cdots \to \rho^*W\otimes \mathcal{O}_{\mathbb{P}_X(V)}(k-1)\to \mathcal{O}_{\mathbb{P}_X(V)}(k)\to 0\}.$$
   Consider the following two complexes
   \begin{align*}
     F_0=\{\cdots\to  \wedge^{k+v+1}\rho^*W\otimes \mathcal{O}_{\mathbb{P}_X(V)}(-v-1)\to \wedge^{k+v}\rho^*W\otimes\mathcal{O}_{\mathbb{P}_X(V)}(-v)\}[-v-k+1] \\
     F_1=\{\wedge^{k+v-1}\rho^*W\otimes\mathcal{O}_{\mathbb{P}_X(V)}(-v+1)\cdots\to  \rho^*W\otimes \mathcal{O}_{\mathbb{P}_X(V)}(k-1)\to \mathcal{O}_{\mathbb{P}_X(V)}(k)\}.
   \end{align*}
   Then the morphism $\wedge^{k+v}\rho^*W\otimes\mathcal{O}_{\mathbb{P}_X(V)}(-v)\to \wedge^{k+v-1}\rho^*W\otimes\mathcal{O}_{\mathbb{P}_X(V)}(-v+1)$ induces a morphism of $F_0\to F_1$ with the cone quasi-isomorphic to   $\mathcal{O}_{Z}(k)$.

 By Exercise \RN{3}.8.4 of Hartshorne \cite{hartshorne2013algebraic}, 
  \begin{equation}
    \label{1eq:2.4}
    R\rho_*(\mathcal{O}(j))=
    \begin{cases}
      S^{j}(V) & j\geq 0 \\
      0 & -v<j<0 \\
      detV^{-1}\otimes S^{-j-v}V^\vee[v-1] & j\leq -v.
    \end{cases}
  \end{equation}
  and thus
  $$R\rho_*F_0\cong det(U)^{-1}\wedge^{w-v-k}(U^\vee)[-k], \quad R\rho_*F_1\cong S^k(U).$$
  Hence $R\rho_*(\mathcal{O}_Z(k))$ is quasi-isomorphic to the cone
  $$det(U)^{-1}\wedge^{w-v-k}(U^\vee)[-k] \to S^k(U).$$
\end{proof}

\subsection{Nested Hilbert Schemes as Projectivization}
Recall the short exact sequence \cref{eq:resolve}:
$$0\to W_n\xrightarrow{s_n}V_n\to \mathcal{I}_n\to 0.$$
Nested Hilbert schemes can be realized as projectivizations, as in the following Propositions:

\begin{proposition}[Proposition 2.2 of \cite{ES98}]
  \label{1pr:1}
    The nested Hilbert scheme $S^{[n,n+1]}$ is the blow up of $\mathcal{Z}_n$ in $S^{[n]}\times S$:
     $$S^{[n,n+1]}\cong\mathbb{P}_{S^{[n]}\times S}(\mathcal{I}_n)$$ and is smooth of dimension $2n+2$. Moreover, $S^{[n,n+1]}$ is the projectivization of
     $$W_n\xrightarrow{s} V_n$$
     over $S^{[n]}\times S$. The tautological line bundle $\mathcal{L}$ is the restriction of $\mathcal{O}_{\mathbb{P}_{S^{[n]}\times S}(V_{n})}(1)$ to $S^{[n,n+1]}$.
\end{proposition}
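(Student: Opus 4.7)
The plan is to verify the proposition in three stages: (i) identify $S^{[n,n+1]}$ functorially with the Grothendieck projectivization $\mathbb{P}_{S^{[n]} \times S}(\mathcal{I}_n)$ parameterizing $1$-dimensional quotients of $\mathcal{I}_n$; (ii) use the resolution $0 \to W_n \to V_n \to \mathcal{I}_n \to 0$ from \cref{1lm:2.2} to embed this projectivization into $\mathbb{P}(V_n)$ as the scheme-theoretic zero locus of the induced section $W_n \to \mathcal{O}_{\mathbb{P}(V_n)}(1)$, then check smoothness of dimension $2n+2$ and deduce that the Koszul complex is a resolution so we land in the setup of \cref{projectivization}; (iii) invoke the universal property of blow-ups to identify the same space with $\mathrm{Bl}_{\mathcal{Z}_n}(S^{[n]} \times S)$.

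For (i), I would work functor-of-points. A $T$-point of $S^{[n,n+1]}$ is a flat family $\mathcal{I}_{n+1} \subset \mathcal{I}_n$ of ideal sheaves on $T \times S$ together with a section $x : T \to S$ such that $\mathcal{I}_n / \mathcal{I}_{n+1} \cong \mathcal{O}_\Gamma$, where $\Gamma \subset T \times S$ is the graph of $x$. Restricting the surjection $\mathcal{I}_n \twoheadrightarrow \mathcal{O}_\Gamma$ along the closed embedding $\iota_x : T \hookrightarrow T \times S$ of the graph yields a $1$-dimensional quotient $\iota_x^* \mathcal{I}_n \twoheadrightarrow \mathcal{O}_T$; conversely a $1$-dimensional quotient $\iota_x^*\mathcal{I}_n \twoheadrightarrow L$ on $T$ recovers $\mathcal{I}_{n+1}$ as the kernel of the composite $\mathcal{I}_n \to \iota_{x*}\iota_x^*\mathcal{I}_n \to \iota_{x*} L$ (flatness of this kernel over $T$ is a local verification). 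For (ii), right-exactness of $\mathcal{H}om(-, L)$ applied to the resolution identifies surjections $\mathcal{I}_n \twoheadrightarrow L$ with surjections $V_n \twoheadrightarrow L$ that kill the composite $W_n \to V_n \to L$, so $\mathbb{P}(\mathcal{I}_n)$ sits in $\mathbb{P}(V_n)$ as the vanishing locus of the induced section $W_n \to \mathcal{O}_{\mathbb{P}(V_n)}(1)$; the tautological line $\mathcal{L}$, whose fiber at $(\mathcal{I}_n, \mathcal{I}_{n+1}, x)$ is the $1$-dimensional quotient $\mathcal{I}_n/\mathcal{I}_{n+1}$, is by construction the restriction of $\mathcal{O}_{\mathbb{P}(V_n)}(1)$.

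The smoothness part of (ii) requires computing the Zariski tangent space to $S^{[n,n+1]}$ at a closed point. I would use the standard deformation-theoretic exact sequence governing colength-$1$ flags of ideals in a smooth surface, involving $\mathrm{Hom}_S(\mathcal{I}_n, \mathcal{O}/\mathcal{I}_n)$ and the compatible deformations of $\mathcal{I}_{n+1}$, to show the tangent space has dimension $2n + 2$ uniformly. Combined with the expected dimension $\dim \mathbb{P}(V_n) - w_n = (2n + 1 + v_n) - w_n = 2n + 2$ (using $v_n - w_n = 1$ from \cref{1lm:2.2}), this forces the zero locus to be a smooth complete intersection in $\mathbb{P}(V_n)$, so the Koszul section $W_n \to \mathcal{O}_{\mathbb{P}(V_n)}(1)$ is regular and we land in the setup of \cref{projectivization}. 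For (iii), the morphism $S^{[n,n+1]} \to S^{[n]} \times S$ is an isomorphism off $\mathcal{Z}_n$ (where $\mathcal{I}_n$ is invertible) and pulls $\mathcal{I}_n$ back to the invertible ideal $\mathcal{L}$; the universal property of the blow-up then supplies a morphism $S^{[n,n+1]} \to \mathrm{Bl}_{\mathcal{Z}_n}(S^{[n]} \times S)$ over the base which is an isomorphism, since both sides are reduced, birational to $S^{[n]} \times S$, and carry the same relative $\mathcal{O}(1)$.

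The principal obstacle is the smoothness/dimension count: because $\mathcal{I}_n$ is not locally free on $S^{[n]} \times S$ --- it drops rank in codimension $2$ along $\mathcal{Z}_n$ --- the fiber dimensions of $\mathbb{P}(\mathcal{I}_n) \to S^{[n]} \times S$ jump along $\mathcal{Z}_n$, and one cannot naively conclude regularity of $W_n \to \mathcal{O}_{\mathbb{P}(V_n)}(1)$ from dimensional considerations on the ambient projective bundle. The cleanest workaround is to verify smoothness of $S^{[n,n+1]}$ intrinsically, via $\mathrm{Ext}^1$-computations in $\mathrm{Coh}(S)$ for the flag $\mathcal{I}_{n+1} \subset \mathcal{I}_n$ using that $S$ is a smooth surface, and to deduce Koszul regularity a posteriori from smoothness combined with the matching expected codimension.
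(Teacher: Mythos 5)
The paper gives no proof of \cref{1pr:1}: it is imported wholesale as Proposition 2.2 of \cite{ES98}, so there is no internal argument to measure yours against. Your outline is a faithful reconstruction of the standard (Ellingsrud--Str\o mme) proof: the functor-of-points identification of $S^{[n,n+1]}$ with rank-one locally free quotients of $\mathcal{I}_n$, the realization inside $\mathbb{P}_{S^{[n]}\times S}(V_n)$ as the zero locus of the section of $W_n^\vee\otimes\mathcal{O}(1)$ coming from \cref{eq:resolve}, and the deduction of Koszul regularity from the coincidence of the actual dimension $2n+2$ with the expected dimension $(2n+1+v_n)-w_n=2n+2$ on the Cohen--Macaulay ambient space, exactly as required by \cref{projectivization}. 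Two points deserve emphasis. First, as you yourself note, all of the substance is in the smoothness and dimension count for $S^{[n,n+1]}$, which you defer to a tangent-space computation; that computation (compatible pairs of deformations of $\mathcal{I}_n$ and $\mathcal{I}_{n+1}$, controlled by $\mathrm{Ext}$ groups on the surface) is genuinely nontrivial but standard, so this is a deferral rather than a gap. Second, in step (iii) you should not appeal, even implicitly, to \cref{eq:Zn} to know that the inverse image ideal sheaf of $\mathcal{Z}_n$ is invertible, since that corollary is itself deduced from \cref{1pr:1}; the clean route is to observe that the blow-up $\mathrm{Bl}_{\mathcal{Z}_n}(S^{[n]}\times S)=\mathrm{Proj}\bigl(\bigoplus_d \mathcal{I}_n^d\bigr)$ embeds canonically as a closed subscheme of $\mathbb{P}_{S^{[n]}\times S}(\mathcal{I}_n)=\mathrm{Proj}\bigl(\mathrm{Sym}(\mathcal{I}_n)\bigr)$ containing the dense open locus over the complement of $\mathcal{Z}_n$, and then to use integrality of $S^{[n,n+1]}$ (smoothness plus irreducibility, the latter also needing a word) to conclude the inclusion is an equality. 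With those two items made explicit, your proposal is complete and follows the same route as the cited source.
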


\begin{corollary}
    \label{eq:Zn}
  $\mathcal{L}$ is the exceptional divisor of $S^{[n,n+1]}$ as the blow up of $\mathcal{Z}_n$, i.e. we have the short exact sequence:
  \begin{equation}
  0\to \mathcal{L}\to \mathcal{O}_{S^{[n,n+1]}} \to p_n^{-1}\mathcal{O}_{\mathcal{Z}_n}\to 0.  
\end{equation}
\end{corollary}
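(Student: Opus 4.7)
The plan is to recognize the asserted sequence as the tautological short exact sequence
$$0 \to \mathcal{I}_E \to \mathcal{O}_{S^{[n,n+1]}} \to \mathcal{O}_E \to 0$$
for the exceptional divisor $E \subset S^{[n,n+1]}$ of the blow up $p_n$, once $\mathcal{L}$ has been identified with $\mathcal{I}_E$ and $\mathcal{O}_E$ with $p_n^{-1}\mathcal{O}_{\mathcal{Z}_n}$ (that is, with the structure sheaf of the scheme-theoretic preimage $p_n^{-1}(\mathcal{Z}_n)$). By \cref{1pr:1}, $p_n$ realizes $S^{[n,n+1]}$ as the blow up of $\mathcal{Z}_n \subset S^{[n]} \times S$ along $\mathcal{I}_n$ and, equivalently, as $\mathbb{P}_{S^{[n]} \times S}(\mathcal{I}_n)$ in the Grothendieck (quotient) convention, so it suffices to identify $\mathcal{L}$ with $\mathcal{O}_{\mathbb{P}(\mathcal{I}_n)}(1)$ and to check that this line bundle is naturally the ideal of $E$.

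First, I would match the $\mathcal{L}$ of \cref{1pr:1} with $\mathcal{O}_{\mathbb{P}(\mathcal{I}_n)}(1)$. From the proposition, $\mathcal{L}$ is the restriction to $S^{[n,n+1]} \subset \mathbb{P}(V_n)$ of $\mathcal{O}_{\mathbb{P}(V_n)}(1)$. The surjection $V_n \twoheadrightarrow \mathcal{I}_n$ factors this inclusion through the closed embedding $\mathbb{P}(\mathcal{I}_n) \hookrightarrow \mathbb{P}(V_n)$, so the restriction is precisely $\mathcal{O}_{\mathbb{P}(\mathcal{I}_n)}(1)$. A sanity check via fibers confirms the convention: the fiber of $\mathcal{L}$ at $(\mathcal{I}_n, \mathcal{I}_{n+1}, x)$ is the length-one quotient $\mathcal{I}_n / \mathcal{I}_{n+1}$, so $\mathcal{L}$ is indeed the tautological \emph{quotient} line bundle, not its dual.

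Second, I would identify $\mathcal{O}_{\mathbb{P}(\mathcal{I}_n)}(1)$ with the inverse image ideal sheaf $p_n^{-1}\mathcal{I}_n \cdot \mathcal{O}_{S^{[n,n+1]}}$, which is the ideal $\mathcal{I}_E$ of $E = p_n^{-1}(\mathcal{Z}_n)$. This is the characteristic property of the blow-up description $S^{[n,n+1]} \cong \mathrm{Proj}\bigl(\bigoplus_{k\geq 0}\mathcal{I}_n^k\bigr)$: the tautological surjection $p_n^*\mathcal{I}_n \twoheadrightarrow \mathcal{O}(1)$ is the factorization of the pulled-back inclusion $p_n^*\mathcal{I}_n \to p_n^*\mathcal{O}_{S^{[n]}\times S} = \mathcal{O}_{S^{[n,n+1]}}$ through its (locally principal) image. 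Combining the two steps embeds $\mathcal{L} \hookrightarrow \mathcal{O}_{S^{[n,n+1]}}$ with quotient $\mathcal{O}_E = p_n^{-1}\mathcal{O}_{\mathcal{Z}_n}$, which is exactly the claimed sequence.

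There is no real obstacle here; the corollary is essentially a translation of \cref{1pr:1} into the language of ideal sheaves. The only subtlety worth flagging is the sign/twist convention on the Grothendieck projectivization, which is why the fiber computation in the first step is useful: it pins down $\mathcal{L}$ as the tautological quotient on $\mathbb{P}(\mathcal{I}_n)$, and hence as $\mathcal{O}(1) \cong \mathcal{O}(-E)$, which sits inside $\mathcal{O}_{S^{[n,n+1]}}$ as the ideal of the exceptional divisor rather than as a quotient of it.
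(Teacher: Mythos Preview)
Your proposal is correct and follows exactly the same route as the paper: the paper's one-line proof simply cites Hartshorne, Chapter~II, Proposition~7.13, which is precisely the statement you unpack in your second step (that on a blow-up $\mathrm{Proj}\bigl(\bigoplus_{k\ge 0}\mathcal{I}^k\bigr)$ the invertible sheaf $\mathcal{O}(1)$ coincides with the inverse-image ideal $\pi^{-1}\mathcal{I}\cdot\mathcal{O}$ cutting out the exceptional divisor). Your first step, matching $\mathcal{L}$ with $\mathcal{O}_{\mathbb{P}(\mathcal{I}_n)}(1)$ via \cref{1pr:1}, is the only other ingredient, and the paper takes that as already given.
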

\begin{proof}
  It is obvious from the Proposition 7.13 of \cite{hartshorne2013algebraic}.
\end{proof}

Let $\overline{V_{n}}$ be the kernel of the surjective morphism
$$p_{n}^*V_{n}\to \mathcal{O}_{S^{[n,n+1]}}(1)=\mathcal{L}.$$
Then $\overline{V_{n}}$ is also locally free. The morphism $p_n^*(W_{n})\to p_n^*(V_{n})$ factors through $\overline{V_{n}}$ and induces a morphsim
$$\overline{V_{n}}^\vee\otimes \omega_S\to p_n^*(W_{n}^\vee)\otimes \omega_S.$$
\begin{proposition}[Proposition 4.15 of \cite{neguct2017w}]
 
  \label{1pr:2.4}
    The scheme $S^{[n-1,n,n+1]}$ is smooth of dimension $2n+1$. Moreover, it is the projectivization of
    $$\overline{V_{n}}^\vee\otimes \omega_S\to p_n^*(W_{n}^\vee)\otimes \omega_S$$
    over $S^{[n,n+1]}$.
    For the two tautological line bundles $\mathcal{L}_1$, $\mathcal{L}_2$, $\mathcal{L}_1=q_n^*(\mathcal{O}_{S^{[n,n+1]}}(1))$ and $\mathcal{L}_2$ is the restriction of $\mathcal{O}_{\mathbb{P}_{S^{[n,n+1]}}(p_n^*(W_{n}^\vee)\otimes \omega_S)}(-1)$ in $S^{[n,n+1]}$.

\end{proposition}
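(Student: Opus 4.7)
The plan is to reduce \cref{1pr:2.4} to a Matlis/Serre duality computation on $S$, followed by pulling back along $p_n$. By the Cartesian diagram \eqref{diagram4.1}, $S^{[n-1,n,n+1]}$ is the fibre product of $S^{[n-1,n]}$ and $S^{[n,n+1]}$ over $S^{[n]}\times S$; the right-hand column is already described by \cref{1pr:1}, so it is enough to describe $(p_{n-1}^-,\pi_{n-1}):S^{[n-1,n]}\to S^{[n]}\times S$ via the resolution \eqref{eq:resolve} and then pull back through $p_n$.

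First I would show that over $S^{[n]}\times S$ the scheme $S^{[n-1,n]}$ is the projectivization, in the sense of \cref{projectivization}, of the dualised complex $V_n^\vee\otimes\omega_S\to W_n^\vee\otimes\omega_S$. A point of $S^{[n-1,n]}$ above $(\mathcal{I}_n,x)$ is a length-$1$ subsheaf of $\mathcal{O}_S/\mathcal{I}_n$ supported at $x$, i.e.\ a line in the socle $\operatorname{Hom}_{\mathcal{O}_{S,x}}(k,(\mathcal{O}_S/\mathcal{I}_n)_x)$. Matlis duality on the regular local ring $\mathcal{O}_{S,x}$ identifies this socle with $\operatorname{coker}(V_n^\vee|_x\to W_n^\vee|_x)\otimes\omega_S|_x$: dualising \eqref{eq:resolve} computes $\mathcal{E}xt^1(\mathcal{I}_n,\mathcal{O}_S)=\mathcal{E}xt^2(\mathcal{O}_S/\mathcal{I}_n,\mathcal{O}_S)$ as that cokernel, and its ``top'' at $x$ is Matlis-dual to the socle, up to an $\omega_S$ twist coming from local duality. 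I would then run the identification through the universal family and verify that the incidence scheme sits inside $\mathbb{P}(W_n^\vee\otimes\omega_S)$ cut out precisely by the Koszul complex of the map.

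Pulling back along $p_n$ gives the claim over $S^{[n,n+1]}$. The composition $p_n^*W_n\to p_n^*V_n\to p_n^*\mathcal{I}_n\to\mathcal{L}$ vanishes by exactness of \eqref{eq:resolve}, so $p_n^*W_n$ factors through $\overline{V_n}=\ker(p_n^*V_n\to\mathcal{L})$; dualising, the line subbundle $\mathcal{L}^{-1}\hookrightarrow p_n^*V_n^\vee$ lies in the kernel of $p_n^*V_n^\vee\to p_n^*W_n^\vee$, so quotienting by $\mathcal{L}^{-1}$ replaces the pulled-back complex by $\overline{V_n}^\vee\otimes\omega_S\to p_n^*W_n^\vee\otimes\omega_S$ without altering its projectivization, while making both terms have equal rank $w_n$. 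Under the socle identification, the tautological quotient on $\mathbb{P}(p_n^*W_n^\vee\otimes\omega_S)$ corresponds to $\mathcal{I}_{n-1}/\mathcal{I}_n$, giving $\mathcal{L}_2$ as the restriction of $\mathcal{O}_{\mathbb{P}}(-1)$, while $\mathcal{L}_1=q_n^*\mathcal{L}$ is immediate from the Cartesian diagram.

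The dimension count then reads $2n+2+w_n-1-w_n=2n+1$, the ambient bundle of relative dimension $w_n-1$ cut down by the $w_n$ Koszul equations. The main obstacle will be smoothness of the resulting scheme: this amounts to proving that the Koszul complex for $\overline{V_n}^\vee\to p_n^*W_n^\vee$ is a regular sequence, equivalently that the projectivization fibre $\mathbb{P}(\operatorname{coker})$ has the expected dimension at every point of $S^{[n,n+1]}$. I would establish this by stratifying $S^{[n,n+1]}$ by the rank of $\mathcal{E}xt^1(\mathcal{I}_n,\mathcal{O}_S)_x\otimes\omega_S|_x$, checking that the fibre dimension agrees with the expected one on each stratum, and cross-checking against the independently known smoothness of $S^{[n-1,n]}$ for smooth surfaces.
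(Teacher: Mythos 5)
The paper itself offers no proof of \cref{1pr:2.4}: it is imported verbatim as Proposition 4.15 of \cite{neguct2017w}, where it rests on the local-duality description of $S^{[n-1,n]}$ that you also use \emph{plus} an explicit tangent-space computation. Your route --- identify $S^{[n-1,n]}\to S^{[n]}\times S$ with the projectivization of $V_n^\vee\otimes\omega_S\to W_n^\vee\otimes\omega_S$ by applying Matlis/local duality to the dual of \cref{eq:resolve}, then base change along $p_n$ using \cref{diagram4.1} and pass from $p_n^*V_n^\vee$ to $\overline{V_n}^\vee$ --- is the standard one and is sound in outline; the factorization through $\overline{V_n}$ is correctly justified and the dimension count is right. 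Two bookkeeping caveats: in the strict sense of \cref{projectivization} the complex $p_n^*V_n^\vee\to p_n^*W_n^\vee$ does \emph{not} ``have the same projectivization'' as $\overline{V_n}^\vee\to p_n^*W_n^\vee$ (its Koszul complex has length $w_n+1$, exceeding the codimension $w_n$ of $Z$, so it cannot be a resolution); only the reduced complex qualifies. And the tautological quotient $\mathcal{O}(1)$ corresponds to the Matlis \emph{dual} of $\mathcal{I}_{n-1}/\mathcal{I}_n$, which is why $\mathcal{L}_2$ is $\mathcal{O}(-1)$; as written, your sentence identifies $\mathcal{I}_{n-1}/\mathcal{I}_n$ with the tautological quotient and simultaneously with $\mathcal{O}(-1)$, which is inconsistent.

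The genuine gap is smoothness. Your plan --- stratify by the rank of $\mathcal{E}xt^1(\mathcal{I}_n,\mathcal{O})_x$ and check that every fibre has the expected dimension --- would, if the codimension estimates $\operatorname{codim}\{\operatorname{rank}\geq r\}\geq r$ were actually proved (they are the real content here and are only asserted), show that $S^{[n-1,n,n+1]}$ is a local complete intersection of dimension $2n+1$ and that the Koszul complex is a resolution, i.e.\ the projectivization property of \cref{projectivization}. It does not show smoothness: expected dimension makes the section cutting out $Z$ regular, not transverse. Nor does ``cross-checking against the smoothness of $S^{[n-1,n]}$'' help, since a fibre product of smooth schemes over a smooth base can have the expected dimension and still be singular --- this paper's own $\mathfrak{Z}_-=S^{[n-1,n]}\times_{S^{[n-1]}}S^{[n-1,n]}$ and $\mathfrak{Z}_+$ (\cref{1pr:3.1}, \cref{2pr:structure}) are exactly such lci fibre products of expected dimension with nontrivial singularities. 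Smoothness of $S^{[n-1,n,n+1]}$ requires a separate deformation-theoretic identification of the tangent space at each point, as in Proposition 5.28 of \cite{neguct2018hecke} (which this paper quotes for precisely that purpose in \cref{sec:7}); without it your argument delivers only the lci and projectivization statements.
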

\subsection{The derived push-forward of line bundles on $S^{[n-1,n,n+1]}$}
\label{sec:derived}
 Given complexes $\{C_i| i\in \mathbb{Z}\}$ with morphisms $d_i:C_i\to C_{i+1}$ such that $d_i\circ d_{i+1}=0$, we will write 
$$\{\cdots \to C_{i+1}\to C_i\to \cdots \}$$
for the total complex of the double complex $C_{\bullet}$. In this subsection, we still abuse the notation to denote
$$\mathcal{I}_n:=\{W_{n}\to V_{n}\}, \qquad \mathcal{I}_n^\vee:=\{V_{n}^\vee\to W_{n}^\vee\}.$$

  \begin{lemma}
    \label{L1}
    We have the following formula for the derived push-forward $Rp_{n*}\mathcal{L}^{j}$:
    \begin{equation}
      \label{1eq:2.9}
      Rp_{n*}(\mathcal{L}^j)=
      \begin{cases}
        S^j(\mathcal{I}_n) & j \geq 0 \\
        \wedge^{-j-1}(\mathcal{I}_n^\vee)[-j-1] & j<0.
      \end{cases}
    \end{equation}
    \end{lemma}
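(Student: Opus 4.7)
The plan is to derive Lemma~\ref{L1} as an essentially immediate consequence of the Categorical Projection Lemma (\cref{prop:p}), specialized to the projectivization structure on the nested Hilbert scheme. By \cref{1pr:1}, $p_n\colon S^{[n,n+1]} \to S^{[n]}\times S$ realizes $S^{[n,n+1]}$ as the projectivization of the two-term complex $U = \{W_n \xrightarrow{s} V_n\}$, with $\mathcal{L}$ being the restriction of $\mathcal{O}_{\mathbb{P}(V_n)}(1)$. In the notation of \cref{prop:p} this means $\mathcal{L}^j$ plays the role of $\mathcal{O}_Z(k)$ with $k=j$. From \cref{1lm:2.2} we extract the two numerical inputs needed: $v_n - w_n = 1$ (so $w-v=-1$) and $\det(W_n)=\det(V_n)$ (so $\det(U)=\mathcal{O}$).

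Plugging these into \cref{prop:p}, the $\det(U)^{-1}$ factor trivializes, and we obtain
\[
  Rp_{n*}(\mathcal{L}^j) \;\simeq\; \mathrm{cone}\bigl(\wedge^{-j-1}(\mathcal{I}_n^\vee)[-j] \longrightarrow S^j(\mathcal{I}_n)\bigr).
\]
From here the two cases split by exponent. For $j\ge 0$, the exponent $-j-1$ is negative, so $\wedge^{-j-1}(\mathcal{I}_n^\vee)=0$ and the cone reduces to the target $S^j(\mathcal{I}_n)$. For $j<0$, the target $S^j(\mathcal{I}_n)=0$ and the cone is just the source shifted one step further; under the paper's shift convention the operation $\mathrm{cone}(A[s]\to 0)$ lands in $A[s-1]$, which converts $\wedge^{-j-1}(\mathcal{I}_n^\vee)[-j]$ into $\wedge^{-j-1}(\mathcal{I}_n^\vee)[-j-1]$. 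Both branches together reproduce \cref{1eq:2.9}.

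The only thing that requires a moment of thought is recognizing that $\det(U)=\mathcal{O}$ (so the isolated determinantal twist appearing in the general categorical projection formula disappears here), together with a routine bookkeeping of the cohomological shift through the cone of a map whose target vanishes. There is no substantive obstacle: once \cref{prop:p} and the geometric identification in \cref{1pr:1} are in hand, the lemma is a direct two-line substitution. The main potential pitfall for the reader is sign/shift tracking, which I would handle by doing the $j=-1$ sanity check (where the formula should and does give $\mathcal{O}_{S^{[n]}\times S}$, matching the elementary blow-up computation $Rp_{n*}\mathcal{O}(E)=\mathcal{O}$) and then invoking the formal statement.
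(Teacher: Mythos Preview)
Your proposal is correct and follows exactly the paper's approach: the paper's own proof is the two-line observation that \cref{1pr:1} identifies $S^{[n,n+1]}$ as the projectivization of $\{W_n \to V_n\}$, whence \cref{1eq:2.9} follows directly from \cref{prop:p}. Your write-up simply makes explicit the substitutions $w-v=-1$ and $\det(U)=\mathcal{O}$ coming from \cref{1lm:2.2} and then separates the two cases by sign of $j$, which is precisely the intended reading.
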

    \begin{proof}
      By \cref{1pr:1}, $S^{[n,n+1]}$ is the projectivization of
      $$W_n\xrightarrow{s_n} V_n.$$
      Thus \cref{1eq:2.9} follows from \cref{prop:p}.
    \end{proof}

    \begin{lemma}
      \label{lem:4.5}
      We have the following formula for $Rq_{n*}\mathcal{L}_2^k$:
      \begin{equation}
        \label{eq:l2}
        Rq_{n*}(\mathcal{L}_2^k)=
        \begin{cases}
          \{\mathcal{L} \to \mathcal{O}_{S^{[n,n+1]}}\} & k=0 \\
          \omega_S^{-k}\otimes \{Lp_n^*(\wedge^{k}\mathcal{I}_n)\mathcal{L}\to \cdots \to Lp_n^*(\mathcal{I}_n)\otimes \mathcal{L}^{k}\to \mathcal{L}^{k+1}\}[2k-1] & k>0 \\
          \omega_S^{-k}\otimes \{\cdots \to \mathcal{L}^{-1}Lp_n^*(S^{-k-1}\mathcal{I}_n^\vee)[-1]\to Lp_n^*(S^{-k}\mathcal{I}_n^\vee) \} & k<0
        \end{cases}
      \end{equation}
      where $Lp_n^*:D^b(S^{[n]}\times S)\to D^b(S^{[n,n+1]})$ is the derived pull back morphism.
    \end{lemma}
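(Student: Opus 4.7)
The plan is to apply the Categorical Projection Lemma (\cref{prop:p}) to the projectivization description of $q_n \colon S^{[n-1,n,n+1]} \to S^{[n,n+1]}$ from \cref{1pr:2.4}. Set $U' := \{\overline{V_n}^\vee \otimes \omega_S \to p_n^* W_n^\vee \otimes \omega_S\}$, so that $S^{[n-1,n,n+1]} = \mathbb{P}_{S^{[n,n+1]}}(U')$ with $\mathcal{L}_2$ the tautological $\mathcal{O}(-1)$; in particular $\mathcal{L}_2^k$ plays the role of $\mathcal{O}(-k)$ in \cref{prop:p}. Both terms of $U'$ have rank $w_n$, so $w=v$ in that lemma's notation, and using the defining sequence $0 \to \overline{V_n} \to p_n^*V_n \to \mathcal{L} \to 0$ together with $\det V_n = \det W_n$ from \cref{1lm:2.2} one computes $\det(U')=\mathcal{L}^{-1}$, hence $\det(U')^{-1}=\mathcal{L}$.

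Substituting into \cref{prop:p} gives a distinguished triangle $\mathcal{L}\otimes\wedge^{k}(U'^\vee)[k] \to S^{-k}(U') \to Rq_{n*}(\mathcal{L}_2^k) \xrightarrow{+1}$. For $k=0$, both terms survive and the cone is the stated complex $\{\mathcal{L}\to\mathcal{O}_{S^{[n,n+1]}}\}$. For $k>0$ the symmetric term $S^{-k}(U')$ vanishes and the cone reduces to $\mathcal{L}\otimes\wedge^{k}(U'^\vee)[k+1]$. For $k<0$ the exterior term $\wedge^{k}(U'^\vee)$ vanishes and the cone is simply $S^{-k}(U')$.

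What remains is to identify these answers with the displayed complexes. Writing $F := \{p_n^*W_n \to \overline{V_n}\}$, one has $U'^\vee = F\otimes\omega_S^{-1}$, so $\wedge^k(U'^\vee)=\omega_S^{-k}\otimes\wedge^k(F)$ and $S^{-k}(U')=\omega_S^{-k}\otimes S^{-k}(F^\vee)$. Stacking the defining sequence $0\to\overline{V_n}\to p_n^*V_n\to\mathcal{L}\to 0$ over the identity $p_n^*W_n=p_n^*W_n$ in degree $-1$ produces a short exact sequence of two-term complexes $0\to F\to Lp_n^*\mathcal{I}_n\to\mathcal{L}\to 0$. The filtration of $\wedge^k(Lp_n^*\mathcal{I}_n)$ induced by this extension, together with $\wedge^j\mathcal{L}=0$ for $j\geq 2$, gives an iterated fiber sequence expressing $\wedge^k(F)$ as the total complex $\{Lp_n^*(\wedge^k\mathcal{I}_n)\otimes\mathcal{L}\to\cdots\to Lp_n^*\mathcal{I}_n\otimes\mathcal{L}^k\to\mathcal{L}^{k+1}\}$ up to a shift that combines with the $[k+1]$ from the cone to yield the advertised $[2k-1]$. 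Dualizing the above extension to $0\to\mathcal{L}^{-1}\to Lp_n^*\mathcal{I}_n^\vee\to F^\vee\to 0$ and running the symmetric analogue handles $k<0$.

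The main obstacle is the last step: one has to carefully resolve the bicomplex arising from iterating the filtration and confirm that collapsing it gives exactly the complex stated in the lemma, including all cohomological shifts. A useful sanity check at the Grothendieck-group level is available by expanding $[\wedge^k F]=\sum_j(-1)^j[\mathcal{L}]^j[\wedge^{k-j}Lp_n^*\mathcal{I}_n]$ from the short exact sequence and verifying, via \cref{L1} and the projection formula, that this reproduces the class $[\omega_S^{-k}]\sum_i(-1)^i[S^{m-i}\mathcal{I}_n][\wedge^i\mathcal{I}_n]$ predicted by \cref{thm:4.9} after tensoring with $\mathcal{L}^{m-1-k}$ and pushing down by $p_n$; this K-theoretic match pins down the signs and shifts.
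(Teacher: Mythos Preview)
Your proposal is correct and follows the same approach as the paper: apply \cref{prop:p} to the projectivization description of $q_n$ from \cref{1pr:2.4}, compute $\det(U')^{-1}=\mathcal{L}$ and $w=v$, and then convert the resulting expression in $\overline{V_n}$ into one in $p_n^*V_n$ and $\mathcal{L}$ via the defining sequence $0\to\overline{V_n}\to p_n^*V_n\to\mathcal{L}\to 0$. The paper performs this last step by substituting the explicit resolution $0\to\wedge^j(\overline{V_n})\to\wedge^j(p_n^*V_n)\to\cdots\to\mathcal{L}^j\to 0$ (and its dual for $k<0$) term by term, which is exactly the componentwise unwinding of your filtration argument on the short exact sequence of two-term complexes $0\to F\to Lp_n^*\mathcal{I}_n\to\mathcal{L}\to 0$.
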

    \begin{proof}
      By \cref{1pr:2.4}, $S^{[n-1,n,n+1]}$ is the projectivization of
    $$\overline{V_{n}}^\vee\otimes \omega_S\to p_n^*(W_{n}^\vee)\otimes \omega_S$$
    over $S^{[n,n+1]}$.
      We notice that $p_n^*(W_{n}^\vee)\otimes \omega_S$ and $\overline{V_{n}}^\vee\otimes\omega_S$ have the same rank and
\begin{align*}
  \frac{det(p_n^*(W_{n})\otimes \omega_S)}{det(\overline{V_{n}})\otimes \omega_S}= \frac{det(p_n^*(W_{n}))}{det(\overline{V_{n}})} = \frac{det(p_n^*(V_{n}))}{det(p_n^*(V_{n}))\mathcal{L}^{-1}}=\mathcal{L}.
\end{align*}
By \cref{prop:p}, we have
\begin{itemize}
\item When $k=0$,
  \begin{equation}
    \label{4eq:k=0}
Rq_{n*}(\mathcal{O}_{S^{[n-1,n,n+1]}})=\{\mathcal{L}\to \mathcal{O}_{S^{[n,n+1]}}\},    
\end{equation}
\item When $k>0$,  we have
  $$Rq_{n*}\mathcal{L}_2^k=\mathcal{L}\otimes \omega_S^{-k}\otimes \{S^{k}(p_{n}^*W_{n})\to \overline{V_{n}}\otimes S^{k-1}(p_n^*W_{n})\cdots \to \wedge^{k}(\overline{V_{n}})\}[k-1].$$
  By the resolution of $\wedge^k(\overline{V_{n}})$:
  \begin{equation*}
0\to \wedge^{k}(\overline{V_{n}})\to \wedge^k(V_{n})\to \wedge^{k-1}(V_{n})\otimes \mathcal{L}\to \cdots \to \mathcal{L}^k\to 0,
\end{equation*}
we have
\begin{equation*}
  Rq_{n*}(\mathcal{L}_2^k)=\omega_S^{-k}\otimes \{Lp_n^*(\wedge^{k}\mathcal{I}_n)\mathcal{L}\to \cdots \to Lp_n^*(\mathcal{I}_n)\otimes \mathcal{L}^{k}\to \mathcal{L}^{k+1}\}[2k-1].
\end{equation*}
\item When $k<0$,  we have
\begin{align*}
  Rq_{n*}(\mathcal{L}_2^k)=\omega_S^{-k}\otimes \{\cdots \to \overline{V_{n}}^\vee\otimes S^{-k-1}(p_n^*W_{n}^\vee)\to S^{-k}(p_n^*W_{n}^\vee)\}
\end{align*}
By the resolution of $\wedge^k(\overline{V_n}^\vee)$:
$$0\to \mathcal{L}^{k}\to \cdots \to \wedge^{-k-1}(V_{n}^\vee)\mathcal{L}^{-1}\to \wedge^k(V_{n}^\vee)\to \wedge^k(\overline{V_{n}}^\vee)\to 0,$$
we have
\begin{align}
  \label{1eq:4.14}
  Rq_{n*}(\mathcal{L}_2^k)&=\omega_S^{-k}\otimes \{\cdots \to \mathcal{L}^{-1}Lp_n^*(S^{-k-1}\mathcal{I}_n^\vee)[-1]\to Lp_n^*(S^{-k}\mathcal{I}_n^\vee) \}
\end{align}
\end{itemize}
\end{proof}
By \cref{L1} and \cref{lem:4.5}, we have
\begin{corollary}
  \label{cor}
  We have the following formula for $R(p_n\circ q_n)_*\mathcal{L}_1^{m-1-k}\mathcal{L}_2^k$:
  \begin{multline}
    R(p_n\circ q_n)_*(\mathcal{L}_1^{m-1-k}\mathcal{L}_2^k)= \\
      \begin{cases}
    \omega_S^{-k}\otimes \{\wedge^{-m}(\mathcal{I}_n^\vee)\to \cdots \to \wedge^{-m+k}(\mathcal{I}_n^\vee)S^{-k}(\mathcal{I}_n^\vee)\}[-m+k] & m\leq k \leq -1 \\
    \omega_S^{-k}\otimes \{\wedge^{k}(\mathcal{I}_n)S^{m-k}(\mathcal{I}_n)\to \cdots \to \mathcal{I}_n\otimes S^{m-1}(\mathcal{I}_n)\to S^{m}(\mathcal{I}_n)\}[2k-1] & 1\leq k\leq m.\\
    \end{cases}
  \end{multline}
\end{corollary}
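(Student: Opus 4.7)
The plan is to reduce the statement to \cref{L1,lem:4.5} via the projection formula. Since $\mathcal{L}_1 = q_n^*\mathcal{L}$ by \cref{1pr:2.4}, one has
$$R(p_n\circ q_n)_*(\mathcal{L}_1^{m-1-k}\mathcal{L}_2^k) \cong Rp_{n*}\bigl(\mathcal{L}^{m-1-k}\otimes Rq_{n*}\mathcal{L}_2^k\bigr).$$
Substituting the explicit formula for $Rq_{n*}\mathcal{L}_2^k$ from \cref{lem:4.5} reduces everything to a term-by-term computation: each summand there has the shape $Lp_n^*(X)\otimes\mathcal{L}^j$, so after multiplying by $\mathcal{L}^{m-1-k}$ the projection formula pulls $X$ outside $Rp_{n*}$, and every $Rp_{n*}\mathcal{L}^{m-1-k+j}$ is computed by \cref{L1}.

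First I treat the case $1\leq k\leq m$. \cref{lem:4.5} gives $Rq_{n*}\mathcal{L}_2^k = \omega_S^{-k}\otimes\{Lp_n^*(\wedge^k\mathcal{I}_n)\mathcal{L}\to\cdots\to Lp_n^*(\mathcal{I}_n)\mathcal{L}^{k}\to\mathcal{L}^{k+1}\}[2k-1]$, whose $i$-th term (for $i=0,\ldots,k$) is $Lp_n^*(\wedge^{k-i}\mathcal{I}_n)\otimes\mathcal{L}^{1+i}$. Multiplying by $\mathcal{L}^{m-1-k}$ yields $Lp_n^*(\wedge^{k-i}\mathcal{I}_n)\otimes\mathcal{L}^{m-k+i}$, with exponent $\geq 0$ because $k\leq m$. \cref{L1} then gives $Rp_{n*}\mathcal{L}^{m-k+i}=S^{m-k+i}(\mathcal{I}_n)$, and the complex reassembles, inheriting its differentials from the $Rq_{n*}$-formula, as the claimed $\omega_S^{-k}\otimes\{\wedge^k(\mathcal{I}_n)S^{m-k}(\mathcal{I}_n)\to\cdots\to\mathcal{I}_n\otimes S^{m-1}(\mathcal{I}_n)\to S^m(\mathcal{I}_n)\}[2k-1]$.

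The case $m\leq k\leq -1$ is parallel but uses the $k<0$ branch of \cref{lem:4.5}, derived in \cref{1eq:4.14} from the Koszul resolution of $\wedge^\bullet(\overline{V_n}^\vee)$. After tensoring with $\mathcal{L}^{m-1-k}$, every resulting $\mathcal{L}$-exponent is strictly negative (because $k\leq -1$), so \cref{L1} converts each term into a suitably shifted wedge power of $\mathcal{I}_n^\vee$. The internal shifts in the $Rq_{n*}$-complex combine with the shifts $[-j-1]$ introduced by $Rp_{n*}\mathcal{L}^j$ into a single overall shift $[-m+k]$, and the terms reassemble into $\omega_S^{-k}\otimes\{\wedge^{-m}(\mathcal{I}_n^\vee)\to\cdots\to\wedge^{-m+k}(\mathcal{I}_n^\vee)S^{-k}(\mathcal{I}_n^\vee)\}[-m+k]$.

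The only real difficulty I anticipate is bookkeeping: aligning the internal shifts of the $Rq_{n*}$-formula against those produced by $Rp_{n*}$ on negative line-bundle powers, and checking that the resulting differentials coincide with the Koszul and multiplication maps implicit in the notation $\wedge^a(\mathcal{I}_n^\vee)S^b(\mathcal{I}_n^\vee)$ and $\wedge^a(\mathcal{I}_n)S^b(\mathcal{I}_n)$ for the two-term complexes $\mathcal{I}_n$ and $\mathcal{I}_n^\vee$. No new geometry enters; the corollary is a purely homological consequence of \cref{L1,lem:4.5} and the projection formula.
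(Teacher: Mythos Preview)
Your proposal is correct and follows the paper's approach exactly: the paper's proof consists of the single sentence ``By \cref{L1} and \cref{lem:4.5}'', and your use of the projection formula via $\mathcal{L}_1=q_n^*\mathcal{L}$ together with a term-by-term application of those two lemmas is precisely the intended argument, spelled out in more detail than the paper itself provides.
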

  \begin{remark}
    When $m=k<0$, same as the computation in the Grothendieck group,  the complex $$\{\wedge^{-k}(\mathcal{I}_n^\vee)\to \cdots \to S^{-k}(\mathcal{I}_n^\vee)\}$$
    is quasi-isomorphic to $0$ and thus we have
      \begin{equation}
    \label{1:rmk}
R(p_n\circ q_n)_*\mathcal{L}^{-1}_1\mathcal{L}_2^{k}=0.
  \end{equation}
  \end{remark}

  \begin{proof}[Proof of \cref{thm:4.9}]
    It follows from \cref{4eq:lem2.11}, \cref{L1} and \cref{cor}.
  \end{proof}

\section{Singularity of the Minimal Model Program}
\label{sec:4}
In this section, we will review the singularities in the minimal model program from \cite{Kollar-Mori,MR3057950}. We use the notation $D$ to replace $\Delta$ in  \cite{Kollar-Mori,MR3057950}, as $\Delta$ is already used to denote the diagonal embedding in our paper.  For a normal variety, we will denote $K_X$ the canonical Weil divisor. We will denote by $\omega_X$ the dualizing sheaf when $X$ is Cohen-Macaulay. They coincide when $X$ is Gorenstein.

\subsection{Discrepancy and Classifacation of Singularities}

\begin{definition}[Definition 2.25 of \cite{Kollar-Mori} or Definition 2.4 of \cite{MR3057950}, Discrepancy]
  Let $(X,D)$ be a pair where $X$ is a normal variety and $D=\sum a_iD_i,a_i\in \mathbb{Q}$ is a sum of distinct prime divisors. Assume that $m(K_X+D)$ is Cartier for some $m>0$. Suppose $f:Y\to X$ is a birational morphism from a normal variety $Y$. Let $E\subset Y$ denote the exceptional locus of $f$ and $E_i\subset E$ the irreducible exceptional divisors. The two line bundles
  $$\mathcal{O}(m(K_Y+f^{-1}D))|_{Y-E} \text{ and } f^*\mathcal{O}_{X}(m(K_X+D))|_{Y-E}$$
  are naturally isomorphic. Thus there are rational numbers $a(E_i,X,D)$ such that $ma(E_i,X,D)$ are integers, and
  $$\mathcal{O}_{Y}(m(K_Y+f^{-1}D))\cong f^*\mathcal{O}_X(m(K_X+D))\otimes \mathcal{O}_Y(\sum_{i} ma(E_i,X,D)E_i).$$
  $a(E,X,D)$ is called the discrepancy of $E$ with respect to $(X,D)$. We define the center of $E$ in $X$ by
$$center_X(E):=f(E).$$
  When $D=0$, then $a(E_i,X,D)$ depends only on $E_i$ but not on $f$.
\end{definition}
\begin{lemma}[Lemma 2.29 of \cite{Kollar-Mori}]
  \label{blowup}
  Let $X$ be a smooth variety and $D=\sum a_i D_i$ a sum of distinct prime divisors. Let $Z\subset X$ be a closed subvariety of codimension $k$. Let $p:Bl_ZX\to X$ be the blow up of $Z$ and $E\subset Bl_ZX$ the irreducible component of the exceptional divisor which dominates $Z$, (if $Z$ is smooth, then this is the only component). Then
  $$a(E,X,D)=k-1-\sum_i a_i mult_ZD_i$$
  where $mult_ZD_i$ is the multiplicity of $D_i$ in $Z$.
\end{lemma}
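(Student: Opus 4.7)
The plan is to reduce to the case when $Z$ is smooth, and then combine the two standard formulas for a blowup with smooth center: the canonical class formula and the divisor pullback formula.

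First I would observe that the discrepancy $a(E,X,D)$ is determined by the divisorial valuation $\operatorname{ord}_E$ on the function field $k(X)$, and both this valuation and the multiplicities $\operatorname{mult}_Z D_i$ depend only on the local behavior of $X$, $D$, and $Z$ at the generic point $\eta_Z$ of $Z$. Since $Z$ is an integral subvariety, it is smooth on a dense open $Z^\circ \subset Z$; after replacing $X$ by an open neighborhood $U \supset \eta_Z$ with $Z \cap U \subset Z^\circ$, the center becomes smooth, $E \cap p^{-1}(U)$ is the unique component of the exceptional locus of $Bl_{Z\cap U} U \to U$, and neither side of the claimed formula changes. This reduces everything to the case in which $Z$ itself is smooth.

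Second, for smooth $Z$ of codimension $k$ in smooth $X$, I would invoke the canonical class formula
$$K_Y = p^{*} K_X + (k-1) E,$$
which follows from the relative cotangent sequence combined with the identification $Y \cong \mathbb{P}_Z(N_{Z/X}^{\vee})$ in a neighborhood of $E$, or equivalently from a direct computation in a local trivialization of $N_{Z/X}$. I would then invoke the pullback formula
$$p^{*} D_i = \widetilde{D}_i + m_i E, \qquad m_i := \operatorname{mult}_Z D_i,$$
where $\widetilde{D}_i = p^{-1}_{*} D_i$ is the proper transform. This identity follows from the definition of $\operatorname{mult}_Z D_i$ as the order of vanishing at $\eta_Z$ of a local equation of $D_i$, combined with the defining relation $p^{-1}(\mathcal{I}_Z)\cdot \mathcal{O}_Y = \mathcal{O}_Y(-E)$ of the blowup.

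Subtracting the two sides in the definition of discrepancy then gives
$$K_Y + p^{-1}_{*} D - p^{*}(K_X + D) = (K_Y - p^{*} K_X) + \sum_i a_i \bigl( \widetilde{D}_i - p^{*} D_i \bigr) = \Bigl( (k-1) - \sum_i a_i m_i \Bigr) E,$$
and reading off the coefficient of $E$ produces the desired identity $a(E,X,D) = k-1-\sum_i a_i \operatorname{mult}_Z D_i$. The only genuine obstacle is the reduction to smooth $Z$: one must check that shrinking $X$ around $\eta_Z$ preserves the discrepancy (since $a(E,X,D)$ depends only on $\operatorname{ord}_E$), preserves each multiplicity (since $\operatorname{mult}_Z D_i$ is defined at $\eta_Z$), and keeps track of the specified component $E$ (since $E$ dominates $Z$, it is determined by its behavior over any dense open of $Z$); all three points are automatic from the definitions.
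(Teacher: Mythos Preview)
Your argument is correct and is essentially the standard proof from Koll\'ar--Mori. The paper does not supply its own proof of this lemma; it simply quotes it as Lemma~2.29 of \cite{Kollar-Mori}, so there is nothing to compare against beyond noting that your reduction-to-smooth-center plus the two blowup formulas is exactly the approach taken in that reference.
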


\begin{definition}[Definition 2.34 and 2.37 of \cite{Kollar-Mori}, or Definition 2.8 of \cite{MR3057950}]
  Let $(X,D)$ be a pair where $X$ is a normal variety and $D=\sum a_iD_i$ is a sum of distinct prime divisors where $a_i\in \mathbb{Q}$ and $a_i\leq 1$. Assume that $m(K_X+D)$ is Cartier for some $m>0$. We say that $(X,D)$ is

  \begin{minipage}{0.3 \linewidth} 
    \begin{equation*}
      \begin{cases}
     terminal \\
     canonical \\
     klt \\
     plt \\
     dlt \\
     lc \\
    \end{cases}
    \end{equation*}
  \end{minipage}
    \begin{minipage}{0\linewidth}
    \begin{equation*}
      \text{if } a(E,X,D) \text{ is }
      \begin{cases}
        >0, \text{ for every exceptional } E\\
        \geq 0,  \text{ for every exceptional } E\\
        >-1,  \text{ for every } E \\
        >-1,  \text{ for every exceptional } E \\
        >-1,  \text{ if } center_XE\subset \text{non-snc}(X, D)\\
        \geq-1. \text{ for every } E\\
      \end{cases}
    \end{equation*}
  \end{minipage}
  
  Here klt is short for ``Kawamata log terminal'', plt for ``pure log terminal'' and lc for ``log canonical''.Above, non-snc$(X,D)$ denotes the set of points where $(X,D)$ is not simple normal crossing(snc for short).

  We say that $X$ is terminal (canonical,etc) if and only if $(X,0)$ is terminal (canonical,etc).

  Each class contains the previous one, except canonical does not imply klt if $D$ contains a divisor with coefficient $1$.
\end{definition}
\begin{theorem}[Theorem 5.50 of \cite{Kollar-Mori}, or Theorem 4.9 of \cite{MR3057950}, Inversion of adjunction]
  \label{inversion}
    Let $X$ be normal and $S\subset X$ a normal Weil divisor which is Cartier in codimension $2$. Let $B$ be an effective $\mathbb{Q}$-divisor and assume that $K_X+S+B$ is $\mathbb{Q}$-Cartier. Then $(X,S+B)$ is plt near $S$ iff $(S,B|_S)$ is klt.
  \end{theorem}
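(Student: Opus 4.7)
The plan is to establish the equivalence in two directions, with the nontrivial content concentrated in the converse. For the forward implication, assume $(X,S+B)$ is plt in a neighborhood of $S$. Because $S$ is normal and Cartier in codimension $2$, the classical adjunction formula applies without a nontrivial different in codimension one of $S$, giving $(K_X+S+B)|_S \sim_{\mathbb{Q}} K_S + B|_S$. For any exceptional divisor $F$ over $S$, one produces a birational modification of $X$ in which $F$ arises as a prime divisor on the strict transform of $S$, and comparing the two discrepancy formulas yields $a(F,S,B|_S) = a(F_X,X,S+B)+1 > 0$. Hence $(S, B|_S)$ is klt.

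For the converse, assume $(S, B|_S)$ is klt and suppose for contradiction that $(X,S+B)$ fails to be plt near $S$. Take a log resolution $f\colon Y\to X$ of $(X, S+B)$ whose restriction to the strict transform $S_Y$ is simultaneously a log resolution of $(S, B|_S)$; this can be arranged by first resolving $S$ inside $X$ and then further blowing up so that $S_Y$ is smooth and all exceptional divisors meet $S_Y$ transversally. Write
\begin{equation*}
K_Y + S_Y + f_{*}^{-1}B = f^{*}(K_X + S + B) + \sum_i a_i E_i,
\end{equation*}
with $a_i = a(E_i, X, S+B)$. By assumption there exists an index $i_0$ with $a_{i_0}\leq -1$ and $f(E_{i_0})\cap S\neq\emptyset$.

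The main technical obstacle, and the heart of the theorem, is to transfer this bad divisor to a bad divisor on $S$. The standard mechanism is the Koll\'ar--Shokurov connectedness theorem: the locus $\{a_i \leq -1\}$ has connected intersection with every fiber of $f$ over a neighborhood of $S$, so $E_{i_0}$ is forced to meet $S_Y$. Restricting the displayed identity to $S_Y$ and using the adjunction $(K_Y + S_Y)|_{S_Y} = K_{S_Y}$ together with the codimension-two Cartier hypothesis (which suppresses the different so that $f^{*}(K_X+S+B)|_{S_Y} = (f|_{S_Y})^{*}(K_S + B|_S)$), the prime divisor $F := E_{i_0}\cap S_Y$ on $S_Y$ is $(f|_{S_Y})$-exceptional with discrepancy $a(F, S, B|_S) = a_{i_0} \leq -1$, contradicting the klt hypothesis on $(S, B|_S)$.

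Thus the difficulty splits into (i) arranging a sufficiently compatible log resolution, which is routine, (ii) the careful bookkeeping of the different under the Cartier-in-codimension-$2$ assumption, and (iii) invoking the connectedness theorem, whose proof rests on Kawamata--Viehweg vanishing applied to $-(K_Y + S_Y + f_{*}^{-1}B - f^{*}(K_X+S+B))$. The hard part is (iii); (i) and (ii) are standard but must be handled carefully to ensure the adjunction identity restricts cleanly to $S_Y$ without spurious correction terms.
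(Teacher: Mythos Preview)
The paper does not give its own proof of this theorem: it is quoted verbatim as a background result from Koll\'ar--Mori (Theorem~5.50) and Koll\'ar (Theorem~4.9), and is only \emph{applied} later (in the proof of Proposition~\ref{besemidlt}). So there is no in-paper argument to compare against. Your outline is essentially the standard proof one finds in those references, with the Koll\'ar--Shokurov connectedness theorem (Koll\'ar--Mori, Theorem~5.48) doing the real work in the converse direction.

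Two small corrections are worth noting. In the forward direction, the formula $a(F,S,B|_S)=a(F_X,X,S+B)+1$ is not correct: restricting the discrepancy identity on a common log resolution gives $a(F,S,B|_S)=a(E,X,S+B)$ with no shift, where $F$ is a component of $E|_{S_Y}$, and this already yields $>-1$ from the plt hypothesis. In the converse, connectedness does not literally force your chosen $E_{i_0}$ to meet $S_Y$; it says the locus $\{a_i\le -1\}\cup S_Y$ is connected in each fiber over $S$, so \emph{some} $E_j$ with $a_j\le -1$ in the connecting chain meets $S_Y$, and that $E_j$ is what produces the contradiction. With these adjustments your sketch matches the cited proof.
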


\subsection{Rational Singularities}

\begin{definition}[Definition 5.8 of \cite{Kollar-Mori}]
  \label{rational}
    Let $X$ be a variety of a field of characteristic $0$. We say that $X$ is a rational singularity if there exists one resolution of singularities $f:Y\to X$, 
    \begin{enumerate}
    \item $f_*\mathcal{O}_Y=\mathcal{O}_X$(equivalently, $X$ is normal), and 
    \item $R^if_*\mathcal{O}_Y=0$ for $i>0$.
    \end{enumerate}
  \end{definition}
  \begin{remark}
    By Theorem 5.10 of \cite{Kollar-Mori}, if $X$ is a rational singularity, then for all resolution of singularities $f:Y\to X$, 
    \begin{enumerate}
    \item $f_*\mathcal{O}_Y=\mathcal{O}_X$,
    \item $R^if_*\mathcal{O}_Y=0$ for $i>0$.
    \end{enumerate}
  \end{remark}
  \begin{theorem}[Theorem 5.22 of \cite{Kollar-Mori}]
     \label{thm:5.22}
    Let $X$ be a normal variety over a field of characteristic $0$. If $X$ is a canonical singularity, then $X$ is a rational singularity. If $X$ is Gorenstein, then $X$ is a canonical singularity if $X$ is a rational singularity.
  \end{theorem}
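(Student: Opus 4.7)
The plan is to handle the two implications separately, using Grauert--Riemenschneider vanishing in characteristic zero, Grothendieck duality for a resolution $f\colon Y\to X$, and the identification of $Rf_*\omega_Y$ as the key bridge between the two conditions.

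For canonical $\Rightarrow$ rational, I would first reduce to the Gorenstein case via an index-one cyclic cover of $X$: such a cover preserves canonicity, makes $K_X$ Cartier, and rationality descends from the cover back to the base because a finite surjection from a variety with only rational singularities to a normal quotient-like target preserves the vanishing through the trace map. On the Gorenstein cover, take a log resolution $f\colon Y\to X$ and write $K_Y = f^*K_X + E$; the canonical hypothesis forces $E$ to be effective and exceptional. The projection formula, together with the standard fact $f_*\mathcal{O}_Y(E) = \mathcal{O}_X$ for effective exceptional $E$, then gives $f_*\omega_Y \cong \omega_X$. Combined with Grauert--Riemenschneider vanishing $R^i f_*\omega_Y = 0$ for $i>0$, this upgrades to $Rf_*\omega_Y \cong \omega_X$ in the derived category, and applying Grothendieck duality relative to $f$ converts this into $Rf_*\mathcal{O}_Y \cong \mathcal{O}_X$, which is precisely the definition of rationality.

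For the converse, assume $X$ is Gorenstein and rational. Then $K_X$ is Cartier, and for any resolution $f\colon Y\to X$ the discrepancy divisor $E := K_Y - f^*K_X$ is an integral exceptional divisor on $Y$. Grothendieck duality applied in the reverse direction converts the rationality isomorphism $Rf_*\mathcal{O}_Y \cong \mathcal{O}_X$ into $Rf_*\omega_Y \cong \omega_X$, and in particular $f_*\omega_Y \cong \omega_X$. Since $\omega_Y \cong f^*\omega_X \otimes \mathcal{O}_Y(E)$, the projection formula rewrites this as $f_*\mathcal{O}_Y(E) \cong \mathcal{O}_X$. Decomposing $E = E^+ - E^-$ into disjoint effective pieces, any nontrivial $E^-$ would force $f_*\mathcal{O}_Y(E)$ to inject as a proper ideal sheaf in $f_*\mathcal{O}_Y(E^+) = \mathcal{O}_X$, contradicting the established isomorphism. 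Hence $E^- = 0$, so $E\geq 0$, which is exactly the canonical condition.

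The main obstacle, in my view, is the first implication in the non-Gorenstein canonical setting: both the cyclic cover reduction (verifying preservation of canonicity and descent of rationality) and the clean invocation of Grothendieck duality require careful bookkeeping, since $K_X$ is only $\mathbb{Q}$-Cartier and $\omega_X$ must be treated as a reflexive sheaf rather than a line bundle. Once the problem is reduced to the Gorenstein situation, the argument is essentially formal: Grauert--Riemenschneider does the cohomological work and duality performs the translation. The converse is cleaner because the Gorenstein hypothesis guarantees from the start that $\omega_X$ is invertible, so every manipulation takes place at the level of honest line bundles.
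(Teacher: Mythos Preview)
The paper does not give a proof of this statement: it is simply quoted as Theorem~5.22 of Koll\'ar--Mori and used as a black box (for instance, to conclude that canonical singularities are rational in the discussion after Proposition~1.3 and in Example~4.8). So there is no proof in the paper to compare your proposal against.

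That said, your outline is the standard strategy and is essentially correct, with one genuine gap worth flagging. In the forward direction you want to pass from $Rf_*\omega_Y \cong \omega_{X'}$ to $Rf_*\mathcal{O}_Y \cong \mathcal{O}_{X'}$ via Grothendieck duality on the index-one cover $X'$. For that step to go through cleanly you need the dualizing complex of $X'$ to be $\omega_{X'}[\dim X']$, i.e.\ you need $X'$ to be Cohen--Macaulay. This is not automatic from ``canonical with $K_{X'}$ Cartier'', and you have not established it; without it the duality step does not literally produce $\mathcal{O}_{X'}$ on the nose. Koll\'ar--Mori deal with this by proving Cohen--Macaulayness of canonical singularities as a separate step (or one can invoke Elkik's original argument, which is organized differently). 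Your sketch of the converse under the Gorenstein hypothesis is fine: there $\omega_X$ is invertible from the start, the duality translation is legitimate, and the $E^+ - E^-$ argument is the standard way to force $E \ge 0$.
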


  Here are also some examples of rational singularities:

 \begin{example}
    By \cite{MR3484148}, the universal closed subscheme $\mathcal{Z}_{n}$ is a rational singularity.
  \end{example}
  \begin{example}
    \label{canonical}
    The morphism $q_n:S^{[n-1,n,n+1]}\to S^{[n,n+1]}$ factors through $p_n^{-1}\mathcal{Z}_n$ and induces a resolution of $p_n^{-1}\mathcal{Z}_n$. By \cref{lem:4.5} and \cref{eq:Zn}, $p_n^{-1}\mathcal{Z}_n$ is a Gorenstein rational singularity and thus is a canonical singularity by \cref{thm:5.22}.
  \end{example}
\subsection{Semi-dlt Pairs}

\begin{definition}[Definition 1.10 of \cite{MR3057950}, Semi-snc pairs]
Let $W$ be a regular subscheme and $\sum_{i\in I}E_i$ a snc divisor on $W$. Write $I=I_Y\cup I_D$ as a disjoint union. Set $Y:=\sum_{i\in I_V}E_i$ as a subscheme and $D_Y:=\sum_{i\in I_D}a_iE_i|_Y$ as a divisor on $Y$ for some $a_i\in \mathbb{Q}$. We call $(Y,D_Y)$ an embedded semi-snc pair. A pair $(X,D)$ is called semi-snc if it is Zariski locally isomorphic to an embedded semi-snc pair.
\end{definition}

\begin{example}
  We have the following three examples of semi-snc pairs $(X,D)$:
  \begin{enumerate}
  \item $X=\{z=0\} \subset \mathbb{A}^3$ and $D=a_x(x|_X=0)+a_y(y|_X=0)$.
  \item $X=\{yz=0\}\subset \mathbb{A}^3$ and $D=a_x(x|_X=0)$.
  \item $X=\{xyz=0\}\subset \mathbb{A}^3$ and $D=0$.
  \end{enumerate}
\end{example}
\begin{definition}[Definition 5.1 of \cite{MR3057950}, Demi-normal schemes]
  A scheme $X$ is called demi-normal if it is $S_2$ and codimension $1$ points are either regular points or nodes. Here we say a scheme $X$ has a node at a point $x\in X$ if its local ring $\mathcal{O}_{x,X}$ can be written as $R/(f)$ where $(R,m)$ is a regular local ring of dimension $2$, $f\in m^2$ and $f$ is not a square in $m^2/m^3$.
\end{definition}

\begin{definition}[Section 5.2 of \cite{MR3057950}, Conductor]
  \label{conductor}
  Let $X$ be a reduced scheme and $\pi:\bar{X}\to X$ its normalization. The conductor ideal
  $$cond_X:=\mathcal{H}om(\pi_*\mathcal{O}_X,\mathcal{O}_X)\subset \mathcal{O}_X$$
  is the largest ideal sheaf on $X$ that is also an ideal sheaf on $\bar{X}$. We write it as $cond_{\bar{X}}$ when we view the conductor as an ideal sheaf on $\bar{X}$. The conductor subschemes are defined as
  $$T:=Spec_X(\mathcal{O}_X/cond_X) \qquad \bar{T}:=Spec_{\bar{X}}(\mathcal{O}_{\bar{X}}/cond_{\bar{X}}).$$
\end{definition}

\begin{definition}[Definition-Lemma 5.10 of \cite{MR3057950}]
  Let $X$ be a demi-normal scheme with normalization $\pi:\bar{X}\to X$ and conductors $T\subset X$ and $\bar{T}\subset \bar{X}$. Let $D$ be an effective $\mathbb{Q}$-divisor whose support does not contain any irreducible components of $T$ and $\bar{D}$ the divisorial part of $\pi^{-1}(D)$. The pair $(X,D)$ is called semi log canonical or slc if
  \begin{enumerate}
  \item $K_X+D$ is $\mathbb{Q}$-Cartier, and 
  \item $(\bar{X},\bar{D}+\bar{T})$ is lc.
  \end{enumerate}
\end{definition}

\begin{definition}[Definition 5.19 of \cite{MR3057950}]
  \label{semi-dlt}
  An slc pair $(X,D)$ is semi-divisorial log terminal or semi-dlt if $a(E,X,D)>-1$ for every exceptional divisor $E$ over $X$ such that $(X,D)$ is not snc at the generic point of $center_XE$.
\end{definition}

\begin{example}
  \label{snc}
  A semi-snc pair $(X,D)$ is always semi-dlt.
\end{example}

\begin{proposition}[Proposition 5.20 of \cite{MR3057950}]
  \label{sdlt}
  Let $(X,D)$ be a demi-normal pair over a field of characteristic $0$. Assume that the normalization $(\bar{X},\bar{T}+\bar{D})$ is dlt and there is a codimension $3$ set $W\subset X$ such that $(X\backslash W,D|_{X\backslash W})$ is semi-dlt. Then
  \begin{enumerate}
  \item the irreducible components of $X$ are normal,
  \item $K_X+D$ is $\mathbb{Q}$-Cartier and 
  \item $(X,D)$ is semi-dlt.
  \end{enumerate}

\end{proposition}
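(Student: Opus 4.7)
The plan is to descend structure from the normalization $\pi\colon \bar X\to X$ across the conductor, using Kollár's gluing description: a demi-normal pair is reconstructed from $(\bar X,\bar T+\bar D)$ together with a generically fixed-point-free involution $\tau$ on the normalization $\bar T^n$ of $\bar T$. Each of the three assertions then becomes a question either on $\bar X$, where the dlt hypothesis applies directly, or on $\bar T^n$, where the codimension-three set $W\subset X$ corresponds to a codimension-two locus on $\bar T^n$ and so is handled by $S_2$-extension.

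For (1), I would first invoke the standard fact that in a dlt pair every boundary component of coefficient one is normal; applied to $(\bar X,\bar T+\bar D)$ this gives normality of the components of $\bar T$. The semi-dlt hypothesis on $X\setminus W$ forces $(X,D)$ to be semi-snc at the generic point of each component of $T$, so the gluing involution $\tau$ must identify points of $\bar T^n$ lying in distinct irreducible components of $\bar X$ rather than in the same one. Hence each normalization map $\bar X_i\to X_i$ is injective in codimension one; combined with normality of $\bar X_i$ and the $S_2$ condition inherited from $X$, Serre's criterion $R_1+S_2$ gives that $X_i$ is normal.

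For (2) and (3) the plan is to apply Kollár's descent theorem for reflexive sheaves on demi-normal schemes: a reflexive rank-one sheaf on $\bar X$ descends to a $\mathbb{Q}$-Cartier divisor on $X$ precisely when its pullback is compatible with $\tau$ on $\bar T^n$. Dlt-ness of the normalization yields that $K_{\bar X}+\bar T+\bar D$ is $\mathbb{Q}$-Cartier, and the semi-dlt hypothesis on $X\setminus W$ yields $\tau$-compatibility in codimension one on $\bar T^n$; since $\bar T^n$ is $S_2$ and the compatibility is a reflexive condition, it extends automatically across the codimension-two locus of $\bar T^n$ corresponding to $W$, giving (2). For (3), once $K_X+D$ is $\mathbb{Q}$-Cartier any exceptional divisor $E$ over $X$ has a well-defined discrepancy, and through the adjunction $\pi^*(K_X+D)=K_{\bar X}+\bar T+\bar D$ it pulls back to a sum of exceptional divisors over $\bar X$ with matching discrepancies; the dlt hypothesis on $\bar X$ then delivers the strict inequality $a(E,X,D)>-1$ whenever the center of $E$ is not contained in the semi-snc locus of $(X,D)$.

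The main obstacle will be the $\mathbb{Q}$-Cartier descent in (2): being $\mathbb{Q}$-Cartier is not an $S_2$-local property on $X$ itself, so one cannot simply argue that a divisor $\mathbb{Q}$-Cartier in codimension two extends to one $\mathbb{Q}$-Cartier in codimension three. The workaround is to transplant the extension problem from $X$ onto $\bar T^n$, where the $S_2$-extension across codimension two is legitimate. A secondary subtlety in (3) is ensuring that no exceptional divisor whose center lies inside $W$ can spoil the semi-dlt condition; this is controlled by the fact that its image in $\bar X$ cannot have center in the non-snc locus of $(\bar X,\bar T+\bar D)$ without contradicting the dlt assumption on the normalization.
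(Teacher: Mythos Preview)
The paper does not supply its own proof of this proposition. It is quoted verbatim as Proposition~5.20 of \cite{MR3057950} (Koll\'ar, \emph{Singularities of the Minimal Model Program}) and used as a black box in the proof of Proposition~\ref{besemidlt}. There is therefore nothing in the present paper to compare your proposal against.

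For what it is worth, your sketch follows the gluing-theoretic strategy that Koll\'ar himself uses in Chapter~5 of \cite{MR3057950}: reconstruct the demi-normal pair from $(\bar X,\bar T+\bar D)$ together with the involution $\tau$ on the normalization of $\bar T$, read off normality of components from the dlt property of the normalization, and descend the $\mathbb{Q}$-Cartier condition via $\tau$-compatibility on $\bar T^n$ using $S_2$-extension across codimension two. That is indeed the shape of the argument in the reference, so if you want a detailed check you should consult Koll\'ar's book directly rather than this paper.
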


\section{The Quadrulple/Triple Moduli Spaces and the Minimal Model Program} 
\label{sec:5}
In this section we introduce the triple moduli spaces $\mathfrak{Z}_+,\mathfrak{Z}_{-}$ and the quadruple moduli space $\mathfrak{Y}$ which parameterize diagrams:

\begin{minipage}{0.5\linewidth}
  \begin{equation}
  \begin{tikzcd}
    &\mathcal{I}_n \\
    \mathcal{I}_{n+1} \ar[ru,hook,"x"] \ar[rd,hook,"y",swap] & \\
    & \mathcal{I}_n'
  \end{tikzcd}
\end{equation}
\end{minipage}
\begin{minipage}{0.5\linewidth}
  \begin{equation}
  \begin{tikzcd}
    \mathcal{I}_{n} \ar[rd,hook,"y"] & \\
    & \mathcal{I}_{n-1} \\
    \mathcal{I}_n' \ar[ru,hook,"x",swap] &
  \end{tikzcd}
\end{equation}
\end{minipage}

\begin{equation}
  \begin{tikzcd}
    &\mathcal{I}_n  \ar[rd,hook,"y"] & \\
    \mathcal{I}_{n+1} \ar[ru,hook,"x"] \ar[rd,hook,swap,"y"] & &   \mathcal{I}_{n-1}\\
    & \mathcal{I}_n' \ar[ru,hook,"x",swap] & 
  \end{tikzcd}
\end{equation}
respectively, of ideal sheaves where each successive inclusion is colength $1$ and supported at the point indicated on the diagrams. We consider line bundles $\mathcal{L}_{1},\mathcal{L}_{2},\mathcal{L}_{1}',\mathcal{L}_2'$ over triple/quadruple moduli spaces with fiber $\mathcal{I}_{n+1}/\mathcal{I}_{n},\mathcal{I}_n/\mathcal{I}_{n-1},\mathcal{I}_{n+1}/\mathcal{I}_{n}',\mathcal{I}_{n}'/\mathcal{I}_{n-1}$ respectively.

We consider the Cartesian diagram:
\begin{equation}
  \label{2eq:3.4}
  \begin{minipage}{0.4\linewidth}
  \begin{tikzcd}
    \mathfrak{Y} \ar{r}{\alpha_+} \ar{d}{\alpha_-} \ar{rd}{\theta}& \mathfrak{Z}_+ \ar{d}{\beta_+} \\
    \mathfrak{Z}_- \ar{r}{\beta_-} & S^{[n]}\times S^{[n]}\times S \times S
  \end{tikzcd}
\end{minipage}
\begin{minipage}{0.5\linewidth}
    \begin{tikzcd}
      (\mathcal{I}_{n-1},\mathcal{I}_n,\mathcal{I}_n',\mathcal{I}_{n+1},x,y) \ar{r}{\alpha_+} \ar{d}{\alpha_-} \ar{rd}{\theta}& (\mathcal{I}_n,\mathcal{I}_n',\mathcal{I}_{n+1},x,y)  \ar{d}{\beta_+} \\
      (\mathcal{I}_{n-1},\mathcal{I}_n,\mathcal{I}_n',x,y) \ar{r}{\beta_-} & (\mathcal{I}_n,\mathcal{I}_n',x,y).
    \end{tikzcd}
  \end{minipage}
\end{equation}
\cref{diagram4.1} is the restriction of \cref{2eq:3.4} to the diagonal $\Delta:S^{[n]}\times S\to S^{[n]}\times S^{[n]}\times S\times S$.

\begin{example}
  \label{example6.101}
    When $n=1$, then $\mathfrak{Y}=Bl_{\Delta_S}(S\times S)$,  where $\Delta_S:S\to S\times S$ is the diagonal embedding.  $\mathfrak{Z}_{-}=S\times S$ and $\mathfrak{Z}_+$ is induced by the Cartesian diagram
  \begin{equation}
    \begin{tikzcd}
      \mathfrak{Z}_{+}\ar{r}\ar{d} & Bl_{\Delta_S}(S\times S) \ar{d} \\
      Bl_{\Delta_S}(S\times S) \ar{r} & Bl_{\Delta_S}(S\times S)/\mathbb{Z}_{2}.
    \end{tikzcd}
  \end{equation}
  $\mathfrak{Z}_+$ has two irreducible components and each one is isomorphic to $Bl_{\Delta}(S\times S)$.
\end{example}

The main purpose of this section is to compute $R\alpha_{+*}\mathcal{O}_{\mathfrak{Y}}$ and $R\alpha_{-*}\mathcal{O}_{\mathfrak{Y}}$ explicitly.
\begin{proposition}
  \label{prop5.2}
  We have the formula
  \begin{equation}
R\alpha_{-*}\mathcal{O}_{\mathfrak{Y}}=\mathcal{O}_{\mathfrak{Z}_-}, \quad R\alpha_{+*}\mathcal{O}_{\mathfrak{Y}}=\mathcal{O}_{W_0}
  \end{equation}
  where $W_0$ will be defined in \cref{sec:5.3}.
\end{proposition}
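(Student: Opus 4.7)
The plan is to reduce both equalities to the fact that $\mathfrak{Z}_-$ and $W_0$ have rational singularities, combined with the recognition of $\alpha_-$ and an appropriate factorization of $\alpha_+$ as resolutions of singularities.

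First I would verify that $\mathfrak{Y}$ is smooth and irreducible. Following the strategy of \cref{1pr:2.4}, the idea is to realize $\mathfrak{Y}$ as an iterated projective bundle constructed over $S^{[n,n+1]}$: parameterizing the additional datum $\mathcal{I}_n'$ between $\mathcal{I}_{n+1}$ and $\mathcal{I}_{n-1}$ amounts to projectivizing a two-term complex of locally free sheaves over $S^{[n,n+1]}$, presenting $\mathfrak{Y}$ as smooth of the expected dimension. Irreducibility follows because the open locus $\{x\neq y\}$ is connected and dense.

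Next I would show that $\alpha_-$ and $\alpha_+|_{W_0}$ are birational. Over $\{x\neq y\}$ the colength-one inclusions at distinct points force $\mathcal{I}_n\neq \mathcal{I}_n'$, and the ideals $\mathcal{I}_{n-1}=\mathcal{I}_n\cap \mathcal{I}_n'$ (for $\alpha_-$) and $\mathcal{I}_{n+1}=\mathcal{I}_n\cdot \mathcal{I}_n'$ (for $\alpha_+$) are uniquely determined by the remaining data at distinct points. Hence both $\alpha_-$ and the induced map $\mathfrak{Y}\to W_0$ are isomorphisms over $\{x\neq y\}$. Moreover, since $\alpha_+$ sends the dense open $\{x\neq y\}\subset \mathfrak{Y}$ into the locus $\{\mathcal{I}_n\neq \mathcal{I}_n'\}\subset \mathfrak{Z}_+$, and its image is closed, $\alpha_+$ factors through the component $W_0$ rather than $W_1=S^{[n,n+1]}=\{\mathcal{I}_n=\mathcal{I}_n'\}$ (see \cref{example6.101} for the $n=1$ toy case).

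With the two resolutions in hand, the conclusion is formal: by \cref{above} the varieties $\mathfrak{Z}_-$ and $W_0$ have canonical singularities, hence rational singularities by \cref{thm:5.22}. Applying \cref{rational} to the resolutions $\mathfrak{Y}\to \mathfrak{Z}_-$ and $\mathfrak{Y}\to W_0$ yields $R\alpha_{-*}\mathcal{O}_{\mathfrak{Y}}=\mathcal{O}_{\mathfrak{Z}_-}$ and, pushing forward along the closed immersion $W_0\hookrightarrow \mathfrak{Z}_+$, $R\alpha_{+*}\mathcal{O}_{\mathfrak{Y}}=\mathcal{O}_{W_0}$. The genuine difficulty here is not this reduction but the canonical singularity statement \cref{above} itself, which requires explicit local models of $\mathfrak{Z}_-$ and $W_0$ together with a discrepancy computation via \cref{blowup}; this is the substantive content of the remainder of \cref{sec:5}.
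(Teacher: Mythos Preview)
Your proposal is correct and takes essentially the same route as the paper: the two equalities are deduced from \cref{above} (established in the paper as \cref{1pr:3.1}, \cref{lem:dlt}, and \cref{besemidlt}) via \cref{thm:5.22} and the definition of rational singularities, exactly as you outline. One minor correction: in your description of the birational inverses over $\{x\neq y\}$ you have swapped the roles of $\alpha_\pm$ --- it is $\alpha_-$ that forgets $\mathcal{I}_{n+1}$ (recovered as $\mathcal{I}_n\cap\mathcal{I}_n'$) and $\alpha_+$ that forgets $\mathcal{I}_{n-1}$ (recovered as the sum $\mathcal{I}_n+\mathcal{I}_n'$, not the product) --- but this does not affect the argument.
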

  \cref{prop5.2} follows from \cref{5cor1} and \cref{5cor2}, which will be proved later in this section.

\subsection{The Geometry of $\mathfrak{Y}$}

  \begin{theorem}[Proposition 2.28 and Proposition 5.28 of \cite{neguct2018hecke}]
  \label{2pr:3.1}
  The scheme $\mathfrak{Y}$ is smooth of dimension $2n+2$. The closed embedding:
  $$\Delta_{\mathfrak{Y}}:S^{[n-1,n,n+1]}\to \mathfrak{Y} \quad (\mathcal{I}_{n-1},\mathcal{I}_n,\mathcal{I}_{n+1},x)\to (\mathcal{I}_{n-1},\mathcal{I}_n,\mathcal{I}_n,\mathcal{I}_{n+1},x,x)$$
 is a regular closed subscheme of codimension $1$.  If we abuse the notation to denote  $S^{[n-1,n,n+1]}$ by $\Delta_{\mathfrak{Y}}$, then  the morphism of coherent sheaves over $\mathfrak{Y}\times S$:
$$\mathcal{I}_n/\mathcal{I}_{n+1}\to \mathcal{I}_{n-1}/\mathcal{I}_{n+1}\to \mathcal{I}_{n-1}/\mathcal{I}_{n}', \quad \mathcal{I}_n'/\mathcal{I}_{n+1}\to \mathcal{I}_{n-1}/\mathcal{I}_{n+1}\to \mathcal{I}_{n-1}/\mathcal{I}_{n}$$
induce short exact sequences
\begin{align*}
  0\to \mathcal{L}_1\to \mathcal{L}_2' \to \mathcal{L}_2'\mathcal{O}_{\Delta_\mathfrak{Y}}\to 0 \\
  0\to \mathcal{L}_1'\to \mathcal{L}_2 \to \mathcal{L}_2\mathcal{O}_{\Delta_\mathfrak{Y}}\to 0.
\end{align*}
Moreover, $\mathcal{L}_1\mathcal{L}_2'^{-1}=\mathcal{L}_1'\mathcal{L}_2^{-1}=\mathcal{O}(-\Delta_{\mathfrak{Y}})$. The normal bundle $N_{\mathfrak{Y}/\Delta_{\mathfrak{Y}}}=\mathcal{L}_1^{-1}\mathcal{L}_2$.
\end{theorem}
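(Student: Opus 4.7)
My plan is to follow the strategy of \cite{neguct2018hecke} in four stages.

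First (smoothness and dimension), I would realize $\mathfrak{Y}$ as an iterated projectivization of two-term complexes of locally free sheaves in the sense of \cref{projectivization}, with intermediate bases $S^{[n,n+1]}$ and $S^{[n-1,n,n+1]}$ that are already known to be smooth by \cref{1pr:1} and \cref{1pr:2.4}. Concretely, I would project $\mathfrak{Y}\to S^{[n,n+1]}$ by remembering $(\mathcal{I}_n,\mathcal{I}_{n+1},x)$, control the data $(\mathcal{I}_{n-1},y)$ through the projectivization of \cref{1pr:2.4}, and finally extract $\mathcal{I}_n'$ as another projectivization of a two-term complex over $S^{[n-1,n,n+1]}$. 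Tallying fiber dimensions through the tower will give $\dim\mathfrak{Y}=2n+2$.

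Second (divisor structure), the diagonal locus $\Delta_\mathfrak{Y}=\{\mathcal{I}_n=\mathcal{I}_n',\ x=y\}$ emerges in this tower as the vanishing locus of a tautological section of a natural line bundle on $\mathfrak{Y}$, making $\Delta_\mathfrak{Y}$ an effective Cartier divisor. Since $S^{[n-1,n,n+1]}$ is smooth of dimension $2n+1$, the embedding $\Delta_\mathfrak{Y}\hookrightarrow\mathfrak{Y}$ is regular of codimension one.

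Third (short exact sequences and normal bundle), I would observe that the two filtrations $\mathcal{I}_{n+1}\subset\mathcal{I}_n\subset\mathcal{I}_{n-1}$ and $\mathcal{I}_{n+1}\subset\mathcal{I}_n'\subset\mathcal{I}_{n-1}$ of the common length-two sheaf $\mathcal{I}_{n-1}/\mathcal{I}_{n+1}$ on $\mathfrak{Y}\times S$ furnish canonical identifications (after being traced through the projectivization picture) $\mathcal{L}_1\cong\mathcal{L}_2'\otimes\mathcal{O}(-\Delta_\mathfrak{Y})$ and $\mathcal{L}_1'\cong\mathcal{L}_2\otimes\mathcal{O}(-\Delta_\mathfrak{Y})$. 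The two claimed short exact sequences are then the twists by $\mathcal{L}_2'$ and $\mathcal{L}_2$ of the standard divisor sequence $0\to\mathcal{O}(-\Delta_\mathfrak{Y})\to\mathcal{O}_\mathfrak{Y}\to\mathcal{O}_{\Delta_\mathfrak{Y}}\to 0$. The normal bundle formula follows immediately: $N_{\mathfrak{Y}/\Delta_\mathfrak{Y}}=\mathcal{O}(\Delta_\mathfrak{Y})|_{\Delta_\mathfrak{Y}}=\mathcal{L}_1^{-1}\mathcal{L}_2'|_{\Delta_\mathfrak{Y}}=\mathcal{L}_1^{-1}\mathcal{L}_2$, using $\mathcal{L}_2'|_{\Delta_\mathfrak{Y}}=\mathcal{L}_2$ since $\mathcal{I}_n=\mathcal{I}_n'$ along $\Delta_\mathfrak{Y}$.

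The hard part is the first stage: identifying the appropriate two-term complex that governs the choice of $\mathcal{I}_n'$ over $S^{[n-1,n,n+1]}$ and checking the Koszul resolution hypothesis of \cref{projectivization} requires a careful local analysis because the relevant ideal sheaves are not locally free. Once the projectivization tower is in place, the remaining three stages are essentially formal consequences of the divisor structure it produces.
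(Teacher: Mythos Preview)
The paper does not prove this statement at all: it is imported wholesale from \cite{neguct2018hecke} (Propositions 2.28 and 5.28), so there is no in-paper argument to compare against. Your outline is a faithful reconstruction of the strategy in that reference, and the four stages you describe are the right ones.

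One small reorientation would make your third stage cleaner and more in line with how the statement is phrased. You propose to extract the identification $\mathcal{L}_1\cong\mathcal{L}_2'\otimes\mathcal{O}(-\Delta_{\mathfrak{Y}})$ from the projectivization tower and then twist the divisor sequence. But the statement itself hands you the map: the composition $\mathcal{I}_n/\mathcal{I}_{n+1}\to\mathcal{I}_{n-1}/\mathcal{I}_{n+1}\to\mathcal{I}_{n-1}/\mathcal{I}_n'$ is, after pushing down along $\mathfrak{Y}\times S\to\mathfrak{Y}$, a morphism of line bundles $\mathcal{L}_1\to\mathcal{L}_2'$ which vanishes precisely where $\mathcal{I}_n=\mathcal{I}_n'$ and $x=y$, i.e.\ on $\Delta_{\mathfrak{Y}}$. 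This directly gives a section of $\mathcal{L}_1^{-1}\mathcal{L}_2'$ cutting out $\Delta_{\mathfrak{Y}}$, hence $\mathcal{L}_1\mathcal{L}_2'^{-1}\cong\mathcal{O}(-\Delta_{\mathfrak{Y}})$ and the short exact sequence simultaneously. The only thing that still requires the smoothness/dimension count from stage one is knowing that this section vanishes to order exactly one, which is immediate once you know $\Delta_{\mathfrak{Y}}$ is a smooth divisor in the smooth variety $\mathfrak{Y}$.
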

\begin{remark}
  From now on we will abuse the notation to denote $S^{[n-1,n,n+1]}$ by $\Delta_{\mathfrak{Y}}$.
\end{remark}

\begin{lemma}[Claim 3.8 of \cite{negut2017shuffle}]
  \label{claim3.8}
  Let $U$ be the complement of $\Delta$. Then $\alpha_+$ and $\alpha_-$ in  \cref{2eq:3.4} are isomorphisms when restricting to $U$.
\end{lemma}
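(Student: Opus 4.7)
The plan is to exhibit an explicit inverse morphism to each of $\alpha_+|_U$ and $\alpha_-|_U$, by reconstructing the ``missing'' ideal sheaf as a sum or intersection of the two middle sheaves. The point is that off the diagonal $\Delta$, the two middle ideals $\mathcal{I}_n$ and $\mathcal{I}_n'$ are distinct, so that their colength-one quotients over $\mathcal{I}_{n+1}$ (respectively, inside $\mathcal{I}_{n-1}$) determine disjoint infinitesimal directions, and sum or intersection produces ideal sheaves of the correct colength fitting into the quadruple diagram uniquely.

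For $\alpha_+|_U$ I would proceed as follows. Given a $T$-valued point $(\mathcal{I}_n, \mathcal{I}_n', \mathcal{I}_{n+1}, x, y)$ of $\mathfrak{Z}_+|_U$, the condition $\mathcal{I}_n \neq \mathcal{I}_n'$ forces the two line quotients $\mathcal{I}_n/\mathcal{I}_{n+1}$ and $\mathcal{I}_n'/\mathcal{I}_{n+1}$ to be distinct subsheaves of $\mathcal{O}/\mathcal{I}_{n+1}$, so that $\mathcal{I}_n \cap \mathcal{I}_n' = \mathcal{I}_{n+1}$ and the sum
\begin{equation*}
  \mathcal{I}_{n-1} := \mathcal{I}_n + \mathcal{I}_n' \subset \mathcal{O}_{T \times S}
\end{equation*}
fits into the short exact sequence
\begin{equation*}
  0 \to \mathcal{I}_{n+1} \to \mathcal{I}_n \oplus \mathcal{I}_n' \to \mathcal{I}_{n-1} \to 0,
\end{equation*}
exhibiting $\mathcal{I}_{n-1}$ as a flat family of colength-$(n-1)$ ideals with $\mathcal{I}_{n-1}/\mathcal{I}_n \cong \mathcal{I}_n'/\mathcal{I}_{n+1} = k_y$ and $\mathcal{I}_{n-1}/\mathcal{I}_n' \cong \mathcal{I}_n/\mathcal{I}_{n+1} = k_x$. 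The assignment $(\mathcal{I}_n, \mathcal{I}_n', \mathcal{I}_{n+1}, x, y) \mapsto (\mathcal{I}_{n-1}, \mathcal{I}_n, \mathcal{I}_n', \mathcal{I}_{n+1}, x, y)$ is functorial in $T$, hence defines a morphism $\mathfrak{Z}_+|_U \to \mathfrak{Y}|_U$ which is two-sided inverse to $\alpha_+|_U$ by uniqueness of the sum. For $\alpha_-|_U$ the construction is formally dual: given $(\mathcal{I}_{n-1}, \mathcal{I}_n, \mathcal{I}_n', x, y) \in \mathfrak{Z}_-|_U$, set $\mathcal{I}_{n+1} := \mathcal{I}_n \cap \mathcal{I}_n'$, and use the analogous exact sequence to read off colength $n+1$, flatness, and the required quotient structure, yielding the inverse morphism $\mathfrak{Z}_-|_U \to \mathfrak{Y}|_U$.

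The step I expect to require the most care is upgrading these pointwise prescriptions to morphisms of schemes, i.e.\ verifying flatness of $\mathcal{I}_n + \mathcal{I}_n'$ and $\mathcal{I}_n \cap \mathcal{I}_n'$ over $U$. Since $\mathcal{I}_n$, $\mathcal{I}_n'$, and $\mathcal{I}_{n+1}$ are all flat over the base, the displayed four-term exact sequence immediately yields flatness of the constructed sheaf and the correct Hilbert polynomial on fibers, provided the sequence remains exact after arbitrary base change. Exactness reduces, via Nakayama, to the fiberwise statement $\mathcal{I}_n \cap \mathcal{I}_n' = \mathcal{I}_{n+1}$, which is precisely the disjointness of the subsheaves $\mathcal{I}_n/\mathcal{I}_{n+1}$ and $\mathcal{I}_n'/\mathcal{I}_{n+1}$ in $\mathcal{O}/\mathcal{I}_{n+1}$ guaranteed by being outside $\Delta$. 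Once this verification is complete, the two inverse morphisms give the asserted isomorphisms; as a consistency check one may note that $\alpha_\pm|_U$ is proper and set-theoretically bijective onto its image, so by smoothness of $\mathfrak{Y}$ from \cref{2pr:3.1} and Zariski's main theorem the isomorphism statement is forced, but the explicit inverse above is cleaner and will be reused later.
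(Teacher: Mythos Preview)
Your argument is correct and is the standard proof of this fact. Note that the paper does not actually supply its own proof: the lemma is stated with a citation to Claim~3.8 of \cite{negut2017shuffle} and is used as a black box, so there is nothing to compare your approach against within this paper. The construction you give, recovering $\mathcal{I}_{n-1}$ as $\mathcal{I}_n+\mathcal{I}_n'$ and $\mathcal{I}_{n+1}$ as $\mathcal{I}_n\cap\mathcal{I}_n'$, is exactly the argument one finds in the cited reference.

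One small remark on your flatness discussion: the antidiagonal $\mathcal{I}_{n+1}\hookrightarrow\mathcal{I}_n\oplus\mathcal{I}_n'$ is already fiberwise injective regardless of whether one is over $U$, since each factor inclusion is injective on fibers by flatness; so the cokernel is automatically $T$-flat. The role of the hypothesis $\mathcal{I}_n\neq\mathcal{I}_n'$ is rather to ensure that this cokernel injects into $\mathcal{O}_{T\times S}$ (equivalently, that the fiberwise intersection is exactly $\mathcal{I}_{n+1}$ and not larger), so that it really defines a point of $S^{[n-1]}$. This is a cosmetic rearrangement of what you wrote, not a gap.
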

\subsection{The Geometry of $\mathfrak{Z}_-$}

\begin{proposition}
  \label{1pr:3.1}
  The scheme $\mathfrak{Z}_-$ is an irreducible $2n+2$ dimensional locally complete intersection scheme and is a canonical singularity.
\end{proposition}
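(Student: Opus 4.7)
The plan is to use $\alpha_{-}\colon \mathfrak{Y}\to\mathfrak{Z}_{-}$ from \cref{2eq:3.4} as a resolution of singularities and to compute the discrepancy of its exceptional divisor explicitly. By \cref{2pr:3.1}, $\mathfrak{Y}$ is smooth irreducible of dimension $2n+2$, and by \cref{claim3.8}, $\alpha_{-}$ is birational, restricting to an isomorphism on the complement of the Cartier divisor $\Delta_{\mathfrak{Y}}\cong S^{[n-1,n,n+1]}$. Since $\alpha_{-}$ is proper and surjective (every triple $(\mathcal{I}_{n-1},\mathcal{I}_n,\mathcal{I}_n',x,y)\in\mathfrak{Z}_{-}$ admits some extension $\mathcal{I}_{n+1}\subset\mathcal{I}_n\cap\mathcal{I}_n'$ of colength one at $x$ and $y$), $\mathfrak{Z}_{-}$ is irreducible of dimension $2n+2$. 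Its singular locus lies in $\alpha_{-}(\Delta_{\mathfrak{Y}})$, which has dimension at most $2n$, so $\mathfrak{Z}_{-}$ is automatically regular in codimension one.

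For the local complete intersection property, I would identify $\mathfrak{Z}_{-}$ with the fiber product $S^{[n-1,n]} \times_{S^{[n-1]}} S^{[n-1,n]}$ via the common ideal sheaf $\mathcal{I}_{n-1}$, realizing it as the preimage of the diagonal $\Delta_{S^{[n-1]}}\hookrightarrow S^{[n-1]}\times S^{[n-1]}$ under $(p_{n-1}^{-},p_{n-1}^{-})\colon S^{[n-1,n]} \times S^{[n-1,n]} \to S^{[n-1]}\times S^{[n-1]}$. The diagonal is a regular embedding of codimension $2n-2$ into a smooth target, and $\mathfrak{Z}_{-}$ is irreducible of matching codimension $2n-2$ in the smooth $4n$-dimensional product $S^{[n-1,n]} \times S^{[n-1,n]}$. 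By Krull's height theorem the pulled-back $2n-2$ local equations have the expected height, hence (by Cohen--Macaulayness of the ambient) form a regular sequence locally; so $\mathfrak{Z}_{-}$ is a local complete intersection, and combined with the codimension-one regularity it is Gorenstein and normal.

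For the canonical singularity condition, the exceptional divisor of $\alpha_{-}$ is exactly the prime divisor $\Delta_{\mathfrak{Y}}$, so there is a unique integer $a$ with
\[
  \omega_{\mathfrak{Y}} \;\cong\; \alpha_{-}^{*}\omega_{\mathfrak{Z}_{-}}\otimes \mathcal{O}_{\mathfrak{Y}}(a\,\Delta_{\mathfrak{Y}}),
\]
and canonicity amounts to $a\geq 0$. I would compute $\omega_{\mathfrak{Z}_{-}}$ by adjunction in the lci embedding of the previous paragraph, giving
\[
  \omega_{\mathfrak{Z}_{-}} \;\cong\; \bigl(\omega_{S^{[n-1,n]}}\boxtimes \omega_{S^{[n-1,n]}}\bigr)\big|_{\mathfrak{Z}_{-}}\otimes \omega_{S^{[n-1]}}^{-1}\big|_{\mathfrak{Z}_{-}},
\]
and compute $\omega_{\mathfrak{Y}}$ from an iterated projectivization description of $\mathfrak{Y}$ analogous to \cref{1pr:1,1pr:2.4}. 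Combined with the normal bundle $N_{\mathfrak{Y}/\Delta_{\mathfrak{Y}}} = \mathcal{L}_{1}^{-1}\mathcal{L}_{2}$ from \cref{2pr:3.1}, restricting both sides of the discrepancy equation to $\Delta_{\mathfrak{Y}}$ pins down $a$; the case $n=1$ of \cref{example6.101}, where $\alpha_{-}$ is literally the blow up of the codimension-$2$ diagonal $\Delta_{S}\hookrightarrow S\times S$, already gives $a=1$ via \cref{blowup} and strongly suggests the same value in general.

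The main obstacle is carrying out this discrepancy calculation cleanly for general $n$: the projectivization and fiber-product descriptions involve several tautological line bundles whose pullbacks and restrictions must be tracked consistently across the diagram, and one must verify the candidate value of $a$ from a single uniform computation. A shortcut would be to show étale-locally near a generic point of $\Delta_{\mathfrak{Y}}$ that $\alpha_{-}$ is modeled by the blow up of the smooth codimension-$2$ subvariety $\beta_{-}^{-1}(\Delta)\cong S^{[n-1,n]}$ of $\mathfrak{Z}_{-}$; then \cref{blowup} applied at the generic point of the exceptional divisor yields $a=1$ from only a tangent-space verification, bypassing the global line-bundle bookkeeping.
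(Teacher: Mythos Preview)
Your arguments for irreducibility, dimension, $R_1$, and the lci/normality step via the fiber product $S^{[n-1,n]}\times_{S^{[n-1]}}S^{[n-1,n]}$ match the paper exactly. For the discrepancy, however, the paper does \emph{not} attempt your approach (a); it takes precisely what you call the ``shortcut'' and makes it completely explicit at the Zariski level. Namely, it passes to the open locus $V_2\subset\mathfrak{Z}_-$ where neither $x$ nor $y$ meets the support of $\mathcal{O}/\mathcal{I}_{n-1}$, and $V_1=\alpha_-^{-1}(V_2)\subset\mathfrak{Y}$; on these open sets one has a Cartesian square identifying $\alpha_-|_{V_1}$ with (an open piece of $S^{[n-1]}$) times the map $\mathfrak{Y}_1=Bl_{\Delta_S}(S\times S)\to S\times S$ of \cref{example6.101}. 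Since $V_1$ meets $\Delta_{\mathfrak{Y}}$ in a dense open set, \cref{blowup} applied to this codimension-$2$ blow-up gives discrepancy $1$ immediately. Your global route (a) would in principle succeed, but the ``iterated projectivization description of $\mathfrak{Y}$'' you invoke is not actually provided in this paper---only $S^{[n,n+1]}$ and $S^{[n-1,n,n+1]}$ are so described---so you would need to import that from \cite{neguct2018hecke} and track several more line bundles; the paper's reduction-to-$n{=}1$ trick sidesteps all of that.
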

\begin{proof}
  By \cref{claim3.8}, $\alpha^{-1}_-(U)\cong \theta^{-1}(U)$ is a $2n+2$ dimensional smooth open subscheme and the complement is the $2n$ dimensional closed subscheme $S^{[n-1,n]}$ by \cref{2eq:3.4}. Hence $\mathfrak{Z}_-$ is a $2n+2$ dimensional $R_1$ scheme in the sense of Serre's condition.

  On the other hand, $$\mathfrak{Z}_{-}=S^{[n-1,n]}\times_{S^{[n-1]}}S^{[n-1,n]}$$
  has the expected dimension also equal to $2n+2$.  Thus $\mathfrak{Z}_-$ is a locally complete intersection scheme and thus is Cohen-Macaulay and normal.

  Now we prove that $\mathfrak{Z}_{-}$ is a canonical singularity. The complement of $\theta^{-1}(U)$ in $\mathfrak{Y}$ is $\Delta_{\mathfrak{Y}}=S^{[n-1,n,n+1]}$ and thus there exists $a\in \mathbb{Q}$ such that
  $$\alpha_-^*K_{\mathfrak{Z}_-}=K_{\mathfrak{Y}}+a\mathcal{O}(\Delta_{\mathfrak{Y}})$$

  We denote $\mathfrak{Y}_{1}$ the quadruple moduli space $\mathfrak{Y}$ when $n=1$. Let
  $$V_{1}:=\{(\mathcal{I}_{n-1},(\mathcal{I}_{1},\mathcal{I}_{1}',\mathcal{I}_2,x,y))\in S^{[n-1]}\times \mathfrak{Y}_{1}|(\mathcal{I}_{n-1},x)\notin \mathcal{Z}_{n-1} \text{ and } (\mathcal{I}_{n-1},y)\notin \mathcal{Z}_{n-1}\}$$
  where $\mathcal{Z}_{n-1}$ is the universal closed subscheme of $S^{[n-1]}\times S$. The morphism
  $$(\mathcal{I}_{n-1},(\mathcal{I}_{1},\mathcal{I}_{1}',\mathcal{I}_2,x,y))\to (\mathcal{I}_{n-1},\mathcal{I}_{n-1}\cap \mathcal{I}_1,\mathcal{I}_{n-1}\cap \mathcal{I}_{1}',\mathcal{I}_{n-1}\cap \mathcal{I}_{2},x,y)$$
  induces an open embedding $V_1\subset \mathfrak{Y}$.

  We define
  $$V_2:=\{(\mathcal{I}_{n-1},x,y)\in S^{[n]}\times S\times S|(\mathcal{I}_{n-1},x)\notin \mathcal{Z}_{n-1} \text{ and } (\mathcal{I}_{n-1},y)\notin \mathcal{Z}_{n-1}\}, $$
  and the morphism
  $$(\mathcal{I}_{n-1},x,y)\to (\mathcal{I}_{n-1},\mathcal{I}_{n-1}\cap \mathcal{I}_{x},\mathcal{I}_{n-1}\cap \mathcal{I}_{y})$$
  induces an open embedding $V_2\subset \mathfrak{Y}_{-}$. $\alpha_{-}^{-1}(V_2)=V_1$ and we have the Cartesian diagram:
  \begin{equation*}
    \begin{tikzcd}
      V_1 \ar{r}\ar{d} & \mathfrak{Y}_{1} \ar{d} \\
      V_2 \ar{r} & S\times S
    \end{tikzcd}
  \end{equation*}
  where all horizontal morphisms are natural projections. By \cref{example6.101}, $\mathfrak{Y}_{2}=Bl_{\Delta_{S}}(S\times S)$. Thus by \cref{blowup}, $a=1$ and $\mathfrak{Z}_-$ is a canonical singularity.
\end{proof}

\begin{corollary}
  \label{5cor1}
  We have the formula
  $$R\alpha_{-*}(\mathcal{O}_{\mathfrak{Y}})=\mathcal{O}_{\mathfrak{Z}_{-}}.$$
\end{corollary}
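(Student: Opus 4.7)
The plan is to recognize $\alpha_-$ as a resolution of singularities of $\mathfrak{Z}_-$ and then invoke the higher direct image vanishing that characterizes rational singularities. Essentially all the work has already been done in \cref{1pr:3.1}, so the proof reduces to a formal consequence.

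First I would verify that $\alpha_-\colon \mathfrak{Y}\to \mathfrak{Z}_-$ is a resolution of singularities. By \cref{2pr:3.1}, $\mathfrak{Y}$ is smooth of dimension $2n+2$. By \cref{claim3.8}, $\alpha_-$ restricts to an isomorphism over the open locus $U\subset \mathfrak{Z}_-$ where the two marked points are distinct. Since \cref{1pr:3.1} asserts that $\mathfrak{Z}_-$ is irreducible of dimension $2n+2$, and $\alpha_-$ is proper (being a base change of the proper map $\beta_+$), it follows that $\alpha_-$ is a proper birational morphism from a smooth variety, i.e.\ a resolution of singularities.

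Next I would apply \cref{1pr:3.1} to conclude that $\mathfrak{Z}_-$ is a canonical singularity, and then apply \cref{thm:5.22} to conclude that $\mathfrak{Z}_-$ is a rational singularity over the characteristic zero field $k=\mathbb{C}$.

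Finally, by the remark following \cref{rational}, any resolution of singularities $f\colon Y\to X$ of a rational singularity $X$ satisfies $f_*\mathcal{O}_Y=\mathcal{O}_X$ and $R^if_*\mathcal{O}_Y=0$ for all $i>0$. Applied to $\alpha_-\colon \mathfrak{Y}\to\mathfrak{Z}_-$, this yields the derived category equality
\[
R\alpha_{-*}\mathcal{O}_{\mathfrak{Y}}=\mathcal{O}_{\mathfrak{Z}_-}.
\]
There is no genuine obstacle in this corollary; the main technical content was absorbed into establishing the canonical singularity property of $\mathfrak{Z}_-$ via the explicit discrepancy computation carried out through the local model $V_1\subset \mathfrak{Y}$ in the proof of \cref{1pr:3.1}.
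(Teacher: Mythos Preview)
Your proof is correct and follows exactly the approach the paper intends: the corollary is stated without proof because it is immediate from \cref{1pr:3.1} via \cref{thm:5.22} and the definition of rational singularities. You have simply made explicit the chain of implications (canonical $\Rightarrow$ rational $\Rightarrow$ $R\alpha_{-*}\mathcal{O}_{\mathfrak{Y}}=\mathcal{O}_{\mathfrak{Z}_-}$) together with the observation that $\alpha_-$ is a resolution, which is precisely what the paper leaves tacit.
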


\subsection{The Geometry of $\mathfrak{Y}_+$}
\label{sec:5.3}
The geometry of $\mathfrak{Y}_{+}$ is more complicated, as it is no longer irreducible. We define $W_0$ as a closed subscheme of $\mathfrak{Z}_{+}$ by
$$W_0:=\{(\mathcal{I}_n,\mathcal{I}_{n}',\mathcal{I}_{n+1},x,y)\in \mathfrak{Z}_+|(\mathcal{I}_{n}',y)\in \mathcal{Z}_{n} \text{ and } (\mathcal{I}_{n},x)\in \mathcal{Z}_{n}\}.$$
$W_0$ is the closure of  $\beta_+^{-1}(U)$ in $\mathfrak{Z}_{+}$. We define $W_{1}:=S^{[n,n+1]}$.

\begin{proposition}
  \label{2pr:structure}
  The scheme $\mathfrak{Z}_+$ is a locally complete intersection scheme (and hence Cohen-Macaulay) of dimension $2n+2$, with two irreducible components $W_0$ and $W_1$. $W_0\cap W_1=p_n^{-1}\mathcal{Z}_n$. 
\end{proposition}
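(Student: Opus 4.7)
The plan is to realize $\mathfrak{Z}_+$ as a fiber product of smooth schemes over a smooth base, extract the local complete intersection property from the regularity of the diagonal, and identify the two irreducible components through the resolution $\alpha_+ : \mathfrak{Y} \to \mathfrak{Z}_+$ together with the diagonal embedding of $S^{[n,n+1]}$.

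\textbf{Step 1 (fiber product and lower bound on dimension).} I would first identify
$$\mathfrak{Z}_+ \cong S^{[n,n+1]} \times_{S^{[n+1]}} S^{[n,n+1]},$$
where both morphisms to $S^{[n+1]}$ are $p_n^-$. By \cref{1pr:1} each factor is smooth of dimension $2n+2$, and $S^{[n+1]}$ itself is smooth of dimension $2n+2$, so the diagonal $\Delta_{S^{[n+1]}} \hookrightarrow S^{[n+1]} \times S^{[n+1]}$ is a regular embedding of codimension $2n+2$. Pulling back along $(p_n^-, p_n^-)$ realizes $\mathfrak{Z}_+$ as the zero locus of $2n+2$ equations inside the smooth $(4n+4)$-dimensional scheme $S^{[n,n+1]} \times S^{[n,n+1]}$. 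By Krull's principal ideal theorem every irreducible component of $\mathfrak{Z}_+$ has dimension at least $2n+2$, and showing equality on every component will simultaneously yield the l.c.i.\ and Cohen--Macaulay statements.

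\textbf{Step 2 (the two candidate components).} Next I would produce $W_1$ and $W_0$ as irreducible closed subschemes of $\mathfrak{Z}_+$ of dimension $2n+2$. The diagonal embedding $W_1 := S^{[n,n+1]} \hookrightarrow \mathfrak{Z}_+$ given by $(\mathcal{I}_n, \mathcal{I}_{n+1}, x) \mapsto (\mathcal{I}_n, \mathcal{I}_n, \mathcal{I}_{n+1}, x, x)$ is smooth and irreducible of dimension $2n+2$ by \cref{1pr:1}. For $W_0$, I would invoke \cref{claim3.8}: $\alpha_+$ restricts to an isomorphism $\theta^{-1}(U) \xrightarrow{\sim} \beta_+^{-1}(U)$, where $U$ is the complement of the diagonal. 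Combined with the smoothness and irreducibility of $\mathfrak{Y}$ of dimension $2n+2$ from \cref{2pr:3.1}, this exhibits $\beta_+^{-1}(U)$ as an irreducible open subscheme of $\mathfrak{Z}_+$ of dimension $2n+2$, and its scheme-theoretic closure $W_0$ is an irreducible closed subscheme of the same dimension.

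\textbf{Step 3 (decomposition and l.c.i.).} I would then verify $\mathfrak{Z}_+ = W_0 \cup W_1$ set-theoretically. The preimage $\beta_+^{-1}(\Delta)$ has underlying set $W_1$: imposing $\mathcal{I}_n = \mathcal{I}_n'$ and $x = y$ reduces a quintuple in $\mathfrak{Z}_+$ to the data $(\mathcal{I}_n, \mathcal{I}_{n+1}, x) \in S^{[n,n+1]}$. Its complement $\beta_+^{-1}(U)$ lies in $W_0$ by construction, so every point of $\mathfrak{Z}_+$ belongs to $W_0 \cup W_1$. Together with Steps 1 and 2, this forces every irreducible component to have dimension exactly $2n+2$, giving the l.c.i.\ and Cohen--Macaulay conclusions as well as the identification of the components.

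\textbf{Step 4 (the intersection and main obstacle).} For $W_0 \cap W_1 = p_n^{-1}\mathcal{Z}_n$, the set-theoretic equality follows by identifying which diagonal points $(\mathcal{I}_n, \mathcal{I}_n, \mathcal{I}_{n+1}, x, x) \in W_1$ arise as limits of off-diagonal configurations: such a limit exists precisely when $\mathcal{I}_{n+1}$ admits two distinct colength-$1$ supers with quotient $k_x$, which forces $(\mathcal{O}/\mathcal{I}_{n+1})_x$ to have length at least $2$, equivalently $(\mathcal{I}_n, x) \in \mathcal{Z}_n$. The principal obstacle is promoting this to a scheme-theoretic identity of ideal sheaves; I would address it by a local computation combining the l.c.i.\ equations from Step 1 with the explicit presentation of $\mathcal{Z}_n$ from \cref{eq:Zn} and the Koszul-type description of nested Hilbert schemes in \cref{projectivization}. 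A secondary subtlety is reconciling the two characterizations of $W_0$ (closure of $\beta_+^{-1}(U)$ versus the locus defined by conditions on marked points in the definition preceding the proposition), which amounts to tracking supports of the universal quotient sheaves along degenerations.
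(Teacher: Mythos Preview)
Your outline is correct and matches the paper's argument almost exactly through Steps 1--3: the fiber product presentation, the dimension count via \cref{claim3.8} and \cref{2pr:3.1}, and the set-theoretic decomposition $\mathfrak{Z}_+ = W_0 \cup W_1$ are precisely what the paper does, in the same order.

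The only substantive divergence is in Step~4. You take $W_0$ to be the closure of $\beta_+^{-1}(U)$ and then regard the scheme-theoretic identity $W_0\cap W_1=p_n^{-1}\mathcal{Z}_n$ as the ``principal obstacle'', to be handled by a local Koszul computation. The paper avoids this entirely by \emph{defining} $W_0$ scheme-theoretically as the locus in $\mathfrak{Z}_+$ cut out by the two conditions $(\mathcal{I}_n',y)\in\mathcal{Z}_n$ and $(\mathcal{I}_n,x)\in\mathcal{Z}_n$ (and then asserting, separately, that this coincides with the closure of $\beta_+^{-1}(U)$). With that definition the intersection is immediate: on $W_1$ one has $\mathcal{I}_n=\mathcal{I}_n'$ and $x=y$, so the defining conditions collapse to the single condition $(\mathcal{I}_n,x)\in\mathcal{Z}_n$, which is exactly $p_n^{-1}\mathcal{Z}_n$. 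The paper's proof of this step is one sentence. What you flag as a ``secondary subtlety'' (reconciling the closure and condition-based descriptions of $W_0$) is the genuine content that the paper leaves implicit; your route front-loads that subtlety, the paper's route back-loads it, but neither is wrong.
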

\begin{proof}
  $W_1$ is $2n+2$ dimensional and the complement of $W_1$ in $\mathfrak{Z}_+$ is $\beta_+^{-1}(U)=\theta^{-1}(U)$, which is also $2n+2$ dimensional by \cref{claim3.8}. Thus $\mathfrak{Z}_+$ is $2n+2$ dimensional and
  $$\mathfrak{Z}_{+}=S^{[n,n+1]}\times_{S^{[n+1]}}S^{[n,n+1]}$$
  has expected dimension also equal to $2n+2$. Hence $\mathfrak{Z}_+$ is a locally complete intersection scheme.

Any closed point of $W_{0}\cap W_1$ corresponds to $(\mathcal{I}_{n},\mathcal{I}_{n+1},x)\in S^{[n,n+1]}$ such that $x$ has length $\geq 1$ in $\mathcal{O}/\mathcal{I}_n$ and thus is in $p_n^{-1}\mathcal{Z}_n$.
\end{proof}
\begin{example}
  \label{example6.9}
  When $n=2$, by \cref{example:2.5}, $\mathcal{Z}_2=S^{[1,2]}$. Thus for any point
  $(\mathcal{I}_2',\mathcal{I}_2,\mathcal{I}_{3},x,y)\in W_0$, there exists a unique ideal sheaf $\mathcal{I}_1$ with two short exact sequences
  $$0\to \mathcal{I}_2'\to \mathcal{I}_1\to k_y\to 0 \quad 0\to \mathcal{I}_2\to \mathcal{I}_1\to k_x\to 0.$$
  Thus $W_0\cong \mathfrak{Y}$.

   $W_0\cap W_1=S^{[1,2,3]}$ by \cref{example:2.5} and is smooth. So $(\mathfrak{Z}_+,0)$ is a semi-snc pair.
\end{example}
 Consider the closed subscheme $W_2\subset p_{n}^{-1}\mathcal{Z}_n$
  $$W_2:=\{(\mathcal{I}_{n},\mathcal{I}_{n+1},x)\in W_1|\text{ the length of } k_x \text{ in } \mathcal{O}/\mathcal{I}_n\geq 3.\}.$$
\begin{lemma}
 
  The scheme $q_n^{-1}(W_2)$ and $W_2$ have dimension less or equal to $2n-1$.
\end{lemma}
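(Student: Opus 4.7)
The plan is to stratify $W_2$ and $q_n^{-1}(W_2)$ by the local invariants of $\mathcal{I}_n$ at the distinguished point $x$, namely $k := \mathrm{length}_x(\mathcal{O}/\mathcal{I}_n) \geq 3$ and $m := \mu((\mathcal{I}_n)_x) \geq 2$, the minimal number of generators of the stalk. For each such pair $(k,m)$, let $T_{k,m} \subset S^{[n]} \times S$ be the locally closed locus where these invariants take the given values, so that $W_2 = \bigsqcup_{k,m} p_n^{-1}(T_{k,m})$ is a finite disjoint union (bounded since $k \leq n$ and $m \leq k+1$).

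I would first bound
\begin{equation*}
\dim T_{k,m} \leq 2 + D_{k,m} + 2(n-k),
\end{equation*}
where the three terms correspond to the choice of $x \in S$, the punctual part of $\mathcal{I}_n$ at $x$ in the punctual Hilbert scheme $\mathrm{Hilb}^k(\mathcal{O}_{x,S})$ (letting $D_{k,m}$ denote the dimension of the $\mu = m$ stratum), and the remainder of $\mathcal{I}_n$ in $S^{[n-k]}$ away from $x$. Brian\c{c}on's irreducibility theorem gives the crude bound $D_{k,m} \leq k-1$, and the sharper bound $D_{k,m} \leq k - 2m + 3$ would be established using Iarrobino's stratification of $\mathrm{Hilb}^k$ by Hilbert function, together with the Hilbert--Burch identity $\dim\mathrm{soc}(R/I) = \mu(I) - 1$ for length-$k$ punctual ideals $I$ in the two-dimensional regular local ring $R = \mathcal{O}_{x,S}$ (a direct consequence of the minimal free resolution $0 \to R^{\mu-1} \to R^\mu \to I \to 0$).

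Next I would compute the fibers of the two projections. By \cref{1pr:1}, the fiber of $p_n$ over a point of $T_{k,m}$ is $\mathbb{P}((\mathcal{I}_n)_x \otimes k(x)) \cong \mathbb{P}^{m-1}$. The fiber of $q_n$ over $(\mathcal{I}_n, \mathcal{I}_{n+1}, x)$ parameterises colength-$1$ over-ideals $\mathcal{I}_{n-1} \supset \mathcal{I}_n$ with $\mathcal{I}_{n-1}/\mathcal{I}_n \cong k_x$, equivalently $1$-dimensional submodules of $(\mathcal{O}/\mathcal{I}_n)_x$ annihilated by $\mathfrak{m}_x$; this is the projectivization of $\mathrm{soc}(R/(\mathcal{I}_n)_x)$, giving $\mathbb{P}^{m-2}$. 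Combining,
\begin{equation*}
\dim p_n^{-1}(T_{k,m}) \leq 2n - k - m + 4, \qquad \dim q_n^{-1}(p_n^{-1}(T_{k,m})) \leq 2n - k + 2,
\end{equation*}
and both are $\leq 2n-1$ for $k \geq 3$, $m \geq 2$ (using $k+m \geq 5$ and $k \geq 3$ respectively). Taking maxima over the finitely many relevant $(k,m)$ completes the proof.

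The main technical obstacle is establishing the uniform bound $D_{k,m} \leq k - 2m + 3$. I expect to handle this either by parametrizing length-$k$ punctual ideals with $\mu = m$ generators via their Hilbert--Burch matrix (a $\mu \times (\mu-1)$ matrix with entries in $\mathfrak{m}$) and counting parameters modulo the $GL_\mu \times GL_{\mu-1}$ action on resolutions, or by decomposing the punctual Hilbert scheme by Hilbert function and applying Iarrobino's explicit dimension formulas stratum by stratum; both reduce to standard but intricate combinatorial bookkeeping.
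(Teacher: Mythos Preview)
Your approach is genuinely different from the paper's and, while the overall strategy is sound, it is considerably more laborious. The paper's argument is essentially two lines: it introduces the longer nested Hilbert scheme
\[
S^{[n-2,n-1,n,n+1]} := S^{[n-2,n-1,n]} \times_{S^{[n-1,n]}} S^{[n-1,n,n+1]},
\]
cites from Negu\c{t}'s paper that this scheme has dimension $2n-1$, and then observes that $W_2$ and $q_n^{-1}(W_2)$ are exactly the images of the natural projections from $S^{[n-2,n-1,n,n+1]}$ to $S^{[n,n+1]}$ and to $S^{[n-1,n,n+1]}$ respectively. Since the image of a morphism cannot exceed the dimension of the source, both bounds follow at once. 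No stratification, no punctual Hilbert schemes, no Hilbert--Burch.

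Your fiber computations for $p_n$ (a $\mathbb{P}^{m-1}$) and for $q_n$ (a $\mathbb{P}^{m-2}$, via the socle identification) are correct, and the arithmetic at the end is fine. But the entire argument hinges on the inequality $D_{k,m} \leq k - 2m + 3$, which you rightly flag as the main obstacle. Neither of your proposed routes to it is routine: a parameter count on Hilbert--Burch matrices modulo $GL_m \times GL_{m-1}$ does not directly compute $\dim D_{k,m}$ without controlling the fibres of the presentation map to $\mathrm{Hilb}^k_0$ and the jumping of stabilisers, and the Iarrobino route requires summing over all admissible Hilbert functions with socle dimension $m-1$ and checking the bound stratum by stratum. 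The paper's device of dominating $W_2$ by a single auxiliary space of known dimension packages all of this combinatorics into one citation; your approach trades that citation for a substantial self-contained digression on the fine structure of $\mathrm{Hilb}^k_0(\mathbb{A}^2)$.
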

\begin{proof}
  The scheme
  $$S^{[n-2,n-1,n,n+1]}:=S^{[n-2,n-1,n]}\times_{S^{[n-1,n]}}S^{[n-1,n,n+1]}$$
  has dimension $2n-1$ by (5.21) of \cite{neguct2018hecke}. The image of the projection morphism
  $$S^{[n-2,n-2,n,n+1]}\to S^{[n,n+1]}$$
  is $W_2$ and the image of the projecton morphism
  $$S^{[n-2,n-2,n,n+1]}\to S^{[n-1,n,n+1]}$$
  is $q_{n}^{-1}(W_2)$. Hence $q_n^{-1}(W_2)$ and $W_2$ have dimension less or equal to $2n-1$.
\end{proof}

As the morphism $\alpha_{+}:\mathfrak{Y}\to \mathfrak{Z}_{+}$ factors through $W_0$, we will abuse the notation to denote the morphism $\alpha_{+}:\mathfrak{Y}\to W_0$.
\begin{lemma}
  The morphism $\alpha_{+}:\mathfrak{Y}\to W_0$ is an isomorphism when restricting to $W_0-W_2$.
\end{lemma}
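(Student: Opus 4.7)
The plan is to decompose $W_0-W_2$ and handle the off-diagonal part via \cref{claim3.8}, then analyze the on-diagonal part by an explicit socle computation. Writing $W_0-W_2 = \beta_+^{-1}(U)\sqcup(p_n^{-1}\mathcal{Z}_n - W_2)$, the off-diagonal piece is already an isomorphism under $\alpha_+$ by \cref{claim3.8}. For the on-diagonal piece, I would observe that any point of $\mathfrak{Y}$ whose image in $\mathfrak{Z}_+$ satisfies $\mathcal{I}_n=\mathcal{I}_n'$ and $x=y$ must itself satisfy these equalities, hence lies in $\Delta_{\mathfrak{Y}} = S^{[n-1,n,n+1]}$. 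Therefore $\alpha_+^{-1}(W_0\cap W_1) = \Delta_{\mathfrak{Y}}$, and the restriction $\alpha_+|_{\Delta_{\mathfrak{Y}}}$ coincides with the natural projection $q_n:S^{[n-1,n,n+1]}\to p_n^{-1}\mathcal{Z}_n$.

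Next I would determine the set-theoretic fibers of $q_n$ over $p_n^{-1}\mathcal{Z}_n - W_2$. At a closed point $(\mathcal{I}_n,\mathcal{I}_{n+1},x)$, the fiber parametrizes ideals $\mathcal{I}_{n-1}\supset\mathcal{I}_n$ with $\mathcal{I}_{n-1}/\mathcal{I}_n\cong k_x$; equivalently, $\mathcal{I}_{n-1}/\mathcal{I}_n$ is a one-dimensional $\mathcal{O}$-submodule of $\mathcal{O}/\mathcal{I}_n$ supported at $x$. Being annihilated by $\mathfrak{m}_x$, it is a line in the socle $\mathrm{Soc}_x(\mathcal{O}/\mathcal{I}_n)$. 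Since $W_2$ is exactly the locus where the length of $\mathcal{O}/\mathcal{I}_n$ at $x$ is at least $3$, on the complement this length is $1$ or $2$, so $\mathcal{O}/\mathcal{I}_n$ at $x$ is isomorphic either to $k$ or to $k[\epsilon]/\epsilon^2$, each having a one-dimensional socle. Hence each fiber is a single reduced point.

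To upgrade bijectivity to a scheme-theoretic isomorphism I would construct an explicit inverse $\psi:W_0-W_2\to\mathfrak{Y}$ by giving a universal $\mathcal{I}_{n-1}$. Off the diagonal the natural recipe is $\mathcal{I}_{n-1}=\mathcal{I}_n+\mathcal{I}_n'$, which has the correct colength $n-1$ because $\mathcal{I}_n\cap\mathcal{I}_n' = \mathcal{I}_{n+1}$ there. Over $p_n^{-1}\mathcal{Z}_n - W_2$, the socle $\mathrm{Soc}_x(\mathcal{O}/\mathcal{I}_n)$ has constant rank $1$ along the universal $x$, so it assembles into a line subbundle of the relative $\mathcal{O}/\mathcal{I}_n$ along $x$; one defines $\mathcal{I}_{n-1}$ as the preimage in $\mathcal{O}$ of the resulting codimension-one quotient. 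Composing with $\alpha_+$ gives the identity on $W_0-W_2$ by the fiber analysis above.

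The main obstacle I anticipate is verifying that the two local constructions of $\mathcal{I}_{n-1}$ glue into a single flat family over $W_0-W_2$, especially on infinitesimal neighborhoods of the intersection $W_0\cap W_1$ where $W_0$ need not be smooth. Using the projectivization description of \cref{1pr:2.4} in local coordinates near a point of $p_n^{-1}\mathcal{Z}_n - W_2$, this reduces to checking that as one approaches the diagonal the family $\mathcal{I}_n+\mathcal{I}_n'$ degenerates to the ideal cut out by the socle line, a direct computation in local Artinian rings of length $\leq 2$ whose embedding dimension is at most one.
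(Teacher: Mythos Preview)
Your set-theoretic analysis is correct: over $\beta_+^{-1}(U)$ the map is an isomorphism by \cref{claim3.8}, and over $p_n^{-1}\mathcal{Z}_n - W_2$ the fibre is $\mathbb{P}$ of the socle of $(\mathcal{O}/\mathcal{I}_n)_x$, which is one-dimensional because a local Artinian $k$-algebra of length $\le 2$ is $k$ or $k[\epsilon]/(\epsilon^2)$. However, the passage from bijectivity to a scheme-theoretic isomorphism has a genuine gap. A morphism on $W_0-W_2$ cannot be built by prescribing it separately on the open piece $\beta_+^{-1}(U)$ and on the closed piece $p_n^{-1}\mathcal{Z}_n-W_2$: that is not a gluing datum. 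Your proposed fix, checking that $\mathcal{I}_n+\mathcal{I}_n'$ degenerates to the socle ideal, does not by itself produce a flat family over $W_0-W_2$, because on the diagonal $\mathcal{I}_n+\mathcal{I}_n'=\mathcal{I}_n$ has the wrong colength and the sheaf $\mathcal{I}_n+\mathcal{I}_n'$ is not flat there. One could try to use that $\alpha_+$ is proper with finite reduced fibres, hence finite birational, and conclude via Zariski's Main Theorem; but that requires normality of the target, and normality of $W_0$ is only established \emph{after} this lemma (in \cref{besemidlt}), so invoking it here would be circular.

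The paper avoids this problem by producing an honest open cover rather than an open/closed decomposition. It introduces the open subset $V_4\subset W_0$ consisting of points where the length-$3$ part of $\mathcal{O}/\mathcal{I}_{n+1}$ near $\{x,y\}$ splits off from the rest; equivalently, points that can be written as a disjoint union of a length-$(n-2)$ scheme and an $n=2$ configuration. On $V_4$ the map $\alpha_+$ is identified with $\mathrm{id}_{S^{[n-2]}}\times(\alpha_+\text{ for }n=2)$, and the $n=2$ case is handled in \cref{example6.9} using the known isomorphism $\mathcal{Z}_2\cong S^{[1,2]}$. The paper then observes that $\beta_+^{-1}(U)\cup V_4=W_0-W_2$, giving the result. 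Your socle computation is in fact the pointwise shadow of the $n=2$ isomorphism $W_0^{(n=2)}\cong\mathfrak{Y}_2$; what you are missing is the packaging of that computation into a morphism on an \emph{open} neighbourhood of the diagonal locus, which is exactly what the reduction-to-$n=2$ chart $V_4$ provides.
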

\begin{proof}
   We denote $\mathfrak{Y}_{2}$ the quadruple moduli space when $n=2$. Let
   $$V_3:=\{(\mathcal{I}_{n-2},(\mathcal{I}_1,\mathcal{I}_2,\mathcal{I}_{2}',\mathcal{I}_3,x,y))\in S^{[n-1]}\times \mathfrak{Y}_2|\mathcal{I}_{n-2}+\mathcal{I}_3=\mathcal{O}\}$$
   and $V_4:=\alpha_{+}V_3$. The morphism
  $$(\mathcal{I}_{n-2},(\mathcal{I}_1,\mathcal{I}_2,\mathcal{I}_{2}',\mathcal{I}_3,x,y))\to (\mathcal{I}_{n-2}\cap\mathcal{I}_1,\mathcal{I}_{n-2}\cap\mathcal{I}_2,\mathcal{I}_{n-2}\cap\mathcal{I}_{2}',\mathcal{I}_{n-2}\cap\mathcal{I}_3,x,y)$$
  induces an open emebedding $V_3\subset \mathfrak{Y}$. By \cref{example6.9},
  $$V_3\cong V_4$$
  By \cref{claim3.8}, $\alpha_{+}$ is also an isomorphism when restricting to $\beta_+^{-1}(U)$. $W_2$ is the complment of $\beta_+^{-1}(U)\cup  V_4$ in $W_0$.
\end{proof}

Let $\overline{W_0}$ be the normalization of $W_0$ and $\overline{W_0\cap W_1},\overline{W_2}$ the preimage of $W_0\cap W_1$ and $W_2$ in the normalization. The morphism $\alpha_{+}:\mathfrak{Y}\to W_0$ will factor through
$\overline{\alpha_{+}}:\mathfrak{Y}\to \overline{W_0}$ 
\begin{lemma}
  \label{lem:dlt}
  The scheme $\overline{W}_0$ is a canonical singularity and the pair $(\overline{W_0},\overline{W_0\cap W_1})$ is plt.
\end{lemma}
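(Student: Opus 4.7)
The plan is to prove the canonical claim and the plt claim in turn. For the plt claim on $(\overline{W_0},\overline{W_0\cap W_1})$, I would invoke Inversion of Adjunction (\cref{inversion}) with $S=\overline{W_0\cap W_1}$ and $B=0$: it suffices to verify that $(\overline{W_0\cap W_1},0)$ is klt, together with the technical hypothesis that $\overline{W_0\cap W_1}$ is a normal Weil divisor on $\overline{W_0}$ which is Cartier in codimension $2$. By \cref{canonical}, $W_0\cap W_1=p_n^{-1}\mathcal{Z}_n$ is a Gorenstein rational, hence canonical singularity (via \cref{thm:5.22}). In particular $p_n^{-1}\mathcal{Z}_n$ is already normal, so the preimage $\overline{W_0\cap W_1}$ in the normalization coincides with it, and the canonical property transfers. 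Canonical with empty boundary then yields klt, completing this half.

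For the canonical claim on $\overline{W_0}$ itself, the key observation is that $\overline{\alpha_+}:\mathfrak{Y}\to\overline{W_0}$ is a \emph{small} birational morphism from a smooth scheme. Indeed, the previous lemma shows $\alpha_+$ is an isomorphism over $W_0\setminus W_2$, and the dimension estimate $\dim W_2\leq 2n-1$ (while $\overline{W_0}$ has dimension $2n+2$) forces the exceptional locus of $\overline{\alpha_+}$ to lie in codimension $\geq 3$. First I would verify $\overline{W_0}$ is Gorenstein: $\mathfrak{Z}_+$ is LCI by \cref{2pr:structure}, so the irreducible component $W_0$ is LCI and hence Gorenstein, and I would check that the non-normal locus is controlled enough for $\overline{W_0}$ to inherit the Gorenstein property. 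With Gorenstein in hand, \cref{thm:5.22} reduces the canonical claim to rationality of $\overline{W_0}$, which I would extract from the identity $R\overline{\alpha_+}_*\mathcal{O}_{\mathfrak{Y}}=\mathcal{O}_{\overline{W_0}}$, to be obtained via Grauert--Riemenschneider vanishing combined with Grothendieck duality, exploiting the smooth, Gorenstein, small setup.

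The principal obstacle is exactly the smallness of $\overline{\alpha_+}$: with no exceptional divisors on $\mathfrak{Y}$ over $\overline{W_0}$, discrepancies cannot be read off this resolution, so one must establish canonical indirectly via the Gorenstein-plus-rational route above rather than through a direct discrepancy computation. As a back-up strategy in the spirit of the proof of \cref{1pr:3.1}, I would work on the local model afforded by the open subsets $V_3\subset\mathfrak{Y}$ and $V_4\subset W_0$ introduced in the previous lemma, which identify an open portion of $W_0$ with the quadruple space $\mathfrak{Y}_2$ of \cref{example6.9}, where $W_0\cap W_1=S^{[1,2,3]}$ is smooth and the picture is semi-snc. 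There \cref{blowup} delivers the discrepancy directly; the delicate step is sewing this local computation back to the global picture at points of $W_2$, which is precisely the thin locus where the local model degenerates.
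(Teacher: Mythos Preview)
You have identified the decisive geometric fact---that $\overline{\alpha_+}:\mathfrak{Y}\to\overline{W_0}$ is a small birational morphism (an isomorphism outside the codimension $\geq 3$ locus $\overline{W_2}$)---but you then draw precisely the wrong conclusion from it. Smallness is not an obstacle to a discrepancy computation; it is what makes the computation immediate. If $f:Y\to X$ is small and birational with $Y$ smooth and $K_X$ $\mathbb{Q}$-Cartier, then $f^*K_X$ and $K_Y$ agree outside a codimension $\geq 2$ set, hence $f^*K_X=K_Y$. Any divisor $E$ exceptional over $X$ is automatically exceptional over $Y$ (since $f$ is an isomorphism in codimension $1$), and therefore
\[
a(E,X,0)=a(E,Y,0)>0
\]
because smooth varieties are terminal. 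So $\overline{W_0}$ is terminal, a fortiori canonical. This is exactly the paper's argument, stated in one line.

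The same mechanism dispatches the plt claim. The preimage $\overline{\alpha_+}^{-1}(\overline{W_0\cap W_1})$ is $\Delta_{\mathfrak{Y}}$, a smooth divisor in the smooth $\mathfrak{Y}$ by \cref{2pr:3.1}, so $(\mathfrak{Y},\Delta_{\mathfrak{Y}})$ is snc and in particular plt; smallness then transports plt down to $(\overline{W_0},\overline{W_0\cap W_1})$ by the identical discrepancy comparison. Your Inversion of Adjunction route via \cref{inversion} is a valid alternative for this half, but it requires checking the Cartier-in-codimension-$2$ and $\mathbb{Q}$-Cartier hypotheses separately and is simply more work than the one-line smallness argument the paper uses.

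Your proposed indirect route to ``canonical'' also contains a genuine error: the step ``$\mathfrak{Z}_+$ is LCI, so the irreducible component $W_0$ is LCI'' is not valid in general, so you cannot conclude that $W_0$ or its normalization is Gorenstein from \cref{2pr:structure} alone. And the identity $R\overline{\alpha_+}_*\mathcal{O}_{\mathfrak{Y}}=\mathcal{O}_{\overline{W_0}}$ you propose to feed into \cref{thm:5.22} is, in the paper's logic, a \emph{consequence} of this lemma (it is \cref{5cor2}), so invoking it here risks circularity unless you supply a genuinely independent proof. All of this is avoided once you recognise that smallness already delivers the discrepancies directly.
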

\begin{proof}
    The codimension of $\overline{W_2}$ in $\overline{W_0}$ is $3$ and $\overline{\alpha_{+}}$ is an isomorphism outside of $\overline{W_2}$. The preimage of $\overline{W_0\cap W_1}$ in $\mathfrak{Y}$ is $\Delta_{\mathfrak{Y}}$, which is a smooth divisor of $\mathfrak{Y}$.  Hence $\overline{W}_0$ is a canonical singularity and the pair $(\overline{W_0},\overline{W_0\cap W_1})$ is plt.
\end{proof}
\begin{proposition}
  \label{besemidlt}
  
  The pair $(\mathfrak{Z}_+,0)$ is semi-dlt and  $W_0$ is normal.
\end{proposition}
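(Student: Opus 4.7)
The plan is to apply Proposition \ref{sdlt} to the pair $(\mathfrak{Z}_+, 0)$, which will simultaneously yield the semi-dlt property and the normality of every irreducible component. To invoke that proposition I need three inputs: (i) $\mathfrak{Z}_+$ is demi-normal, (ii) the normalization together with the conductor divisor is dlt, and (iii) there exists a closed subset $W \subset \mathfrak{Z}_+$ of codimension $\geq 3$ such that $(\mathfrak{Z}_+ \setminus W, 0)$ is already semi-dlt.

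For the normalization in step (ii), since $W_1 = S^{[n,n+1]}$ is smooth and $W_0$ has irreducible normalization $\overline{W_0}$, one has $\overline{\mathfrak{Z}}_+ = \overline{W_0} \sqcup W_1$. The conductor divisors are $\overline{W_0 \cap W_1} \subset \overline{W_0}$ and $W_0 \cap W_1 = p_n^{-1}\mathcal{Z}_n \subset W_1$. By Lemma \ref{lem:dlt}, $(\overline{W_0}, \overline{W_0 \cap W_1})$ is plt, hence dlt. For the other component, Example \ref{canonical} shows that $p_n^{-1}\mathcal{Z}_n$ is a Gorenstein canonical singularity and hence klt. Applying Inversion of Adjunction (Theorem \ref{inversion}) with $X = W_1$, $S = p_n^{-1}\mathcal{Z}_n$, $B = 0$ then gives that $(W_1, p_n^{-1}\mathcal{Z}_n)$ is plt near $p_n^{-1}\mathcal{Z}_n$, and in particular dlt.

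For step (iii), I take $W := W_2 \cup q_n^{-1}(W_2)$, where $W_2 \subset W_1$ is the multiplicity $\geq 3$ locus. By the dimension lemma above, $\dim W \leq 2n-1$, giving codimension $\geq 3$ in $\mathfrak{Z}_+$. Away from $W$ the two components $W_0$ and $W_1$ should each be smooth and should meet transversally along the smooth subscheme $p_n^{-1}\mathcal{Z}_n \setminus W_2$. The local identifications $V_3 \cong V_4$ appearing in the lemma preceding Lemma \ref{lem:dlt} reduce these smoothness and transversality checks to the model case $n = 2$ of Example \ref{example6.9}, where $(\mathfrak{Z}_+, 0)$ is explicitly verified to be semi-snc; by Example \ref{snc} this is semi-dlt. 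Input (i) then drops out: $\mathfrak{Z}_+$ is $S_2$ since it is lci (Proposition \ref{2pr:structure}), and outside the codimension $3$ set $W$ its codimension $1$ singular locus consists only of nodes coming from the transverse crossings of $W_0$ and $W_1$.

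The main technical obstacle is step (iii): one must verify that, outside $W$, the scheme structure of $\mathfrak{Z}_+$ near a point of $W_0 \cap W_1$ is étale-locally (via a $V_3$-type chart) isomorphic to the product of a smooth factor with the $n = 2$ model of Example \ref{example6.9}, so that the semi-snc property can actually be read off. Once this local analysis is carried out and (i)-(iii) are verified, Proposition \ref{sdlt} concludes that $(\mathfrak{Z}_+, 0)$ is semi-dlt and that its irreducible components are normal; in particular $W_0$ is normal, completing the proof.
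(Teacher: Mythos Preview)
Your approach is essentially identical to the paper's: apply Proposition~\ref{sdlt} by checking dlt on each branch of the normalization via Lemma~\ref{lem:dlt} and Inversion of Adjunction (Theorem~\ref{inversion} applied to $(W_1, W_0\cap W_1)$ using Example~\ref{canonical}), and verify semi-snc outside a codimension-$3$ locus by reducing to the $n=2$ model of Example~\ref{example6.9}. Two small corrections: your bad locus $W$ should simply be $W_2$, since $q_n^{-1}(W_2)$ lives in $S^{[n-1,n,n+1]}$ rather than in $\mathfrak{Z}_+$; and the paper makes your ``$V_3$-type chart'' explicit by introducing the open subset
\[
V_5 := \{(\mathcal{I}_{n-2},(\mathcal{I}_2,\mathcal{I}_2',\mathcal{I}_3,x,y))\in S^{[n-2]}\times \mathfrak{Z}_2^+ \mid \mathcal{I}_{n-2}+\mathcal{I}_3=\mathcal{O}\}\subset \mathfrak{Z}_+,
\]
on which the semi-snc property is read off directly from Example~\ref{example6.9}, with $W_2$ being exactly the complement of $V_5\cup\beta_+^{-1}(U)$.
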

\begin{proof}
  Let
  $$V_5:=\mathcal{I}_{n-2},(\mathcal{I}_2,\mathcal{I}_{2}',\mathcal{I}_3,x,y))\in S^{[n-1]}\times \mathfrak{Z}_2^{+}|\mathcal{I}_{n-2}+\mathcal{I}_3=\mathcal{O}\}$$
  where $\mathfrak{Z}_2^{+}=S^{[2,3]}\times_{S^{[2]}}S^{[2,3]}$ is the triple moduli space when $n=2$. $V_5$ is an open subscheme of $\mathfrak{Z}_{+}$. The pair$(V_5,0)$ is a semi-snc by \cref{example6.9} and so is $(V_5\cup \beta_{+}^{-1}(U),0)$. $W_2$ is the complement of $V_5\cup \beta_{+}^{-1}(U)$ in $\mathfrak{Z}_{+}$ and 
  $$codim_{W_2}\mathfrak{Z}_+=3.$$

  By \cref{conductor}, $W_0\cap W_1$ is the conductor subscheme of $\mathfrak{Z}_{+}$. By \cref{canonical}, $W_0\cap W_1$ is a canonical singularity. By the inversion of adjunction theorem \cref{inversion}, the pair $(W_1,W_0\cap W_1)$ is plt and thus dlt.

 By \cref{lem:dlt}, $(\overline{W_0},\overline{W_0\cap W_1})$ is also a plt pair and thus a dlt pair. 

By \cref{sdlt}, $(\mathfrak{Z}_+,0)$ is a semi-dlt pair, and $W_0$ is normal. 
\end{proof}

\begin{corollary}
  \label{5cor2}
  We have the formula
  $$R\alpha_{+*}(\mathcal{O}_{\mathfrak{Y}})=\mathcal{O}_{W_0}.$$
\end{corollary}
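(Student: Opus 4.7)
The plan is to deduce the statement directly from the singularity analysis already completed: the morphism $\alpha_+$ factors as a resolution of the rational singularity $W_0$, and the conclusion follows from the definition of rational singularities.

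First I would observe that the map $\alpha_+ \colon \mathfrak{Y} \to W_0$ is a birational morphism from a smooth variety: by \cref{2pr:3.1} the scheme $\mathfrak{Y}$ is smooth, and the lemma preceding \cref{lem:dlt} shows that $\alpha_+$ is an isomorphism over the open set $W_0 \setminus W_2$ (where $W_2$ has codimension at least $3$ in $W_0$). Hence $\alpha_+$ is a resolution of singularities of $W_0$.

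Next I would invoke \cref{besemidlt} to conclude that $W_0$ is normal, so $W_0 = \overline{W_0}$, and then apply \cref{lem:dlt}, which says that $\overline{W_0}$ is a canonical singularity. By \cref{thm:5.22}, any canonical singularity over a field of characteristic zero is a rational singularity, so $W_0$ is rational.

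Finally, by the remark following \cref{rational}, rationality of $W_0$ implies that for every resolution of singularities $f \colon Y \to W_0$ one has $f_* \mathcal{O}_Y = \mathcal{O}_{W_0}$ and $R^i f_* \mathcal{O}_Y = 0$ for $i > 0$. Applied to $\alpha_+ \colon \mathfrak{Y} \to W_0$, this yields $R\alpha_{+*} \mathcal{O}_{\mathfrak{Y}} = \mathcal{O}_{W_0}$, as desired. No step presents a real obstacle here; the content has been entirely absorbed into establishing the semi-dlt structure of $\mathfrak{Z}_+$ and the canonicity of $\overline{W_0}$ in the preceding results, so the corollary is a direct consequence.
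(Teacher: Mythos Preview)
Your proposal is correct and matches the paper's intended argument: the corollary is stated without proof precisely because it follows immediately from \cref{besemidlt} (normality of $W_0$), \cref{lem:dlt} (canonicity of $\overline{W_0}=W_0$), and \cref{thm:5.22} (canonical $\Rightarrow$ rational), together with the fact that $\alpha_+$ is a resolution of $W_0$. This is exactly the chain of implications you spelled out, and it parallels how \cref{5cor1} follows from \cref{1pr:3.1}.
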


\section{The Proof of \cref{thm:1.1}}
\label{sec:6}
In this section, we prove \cref{thm:1.1}.

\begin{definition}
  We will write $S_1,S_2$ for two copies of $S$, in order to emphasize  the  factors of $S\times S$ and write $\mathcal{M}_1$, $\mathcal{M}_2$ and $\mathcal{M}_3$ for three copies of $\mathcal{M}$, in order to emphasize the factors of $\mathcal{M}\times \mathcal{M}\times \mathcal{M}$. Given $P\in D^{b}(\mathcal{M}_{1}\times \mathcal{M}_{2}\times S_1)$ and $Q\in D^{b}(\mathcal{M}_{2}\times \mathcal{M}_{3}\times S_2)$, we define the composition $QP\in D^{b}(\mathcal{M}_{1}\times \mathcal{M}_{3}\times S_1\times S_2)$ by
$$QP:=R\pi_{13_{*}}(L\pi_{12}^*P\otimes L\pi_{23}^*Q),$$
where $\pi_{12},\pi_{23}$ and $\pi_{13}$ are the projections from $\mathcal{M}_1\times \mathcal{M}_2\times \mathcal{M}_3\times S_1\times S_2$ to $\mathcal{M}_1\times \mathcal{M}_2\times S_1$, $\mathcal{M}_2\times \mathcal{M}_3\times S_2$ and $\mathcal{M}_1\times \mathcal{M}_3\times S_1\times S_2$ respectively.
\end{definition}

\begin{theorem}
  \label{thm:7.1}
  Given two integers $m$ and $r$ and let
  \begin{align*}
    e_{m-r}=\mathcal{L}_1^{m-r}\mathcal{O}_{S^{[n,n+1]}}\in D^b(S^{[n]}\times S^{[n+1]}\times S) \\
    f_r=\mathcal{L}_1^{k-1}\mathcal{O}_{S^{[n,n+1]}}[1]\in D^b(S^{[n+1]}\times S^{[n]}\times S).
  \end{align*}

  Then
  \begin{enumerate}
  \item If $m>0$, then there are $b_{m,r}^k\in D^b(S^{[n]}\times S^{[n]}\times S\times S)$ for $0\leq k \leq m$ such that $b_{m,r}^{m}=e_{m-r}f_r$ and $b_{m,r}^{0}=f_re_{m-r}$, with exact triangles
    $$\mathfrak{B}_{m,r}^{k}:R\Delta_*(h_{m,k}^{+})[-1]\to b_{m,r}^{k}\to b_{m,r}^{k+1}\to R\Delta_*(h_{m,k}^{+})$$ 
  \item If $m<0$, then there are $c_{m,r}^r\in D^{b}(S^{[n]}\times S^{[n]}\times S \times S)$ for $m\leq k\leq 0$ such that $c_{m,r}^{m}=e_{m-r}f_r$ and  $c_{m,r}^{0}=f_re_{m-r}$, and exact triangles
    $$\mathfrak{C}_{m,r}^k:R\Delta_*(h_{m,k}^-)[-1]\to c_{m,r}^{k-1}\to c_{m,r}^{k}\to R\Delta_*(h_{m,k}^-)$$
        \item If $m=0$, there is an explicit isomorphism $f_re_{-r}=e_rf_{-r}\oplus \mathcal{O}_{\Delta}[1]$.
  \end{enumerate}
\end{theorem}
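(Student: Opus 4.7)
The plan is to interpolate between $f_re_{m-r}$ and $e_{m-r}f_r$ by an explicit sequence of objects in $D^b(S^{[n]}\times S^{[n]}\times S\times S)$, each realized as a derived pushforward of an explicit line bundle from the quadruple moduli space $\mathfrak{Y}$ (or from $W_1$) studied in \cref{sec:5}. The geometric backbone is \cref{prop5.2}, which presents $\mathfrak{Y}$ as a resolution of $\mathfrak{Z}_-$ and of the main component $W_0$ of $\mathfrak{Z}_+$ with the pushforwards $R\alpha_{\pm*}\mathcal{O}_\mathfrak{Y}$ as simple as possible.

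First I would realize $f_re_{m-r}$ and $e_{m-r}f_r$ as derived pushforwards of explicit line bundle kernels from the triple moduli spaces $\mathfrak{Z}_+$ and $\mathfrak{Z}_-$ respectively. Because both triple schemes are locally complete intersections of the expected dimension (\cref{1pr:3.1} and \cref{2pr:structure}), the derived fiber product defining the convolution coincides with the classical one, so no spurious Tor terms appear and the convolutions are honestly pushforwards of line bundles carrying the prescribed shifts. For $\mathfrak{Z}_+=W_0\cup W_1$ the short exact sequence
\begin{equation*}
0 \to \mathcal{O}_{W_0}(-W_0\cap W_1) \to \mathcal{O}_{\mathfrak{Z}_+} \to \mathcal{O}_{W_1} \to 0
\end{equation*}
separates the two components: the $W_0$ piece can be lifted to $\mathfrak{Y}$ via \cref{prop5.2}, while the $W_1\cong S^{[n,n+1]}$ piece pushes forward through the diagonal and, by the formulas of \cref{L1}, supplies precisely the $k=0$ graded factor $R\Delta_*(h_{m,0}^+)$ of the filtration.

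The core of the argument then takes place on $\mathfrak{Y}$, where \cref{2pr:3.1} provides the short exact sequences
\begin{equation*}
0 \to \mathcal{L}_1 \to \mathcal{L}_2' \to \mathcal{L}_2'\mathcal{O}_{\Delta_\mathfrak{Y}} \to 0, \qquad 0 \to \mathcal{L}_1' \to \mathcal{L}_2 \to \mathcal{L}_2\mathcal{O}_{\Delta_\mathfrak{Y}} \to 0
\end{equation*}
relating the four tautological line bundles modulo the smooth divisor $\Delta_\mathfrak{Y}\cong S^{[n-1,n,n+1]}$. Tensoring these iteratively by suitable monomials in $\mathcal{L}_1,\mathcal{L}_2,\mathcal{L}_1',\mathcal{L}_2'$ produces a chain of exact triangles interpolating between the monomial which pushes forward to the $W_0$ part of $f_re_{m-r}$ and the monomial which pushes forward to $e_{m-r}f_r$ on $\mathfrak{Z}_-$. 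Each successive cone is supported on $\Delta_\mathfrak{Y}$, so pushing forward via $\theta$ lands it on the diagonal $\Delta$; the explicit pushforward formulas of \cref{lem:4.5} and \cref{cor} then identify these graded pieces with $R\Delta_*(h_{m,k}^\pm)$ as defined in \cref{hmk}. Defining $b_{m,r}^k$ (respectively $c_{m,r}^k$) to be the intermediate pushforwards yields the desired triangles $\mathfrak{B}_{m,r}^k$ and $\mathfrak{C}_{m,r}^k$.

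The main obstacle I anticipate is the combinatorial bookkeeping: one must choose the correct order of elementary tensor operations so that the graded pieces arise with exactly the shifts and twists prescribed by \cref{hmk}, and one must verify that the $W_1$ contribution slots neatly into the $k=0$ position without duplicating another $h_{m,k}^\pm$. The case $m=0$ is also special: the assertion $f_re_{-r}=e_rf_{-r}\oplus\mathcal{O}_\Delta[1]$ demands an honest splitting rather than a mere extension, which I expect to follow from the fact that for $m=0$ no nontrivial interpolation between $\mathcal{L}_1,\mathcal{L}_1'$ and $\mathcal{L}_2,\mathcal{L}_2'$ monomials is required, combined with the short exact sequence $0\to\mathcal{L}\to\mathcal{O}_{S^{[n,n+1]}}\to p_n^{-1}\mathcal{O}_{\mathcal{Z}_n}\to 0$ of \cref{eq:Zn}, which reduces the residual $W_1$ contribution to the explicit summand $\mathcal{O}_\Delta[1]$.
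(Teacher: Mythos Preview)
Your proposal is correct and follows essentially the same route as the paper: realize the two convolutions on $\mathfrak{Z}_\pm$, lift to $\mathfrak{Y}$ via \cref{prop5.2}, interpolate by twisting with powers of $\mathcal{O}(\Delta_\mathfrak{Y})$ (the paper packages this as $a_{m,r}^k:=R\theta_*(\mathcal{L}_1'^{m-r}\mathcal{L}_1^{r-1}\mathcal{O}(k\Delta_\mathfrak{Y}))[1]$ and the single sequence $0\to\mathcal{O}_\mathfrak{Y}(-\Delta_\mathfrak{Y})\to\mathcal{O}_\mathfrak{Y}\to\mathcal{O}_{\Delta_\mathfrak{Y}}\to 0$, rather than the two sequences of \cref{2pr:3.1}, but these are the same up to a twist), and handle the $k=0$ step and the $m=0$ splitting via the component decomposition of $\mathfrak{Z}_+$. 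The only bookkeeping point to watch is that the paper uses \emph{two} short exact sequences on $\mathfrak{Z}_+$---your sequence (equivalent to the paper's with quotient $\mathcal{O}_{W_1}$) feeds the $m<0$ endpoint $\mathfrak{C}_{m,r}'^0$ and produces $h_{m,0}^-$, while the companion sequence $0\to\mathcal{L}_1\mathcal{O}_{W_1}\to\mathcal{O}_{\mathfrak{Z}_+}\to\mathcal{O}_{W_0}\to 0$ is what supplies $h_{m,0}^+$ for $m>0$; with that adjustment your outline matches the paper exactly.
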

\cref{thm:1.1} follows from \cref{thm:7.1} and \cref{thm:4.9}. We will prove \cref{thm:7.1} later in this section.

\subsection{$e_{m-r}f_r$ and $f_re_{m-r}$ Revisited}
As $\mathfrak{Z}_{-}$ and $\mathfrak{Z}_{+}$ are both Cohen-Macaulay of expected dimension, we have the following formula:
  \begin{align*}
     e_{m-r}f_{r}=R\beta_{-*}(\mathcal{L}_2'^{m-r}\mathcal{L}_2^{r-1}\mathcal{O}_{\mathfrak{Z}_{-}})[1]\in D^b(S^{[n]}\times S^{[n]}\times S \times S) \\
    f_re_{m-r}=R\beta_{+*}(\mathcal{L}_1^{m-r}\mathcal{L}_1'^{r-1}\mathcal{O}_{\mathfrak{Z}_{+}})[1]\in D^b(S^{[n]}\times S^{[n]}\times S \times S)   
  \end{align*}
  By \cref{1pr:3.1},  $R\alpha_{-*}\mathcal{O}_{\mathfrak{Y}}=\mathcal{O}_{\mathfrak{Z}_-}$ and  by \cref{2pr:3.1}
  \begin{align}
    e_{m-r}f_r&=R\theta_*(\mathcal{L}_2'^{m-r}\mathcal{L}_2^{r-1}\mathcal{O}_{\mathfrak{Y}})[1] \nonumber \\
    \label{eq:7.1} &=R\theta_*(\mathcal{L}_1'^{m-r}\mathcal{L}_1^{r-1}\mathcal{O}((m-1)\Delta_{\mathfrak{Y}}))[1].
  \end{align}

\subsection{$a_{m,r}^{k}$ and $\mathfrak{A}_{m,r}^k$}
  We recall the short exact sequence 
\begin{equation}
    \label{eq:7.2}
    0\to \mathcal{O}_{\mathfrak{Y}}(-\Delta_{\mathfrak{Y}})\to \mathcal{O}_{\mathfrak{Y}} \to \mathcal{O}_{\Delta_{\mathfrak{Y}}}\to 0.
  \end{equation}
  \begin{definition}
    For any integer $r$, we define
    $$a_{m,r}^{k}=R\theta_{*}(\mathcal{L}_{1}'^{m-r}\mathcal{L}_{1}^{r-1}\mathcal{O}(k\Delta_{\mathfrak{Y}}))[1].$$
     We define the exact traingles
  \begin{equation}
    \label{eq:7.3}
    \mathfrak{A}_{m,r}^k :a_{m,r}^{k-1}\to a_{m,r}^{k} \to R\Delta_*(R(p_n\circ q_n)_*\mathcal{L}_{1}^{m-1-k}\mathcal{L}_{2}^k)[1] \to a_{m,r}^{k-1}[1]
  \end{equation}
  by applying the functor $R\theta_{*}(-\otimes \mathcal{L}_{1}'^{m-r}\mathcal{L}_{1}^{r-1}\mathcal{O}(k\Delta_{\mathfrak{Y}}))[1]$ to \cref{eq:7.2}.
  \end{definition}
  By \cref{eq:7.1}, $e_{m-r}f_r\cong a_{m,r}^{m-1}$.  Moreover,  when $m<0$, by \cref{1:rmk}, $R(p_n\circ q_n)_*\mathcal{L}_{1}^{-1}\mathcal{L}_{2}^m=0$ and $e_{m-r}f_r\cong a_{m,r}^{m-1}\cong a_{m,r}^m$.

\subsection{$\mathfrak{B}_{m,r}'^0$ and $\mathfrak{C}_{m,r}'^{0}$}
By \cref{canonical},  $R\alpha_{+*}\mathcal{O}_{\Delta_{\mathfrak{Y}}}=\mathcal{O}_{W_0\cap W_1}$. Taking the $R\alpha_{+*}$ functor to the short exact sequence \cref{eq:7.2}, we have the short exact sequence 
    \begin{equation*}
      0\to R\alpha_{+*}\mathcal{O}(-\Delta_{\mathfrak{Y}})\to \mathcal{O}_{W_0}\to \mathcal{O}_{W_0\cap W_1}\to 0.      \end{equation*}
    Recalling the short exact sequence in \cref{eq:Zn}:
    $$0\to \mathcal{L}\otimes \mathcal{O}_{W_1}\to \mathcal{O}_{W_1}\to \mathcal{O}_{W_0\cap W_1}\to 0.$$
    We have the following commutative diagram:
      \begin{equation*}
        \begin{tikzcd}
          & & 0\ar{d} & 0\ar{d} \\
          & &  R\alpha_{+*}\mathcal{O}(-\Delta_{\mathfrak{Y}}) \ar{d}\ar{r}{\cong} &  R\alpha_{+*}\mathcal{O}(-\Delta_{\mathfrak{Y}}) \ar{d} \\
          0 \ar{r} & \mathcal{L}\otimes \mathcal{O}_{W_1} \ar{r} \ar{d}{\cong} & \mathcal{O}_{\mathfrak{Z}_+} \ar{r} \ar{d} & \mathcal{O}_{W_0} \ar{r} \ar{d} & 0 \\
          0 \ar{r} & \mathcal{L}\otimes \mathcal{O}_{W_1}\ar{r} & \mathcal{O}_{W_1} \ar{r}\ar{d} & \mathcal{O}_{W_0\cap W_1} \ar{r}\ar{d} & 0 \\
          & & 0 & 0
        \end{tikzcd}
      \end{equation*}
      where all rows and columns are short exact sequences. Thus we have short exact sequences:
      \begin{align}
         \label{eq:7.4}
        0\to R\alpha_{+*}\mathcal{O}(-\Delta_{\mathfrak{Y}})\to \mathcal{O}_{\mathfrak{Z}_+} \to \mathcal{O}_{W_1}\to 0 \\
          \label{eq:7.6}
0\to \mathcal{L}_1\mathcal{O}_{S^{[n,n+1]}}\to \mathcal{O}_{\mathfrak{Z}_{+}} \to  \mathcal{O}_{W_0}\to 0.
      \end{align}
Taking the functor $R\beta_{+*}(-\otimes \mathcal{L}_{1}'^{m-r}\mathcal{L}_{1}^{r-1})[1]$ to \cref{eq:7.4} and \cref{eq:7.6} respectively, we get the exact triangles
\begin{align}
  \label{eq:7.5}
 & \mathfrak{C}_{m,r}'^{0}:a_{m,r}^{-1}\to f_{r}e_{m-r} \to R\Delta_*(Rp_{n*}\mathcal{L}^{m-1})[1] \to a_{m,r}^{-1}[1]  \\
 \label{eq:7.7}
  &  \mathfrak{B}_{m,r}'^{0}:  R\Delta_*(Rp_{n*}\mathcal{L}^{m})[1] \to f_{r}e_{m-r} \to a_{m,r}^0 \to R\Delta_*(Rp_{n*}\mathcal{L}^{m})[2].
\end{align}

\subsection{$\mathfrak{B}_{m,r}^k$ and $\mathfrak{C}_{m,r}^k$}

\begin{definition}
  When $m>0$, we define $b_{m.r}^k\in D^{b}(S^{[n]}\times S^{[n]}\times S\times S)$ and natural transforms $\mathfrak{B}_{m,r}^k:b_{m,r}^{k}\to b_{m,r}^{k+1}$ by

  \begin{minipage}{0.5\linewidth}
     \begin{equation}
    b_{m,r}^k:= 
    \begin{cases}
      f_{r}e_{m-r} & k=0\\
      a_{m,r}^{k-1} & 1\leq k\leq m
    \end{cases}
  \end{equation}
\end{minipage}
\begin{minipage}{0.5\linewidth}
    \begin{equation}
    \mathfrak{B}_{m,r}^k:=
    \begin{cases}
      \mathfrak{B}_{m,r}'^{0} & k=0\\
      \mathfrak{A}_{m,r}^{k}& 1\leq k\leq m-1.
    \end{cases}
  \end{equation}
\end{minipage}

We have $b_{m,r}^{0}=f_re_{m-r}$ and $b_{m,r}^{m}=e_{m-r}f_r$.

When $m<0$, we define $c_{m,r}^k\in D^{b}(S^{[n]}\times S^{[n]}\times S\times S)$ and $\mathfrak{C}_{m,r}^k:c_{m,r}^{k-1}\to c_{m,r}^{k}$ by

\begin{minipage}{0.5\linewidth}
  \begin{equation}
    c_{m,r}^k=
    \begin{cases}
      f_{r}e_{m-r} & k=0\\
      a_{m,r}^k & m\leq k\leq -1
    \end{cases}
  \end{equation}
\end{minipage}
\begin{minipage}{0.5\linewidth}
   \begin{equation}
    \label{eq:7.8}
    \mathfrak{C}_{m,r}^k=
    \begin{cases}
      \mathfrak{C}_{m,r}'^{0} & k=0 \\
     \mathfrak{A}_{m,r}^k & m+1 \leq k \leq -1
    \end{cases}
  \end{equation}
\end{minipage}
We have $f_re_{m-r}=c_{m,r}^{0}$ and $e_{m-r}f_r=c_{m,r}^{m}$.
\end{definition}

  \begin{proof}[Proof of \cref{thm:7.1}]When $m>0$, we only need to prove that the cone of $\mathfrak{B}_{m,r}^k$ is $R\Delta_*(h_{m,k}^+)$ and the cone of  $\mathfrak{C}_{m,r}^k$ is $R\Delta_*(h_{m,k}^-)$. It follows from \cref{eq:7.3}, \cref{eq:7.5} and \cref{eq:7.7}.

     When $m=0$,
  \begin{align*}
    e_{-r}f_r=R\beta_{+*}(R\alpha_{+*}(\mathcal{L}_1'^{-r}\mathcal{L}_1^{r-1}\mathcal{O}(-\Delta_{\mathfrak{Y}}))).
  \end{align*}
  We have the short exact sequence:
  \begin{equation}
    \label{eq:7.12}
      0\to \mathcal{L}_1'^{-r}\mathcal{L}_1^{r-1}\mathcal{O}_{\mathfrak{Z}_+}\to \mathcal{L}_1'^{-r}\mathcal{L}_1^{r-1}(\mathcal{O}_{W_0}\oplus\mathcal{O}_{W_1})\to \mathcal{L}^{-1}\mathcal{O}_{W_0\cap W_1}\to 0.
  \end{equation}

  and
  \begin{align*}
    R\beta_{+*}(\mathcal{L}^{-1}\mathcal{O}_{W_0\cap W_1})&=\{R\beta_{+*}(\mathcal{O}_{W_1}\to \mathcal{L}^{-1}\mathcal{O}_{W_1})\} \\
                                          &=R\Delta_{*}\{\mathcal{O}_{S^{[n]}\times S}\to \mathcal{O}_{S^{[n]}\times S}\} \text{ by \cref{L1} }\\
                                          &=0
  \end{align*}

  By \cref{eq:7.4} and \cref{eq:7.12}, we have isomorphisms
  \begin{align*}
   & f_{r}e_{-r}\cong R\beta_{+*}\mathcal{L}_1'^{-r}\mathcal{L}_1^{r-1}(\mathcal{O}_{W_0})[1]\oplus\mathcal{O}_{\Delta}[1] \\
   & e_{-r}f_{r}\cong R\beta_{+*}\mathcal{L}_1'^{-r}\mathcal{L}_1^{r-1}(\mathcal{O}_{W_0})[1]
  \end{align*}
  \end{proof}

\section{Extension Classes between $h_{m,k}^{\pm}$}
\label{sec:7}
When $m>0$, the composition of $\mathfrak{B}_{m,r}^k$ and $\mathfrak{B}_{m,r}^{k+1}$ in \cref{thm:7.1} induces an extension of $R\Delta_*h_{m,k}^{+}$ and $R\Delta_*h_{m,k+1}^{+}$, i.e. an extension class of
$$Hom(R\Delta_*h_{m,k+1}^{+},R\Delta_*h_{m,k}^{+}[1]).$$
By the Hochschild-Kostant-Rosenberg theorem (\cite{MR1390671}, also \cref{rmk7.4}),
$$Hom(R\Delta_*h_{m,k+1}^{+},R\Delta_*h_{m,k}^{+}[1])=\bigoplus_{j=0}^\infty Hom(h_{m,k+1}^+\otimes \wedge^jT^*(S^{[n]}\times S),h_{m,k}^+[1-j]).$$
When $m<0$, there is also be an extension class of
$$Hom(R\Delta_*h_{m,k}^{-},R\Delta_*h_{m,k-1}^{-}[1])=\bigoplus_{j=0}^\infty Hom(h_{m,k}^{-}\otimes \wedge^jT^*(S^{[n]}\times S),h_{m,k-1}^{-}[1-j]).$$
In this section, we will explicitly computing the above extension classes and prove that
\begin{proposition}
  \label{ext2}
  The extension class in $Hom(R\Delta_*h_{m,k+1}^{+},R\Delta_*h_{m,k}^{+}[1])$ is
  $$(0,\mathfrak{H}_{m,k}^{+},0,0,\cdots)$$
  and  the extension class in $Hom(R\Delta_*h_{m,k}^{-},R\Delta_*h_{m,k-1}^{-}[1])$ is
  $$(0,\mathfrak{H}_{m,k}^-,0,0,\cdots)$$
  where $\mathfrak{H}_{m,k}^{-}$ and $\mathfrak{H}_{m,k}^{-}$ will be defined in \cref{extension}.
\end{proposition}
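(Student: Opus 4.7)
The plan is to compute the Yoneda extension class by lifting it to the smooth quadruple space $\mathfrak{Y}$, identifying it there as a fundamental class of the divisor $\Delta_{\mathfrak{Y}}$, and then reading off which HKR slot it occupies after pushforward.

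By the octahedron axiom, the class in $\mathrm{Hom}(R\Delta_* h_{m,k+1}^+, R\Delta_* h_{m,k}^+[1])$ attached to the filtration $b_{m,r}^k\subset b_{m,r}^{k+1}\subset b_{m,r}^{k+2}$ of \cref{thm:7.1} is the composition $\delta_k[1]\circ \partial_{k+1}$, where $\partial_{k+1}:R\Delta_* h_{m,k+1}^+\to b_{m,r}^{k+1}[1]$ is the (shifted) boundary of $\mathfrak{B}_{m,r}^{k+1}$ and $\delta_k:b_{m,r}^{k+1}\to R\Delta_* h_{m,k}^+$ is the projection in $\mathfrak{B}_{m,r}^k$. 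For $1\leq k\leq m-2$ both triangles are obtained by applying $R\theta_*(-\otimes \mathcal{L}_1'^{m-r}\mathcal{L}_1^{r-1})[1]$ to (\ref{eq:7.2}) twisted by $\mathcal{O}(k\Delta_{\mathfrak{Y}})$ and $\mathcal{O}((k+1)\Delta_{\mathfrak{Y}})$ respectively, so $\delta_k[1]\circ \partial_{k+1}$ lifts on $\mathfrak{Y}$ to the Yoneda splicing of two twisted copies of (\ref{eq:7.2}). Up to these line bundle twists, the splicing is (by the Koszul interpretation) the Atiyah/fundamental class of the smooth Cartier divisor $\Delta_{\mathfrak{Y}}\hookrightarrow \mathfrak{Y}$, whose normal bundle is $\mathcal{L}_1^{-1}\mathcal{L}_2$ by \cref{2pr:3.1}.

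I would then push this class forward via $\theta|_{\Delta_{\mathfrak{Y}}}=p_n\circ q_n:S^{[n-1,n,n+1]}\to S^{[n]}\times S$ and invoke HKR for the diagonal of $S^{[n]}\times S$. Under
\begin{equation*}
\mathrm{Hom}(R\Delta_* h_{m,k+1}^+, R\Delta_* h_{m,k}^+[1])=\bigoplus_{j\geq 0}\mathrm{Hom}(h_{m,k+1}^+\otimes \Omega^j_{S^{[n]}\times S}, h_{m,k}^+[1-j]),
\end{equation*}
the image lands in the $j=1$ summand: it is exhibited as the image of the identity of $\mathcal{O}_{\Delta_{\mathfrak{Y}}}$ under the connecting morphism of the pulled-back conormal sequence
\begin{equation*}
0\to \mathcal{O}_{\Delta_{\mathfrak{Y}}}(-\Delta_{\mathfrak{Y}})\to \Omega^1_{\mathfrak{Y}}|_{\Delta_{\mathfrak{Y}}}\to \Omega^1_{\Delta_{\mathfrak{Y}}}\to 0,
\end{equation*}
composed with $(p_n\circ q_n)^*\Omega^1_{S^{[n]}\times S}\to \Omega^1_{\Delta_{\mathfrak{Y}}}$. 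No other $j$ can contribute because the construction involves only a single first-order normal-bundle jump across $\Delta_{\mathfrak{Y}}$. Tensoring with the line bundles appearing in $h_{m,k}^\pm$ and pushing down by $p_n\circ q_n$ produces the explicit morphism which I expect to match the definition of $\mathfrak{H}_{m,k}^+$.

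The $m<0$ case for $\mathfrak{C}_{m,r}^k$ is entirely parallel. The main obstacle will be the boundary cases $k=0$ for $m>0$ and $k=0$ for $m<0$, since $\mathfrak{B}_{m,r}^0$ and $\mathfrak{C}_{m,r}^0$ do not come from (\ref{eq:7.2}) directly but from (\ref{eq:7.7}) and (\ref{eq:7.5}) respectively. These involve the geometry of the two components $W_0, W_1$ of $\mathfrak{Z}_+$ and the conductor divisor $W_0\cap W_1$. To handle them I would trace the identifications in the commutative diagram preceding (\ref{eq:7.4}), use \cref{besemidlt} and \cref{canonical} to replace the role of $\Delta_{\mathfrak{Y}}\subset \mathfrak{Y}$ by $W_0\cap W_1\subset W_1$, and verify that the resulting Atiyah class again lives in the $j=1$ slot with value $\mathfrak{H}_{m,0}^\pm$.
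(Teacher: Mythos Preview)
Your proposal is correct and follows essentially the same route as the paper. Both arguments lift the extension to $\mathfrak{Y}$, identify it (for $k>0$) with the first-order thickening sequence
\[
0\to \mathcal{L}_1^{m-k-1}\mathcal{L}_2^{k}\mathcal{O}_{\Delta_{\mathfrak{Y}}}\to \mathcal{L}_1^{m-k-2}\mathcal{L}_2^{k+1}\mathcal{O}_{2\Delta_{\mathfrak{Y}}}\to \mathcal{L}_1^{m-k-2}\mathcal{L}_2^{k+1}\mathcal{O}_{\Delta_{\mathfrak{Y}}}\to 0,
\]
read off that this sits in the ``$j=1$'' slot via the normal bundle $\mathcal{L}_1^{-1}\mathcal{L}_2$, and then push forward. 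The only stylistic difference is where the HKR-type splitting is applied: the paper invokes \cref{thm1.8} (equivalently \cref{prop:11.1} for the divisor case) directly on $\mathfrak{Y}$ to get the two-term decomposition and the value $(0,\mathrm{id})$, and then appeals to functoriality along the normal-bundle map $s$ of \cref{2eq:2.6}; you instead phrase the same step through the Atiyah class and the conormal sequence before pushing down. For the boundary case $k=0$ the paper is terser than your plan---it simply points to \cref{eq:7.6}---but your more detailed outline via $W_0\cap W_1$ is compatible with that.
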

\cref{ext2} will be proved later in this section.
\subsection{A Splitting Formula and HKR isomorphism}
Let $j:Y\subset X$ be a regular embedding of smooth schemes and $N_{Y/X}$ be the normal bundle. In a first step, we will suppose that $Y$ is given as the zero locus of a regular section $s\in H^0(X,\mathcal{E})$ of a locally free sheaf $\mathcal{E}$ of rank $r$. In this case, the structure sheaf $\mathcal{O}_{Y}$ can be resolved by the Koszul complex:
\begin{equation}
  0\to \wedge^r \mathcal{E}^\vee \to \cdots \to \mathcal{E}^\vee\to \mathcal{O}_X\to j_*\mathcal{O}_Y\to 0,
\end{equation}
with isomorphism given by contraction with $s$. The normal bundle $N_{Y/X}$ is $\mathcal{E}|_Y$.
\begin{proposition}[Proposition 11.1 of \cite{MR2244106}]
  \label{prop:11.1}
  There exists a canonical isomorphism:
  $$Lj^*Rj_*\mathcal{O}_Y\cong \bigoplus_k \wedge^k N_{X/Y}^\vee[k].$$
\end{proposition}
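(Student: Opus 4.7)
The plan is to compute $Lj^*Rj_*\mathcal{O}_Y$ directly using the Koszul resolution already set up in the statement. Since $Y$ is cut out by the regular section $s$, the Koszul complex
$$K^\bullet = \{\wedge^r\mathcal{E}^\vee \to \wedge^{r-1}\mathcal{E}^\vee \to \cdots \to \mathcal{E}^\vee \to \mathcal{O}_X\}$$
(with differentials given by contraction with $s$, and $\mathcal{O}_X$ placed in degree $0$) is a finite resolution of $j_*\mathcal{O}_Y$ by locally free sheaves on $X$. Hence in the derived category $Rj_*\mathcal{O}_Y$ is canonically represented by $K^\bullet$.

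Next, since $K^\bullet$ is termwise locally free, the derived pullback $Lj^*$ coincides with the ordinary pullback, so $Lj^*Rj_*\mathcal{O}_Y \simeq j^*K^\bullet$. The differentials of $j^*K^\bullet$ are given by contraction with $j^*s$, which vanishes identically because $Y$ is the zero locus of $s$. Therefore $j^*K^\bullet$ is a complex of locally free sheaves on $Y$ with zero differentials, and splits in the derived category as a direct sum of its (shifted) terms:
$$j^*K^\bullet \simeq \bigoplus_{k=0}^r (\wedge^k \mathcal{E}^\vee)|_Y [k].$$
For a regular zero locus of a section of $\mathcal{E}$, the conormal sheaf $N_{Y/X}^\vee$ is canonically $\mathcal{E}^\vee|_Y$ (this is the identification $s\mapsto ds\!\mod s^2$ from the conormal sequence), which delivers the claimed decomposition $\bigoplus_k \wedge^k N_{Y/X}^\vee[k]$.

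The main obstacle I would watch for is the \emph{canonicity} of the decomposition, i.e.\ independence of the presentation $(\mathcal{E},s)$. Any two such presentations can be compared by a third dominating one (take $\mathcal{E}\oplus\mathcal{E}'$ with section $(s,s')$), and one checks that the induced maps of Koszul complexes are quasi-isomorphisms that become direct-sum decompositions after pullback to $Y$, matching the various summands. A conceptually cleaner route is to identify each $\wedge^k N_{Y/X}^\vee[k]$ with $\mathcal{H}^{-k}(Lj^*Rj_*\mathcal{O}_Y)[k]$ using the intrinsic $Tor$-computation $\mathcal{T}or^j_{\mathcal{O}_X}(\mathcal{O}_Y,\mathcal{O}_Y)\cong \wedge^j N_{Y/X}^\vee$, and then produce the splitting from the derived $\mathcal{O}_Y$-algebra structure on $Lj^*Rj_*\mathcal{O}_Y$. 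For the use in \cref{sec:7}, however, the explicit Koszul-resolution argument above is all that is needed, since each extension class $\mathfrak{H}_{m,k}^\pm$ is analyzed inside a concrete Koszul model.
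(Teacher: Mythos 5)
Your proof is correct and is essentially the argument the paper is invoking from the cited reference (Huybrechts, Proposition 11.1), whose setup --- the Koszul resolution of $j_*\mathcal{O}_Y$ by the regular section $s$ of $\mathcal{E}$ with $N_{Y/X}\cong\mathcal{E}|_Y$ --- the paper reproduces verbatim just before the statement: one restricts the Koszul complex to $Y$, observes that the differentials (contraction with $s|_Y=0$) vanish, and reads off the direct sum $\bigoplus_k\wedge^k N_{Y/X}^\vee[k]$. Your additional remarks on independence of the presentation $(\mathcal{E},s)$ are a sensible supplement but not needed for the application in the paper.
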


A stronger version of \cref{prop:11.1} was studied in \cite{MR4003476}:
\begin{theorem}[Theorem 1.8, Lemma 3.1 of \cite{MR4003476}]
  \label{thm1.8}
  Let $j:Y\to X$ be a closed embedding of smooth varieties. A choice of splitting of
\begin{equation}
  \label{eq:HKR}
0\to T_{Y}\to T_X|_Y\to N_{X/Y}\to 0  
\end{equation}
  determines an isomorphism:
  $$Lj^*Rj_*(\mathcal{O}_Y)\cong \bigoplus_k \wedge^k N_{X/Y}^\vee[k].$$
\end{theorem}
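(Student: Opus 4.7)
The plan is to reduce to the universal model in which $Y$ sits as the zero section of its normal bundle, where the isomorphism can be read off directly from a Koszul resolution.

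First I would observe that the functor $Lj^*Rj_*(\mathcal{O}_Y)$ depends only on the formal neighborhood $\hat{X}_Y$ of $Y$ in $X$. Indeed, $Rj_*\mathcal{O}_Y$ is set-theoretically supported on $Y$, so both its formation and subsequent derived restriction along $j$ are unaffected by replacing $X$ with $\hat{X}_Y$. Therefore, to construct the desired isomorphism and exhibit its dependence on the splitting, it suffices to identify $\hat{X}_Y$ with the formal completion $\hat{N}$ of $N_{X/Y}$ along its zero section, and then carry out the computation in that model.

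The key use of a splitting $\sigma\colon N_{X/Y}\to T_X|_Y$ of \cref{eq:HKR} is to produce precisely such an identification, via a formal tubular neighborhood theorem in characteristic zero. I would build the isomorphism $\hat{X}_Y\cong \hat{N}$ over $Y$ by induction on the order of nilpotence: the splitting supplies the first-order jet of the retraction, and the obstruction to extending an order-$n$ identification to order $n+1$ lies in a cohomology group built from symmetric powers of $N_{X/Y}^\vee$ that can be killed in characteristic zero by the standard symmetrization/exponential trick attached to $\sigma$. The main technical obstacle is precisely this step: one has to show that each extension step is canonically determined by $\sigma$ (not merely by its first-order data), so that the resulting formal isomorphism is functorial in $\sigma$ and produces genuinely distinct derived isomorphisms for different splittings. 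This is where the content of Lemma 3.1 of \cite{MR4003476} enters.

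Once $\hat{X}_Y\cong \hat{N}$ is in place, the problem reduces to the zero-section embedding $i\colon Y\to N$ of the vector bundle $\pi\colon N\to Y$. Here $i_*\mathcal{O}_Y$ admits the explicit Koszul resolution $\wedge^{\bullet}(\pi^*N^\vee)$, with differential given by contraction against the tautological section of $\pi^*N$. Since $i^*$ sends the tautological section to zero, every differential of the Koszul complex vanishes after pullback and one obtains
\begin{equation*}
Li^*Ri_*\mathcal{O}_Y \;\cong\; i^*\bigl(\wedge^{\bullet}(\pi^*N^\vee)\bigr) \;\cong\; \bigoplus_{k\geq 0} \wedge^k N^\vee[k].
\end{equation*}
Transporting this along the formal isomorphism built from $\sigma$ yields the required splitting of $Lj^*Rj_*(\mathcal{O}_Y)$, with the dependence on the splitting tracked through the construction of $\hat{X}_Y\cong \hat{N}$. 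A final naturality check in $\sigma$ completes the proof and recovers the statement of Theorem 1.8 of \cite{MR4003476}.
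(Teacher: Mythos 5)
This statement is quoted from \cite{MR4003476}; the paper gives no proof of it, so your argument has to stand on its own. Steps (1) and (3) of your outline are sound: $Lj^*Rj_*\mathcal{O}_Y$ depends only on the formal neighborhood, and for the zero section of a vector bundle the Koszul differential pulls back to zero, yielding $\bigoplus_k \wedge^k N_{X/Y}^{\vee}[k]$. The gap is step (2). A splitting of \cref{eq:HKR} is equivalent to a retraction of only the \emph{first} infinitesimal neighborhood $Y^{(1)}\to Y$; it does not linearize the formal neighborhood. The obstruction to extending an order-$n$ identification $\hat{X}_Y\cong\hat{N}$ to order $n+1$ lies in groups of the type $H^1(Y,T_X|_Y\otimes S^{n+1}N_{X/Y}^{\vee})$, which do not vanish in characteristic zero, and there is no symmetrization or exponential argument attached to the splitting that kills them. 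A concrete counterexample to your intermediate claim: for the diagonal $\Delta\colon X\to X\times X$ the sequence \cref{eq:HKR} splits canonically, yet already the second infinitesimal neighborhood of $\Delta$ fails to retract onto $\Delta$ whenever the Atiyah class of $T_X$ is nonzero (e.g.\ $X=\mathbb{P}^1$), since such a retraction amounts to a torsion-free holomorphic connection; so the formal completion of $X\times X$ along $\Delta$ is not isomorphic to that of $T_X$ along its zero section, while the conclusion of the theorem (the classical HKR splitting) still holds there. Your route would therefore pass through a false statement and, where it does apply, would only cover linearizable embeddings.

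The actual mechanism, and the content of Lemma 3.1 of \cite{MR4003476}, uses the splitting only through the first-order retraction $p\colon Y^{(1)}\to Y$. One first observes that the cohomology sheaves of $Lj^*Rj_*\mathcal{O}_Y$ are always $\mathcal{H}^{-k}\cong\wedge^k N_{X/Y}^{\vee}$ (this is your Koszul computation performed locally, where everything splits); the theorem is thus a \emph{formality} statement about the complex. The retraction is used to produce a morphism $N_{X/Y}^{\vee}[1]\to Lj^*Rj_*\mathcal{O}_Y$ inducing the identity on $\mathcal{H}^{-1}$, essentially via the square-zero extension $0\to N_{X/Y}^{\vee}\to\mathcal{O}_{Y^{(1)}}\to\mathcal{O}_Y\to 0$ and adjunction; the multiplicative structure of $Lj^*Rj_*\mathcal{O}_Y$ then extends this single map to $\wedge^kN_{X/Y}^{\vee}[k]\to Lj^*Rj_*\mathcal{O}_Y$ for all $k$, and these are quasi-isomorphisms by the local computation. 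If you replace your formal tubular neighborhood with this first-order argument, the remainder of your outline goes through.
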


\begin{remark}
  \label{rmk7.4}
  Let $X$ be a smooth variety and $\Delta_X:X\to X\times X$ the diagonal embedding. Then $T_{X\times X}|_X=T_X\oplus T_X$ has a canonical splitting and thus
  $$L\Delta_X^*R\Delta_{X*}(\mathcal{O}_X)=\bigoplus_{k=0}^{dim(X)}\wedge^k T^*X[k]$$
  which is the HKR isomorphism in \cite{MR1390671}.
\end{remark}

\subsection{A morphism $s:\mathcal{L}_{1}^{-1}\mathcal{L}_{2}\to (p_n\circ q_n)^{*}T(S^{n}\times S)$}
In this subsection, we first construct two canonical morphisms:
\begin{align}
    \label{2eq:2.6}
  s_1:\mathcal{L}_1^{-1}\mathcal{L}_2\to  (p_n\circ q_n)^*TS^{[n]}\quad  s_2:\mathcal{L}_1^{-1}\mathcal{L}_2\to (p_n\circ q_n)^*(TS)
\end{align}

Let $\pi:S^{[n-1,n,n+1]}\times S\to S^{[n-1,n,n+1]}$ be the projection morphism and $\Gamma$ be the graph of the projection map to $S$. Then there exists short exact sequences:
$$0\to \mathcal{I}_{n+1}\to \mathcal{I}_n\to \pi^*\mathcal{L}_{1}\otimes \mathcal{O}_{\Gamma}\to 0\quad 0\to \mathcal{I}_{n}\to \mathcal{I}_{n-1}\to \pi^*\mathcal{L}_{2}\otimes \mathcal{O}_{\Gamma}\to 0$$
which induces two global sections:
$$r_1:\mathcal{L}_1^{-1}\to \mathcal{H}om_{\pi}(\mathcal{I}_n,\mathcal{O}_{\Gamma}), \quad r_2:\mathcal{L}_2\to \mathcal{E}xt^1_\pi(\mathcal{O}_{\Gamma},\mathcal{I}_{n})$$
Let $\text{proj}_n:S^{[n]}\times S\to S^{[n]}$ be the projection map and $\Gamma_n$ be the graph of the projection map from $S^{[n]}\times S\to S$. Then 
\begin{align*}
  (p_n\circ q_n)^*TS^{[n]}&=(p_n\circ q_n)^*\mathcal{E}xt_{\text{proj}_n}^1(\mathcal{I}_n,\mathcal{I}_n)\\
                          &=\mathcal{E}xt^1_\pi(\mathcal{I}_n,\mathcal{I}_n)\\
  (p_n\circ q_n)^*TS&=\mathcal{E}xt^1_\pi(\mathcal{O}_\Gamma,\mathcal{O}_\Gamma)
\end{align*}
by the flat base change theorem. With the composition of the following two natural homomorphisms:
\begin{align*}
  \mathcal{E}xt^1_\pi(\mathcal{O}_\Gamma,\mathcal{I}_n)\otimes \mathcal{H}om_\pi(\mathcal{I}_n, \mathcal{O}_\Gamma) \to \mathcal{E}xt^1_\pi(\mathcal{I}_n,\mathcal{I}_n)=(p_n\circ q_n)^*TS^{[n]} \\
  \mathcal{H}om_\pi(\mathcal{I}_n,\mathcal{O}_\Gamma)\otimes \mathcal{E}xt^1_\pi(\mathcal{O}_\Gamma,\mathcal{I}_n)\to \mathcal{E}xt^1_\pi(\mathcal{O}_\Gamma,\mathcal{O}_\Gamma)=(p_n\circ q_n)^*TS,
\end{align*}
we get the two canonical morphisms in \cref{2eq:2.6}
\begin{align*}
  s_1:\mathcal{L}_1^{-1}\mathcal{L}_2\to  (p_n\circ q_n)^*TS^{[n]}\quad  s_2:\mathcal{L}_1^{-1}\mathcal{L}_2\to (p_n\circ q_n)^*(TS).
\end{align*}
Let $$s=(r_1,r_2):\mathcal{L}_1^{-1}\mathcal{L}_2\to (p_n\circ q_n)^*T(S^{[n]}\times S).$$

Another way of inducing the morphism is restricting $\theta:\mathfrak{Y}\to S^{[n]}\times S^{[n]}\times S\times S$ to the diagonal of $S^{[n]}\times S^{[n]}\times S\times S$. We get the Cartesian diagram:
\begin{equation}
  \label{2eq:3.5}
  \begin{tikzcd}
    S^{[n-1,n,n+1]} \ar{d}{p_n\circ q_n}\ar{r}{\Delta_{\mathfrak{Y}}} &\mathfrak{Y} \ar{d}{\theta}\\
    S^{[n]}\times S \ar{r}{\Delta} & S^{[n]}\times S^{[n]}\times S \times S
  \end{tikzcd}
\end{equation}
and the normal bundle of $S^{[n]}\times S$ in  $S^{[n]}\times S^{[n]}\times S\times S$ is $T(S^{[n]}\times S)$. Thus the diagram \cref{2eq:3.5} induces a morphism
\begin{equation}
  s':N_{\mathfrak{Y}/\Delta_{\mathfrak{Y}}}=\mathcal{L}_1^{-1}\mathcal{L}_2\to (p_n\circ q_n)_*T(S^{[n]}\times S).
\end{equation}

\begin{proposition}
   The morphism $s'$ coincides with $s$ in \cref{2eq:2.6}.
\end{proposition}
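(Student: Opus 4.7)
The plan is to decompose both morphisms under the canonical direct-sum decomposition $T(S^{[n]}\times S)=TS^{[n]}\oplus TS$. If we write $s'=(s_1',s_2')$ under this splitting, then it suffices to verify $s_i'=s_i$ for $i=1,2$, and by the definitions in \cref{2eq:2.6} the two checks are structurally parallel (one in $\mathcal{E}xt^1_\pi(\mathcal{I}_n,\mathcal{I}_n)$, the other in $\mathcal{E}xt^1_\pi(\mathcal{O}_\Gamma,\mathcal{O}_\Gamma)$), so the work is really one computation.

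First I would reinterpret $s'$ moduli-theoretically. The quadruple scheme $\mathfrak{Y}$ carries universal ideal sheaves $\mathcal{I}_n,\mathcal{I}_n'$ on $\mathfrak{Y}\times S$ together with the two marked points $x,y$, and $\theta$ is the classifying map to $S^{[n]}\times S^{[n]}\times S\times S$. The Cartesian square \cref{2eq:3.5} identifies $S^{[n-1,n,n+1]}$ with $\Delta_\mathfrak{Y}$ as the locus where $\mathcal{I}_n=\mathcal{I}_n'$ and $x=y$. Consequently the normal-bundle map $s'$ is the Kodaira--Spencer/Atiyah class of the universal family along the conormal line $\mathcal{L}_1\mathcal{L}_2'^{-1}$: its two components measure, to first order in the conormal direction, the infinitesimal deformation of $\mathcal{I}_n'$ away from $\mathcal{I}_n$ (landing in $\mathcal{E}xt^1_\pi(\mathcal{I}_n,\mathcal{I}_n)=(p_n\circ q_n)^*TS^{[n]}$) and the infinitesimal displacement of $y$ away from $x$ (landing in $\mathcal{E}xt^1_\pi(\mathcal{O}_\Gamma,\mathcal{O}_\Gamma)=(p_n\circ q_n)^*TS$).

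Next I would compute these Kodaira--Spencer classes explicitly along $\Delta_\mathfrak{Y}$. Along the diagonal, the two universal ideals $\mathcal{I}_n$ and $\mathcal{I}_n'$ sit in the canonical short exact sequences
\begin{equation*}
0\to \mathcal{I}_{n+1}\to \mathcal{I}_n'\to \pi^*\mathcal{L}_1'\otimes\mathcal{O}_{\Gamma'}\to 0, \qquad 0\to \mathcal{I}_n\to \mathcal{I}_{n-1}\to \pi^*\mathcal{L}_2\otimes\mathcal{O}_\Gamma\to 0,
\end{equation*}
whose conormal restrictions are classified by $r_1$ and $r_2$. By naturality of the Atiyah class with respect to short exact sequences, the derived composition $\mathcal{I}_n'\hookrightarrow\mathcal{I}_{n-1}\twoheadrightarrow\pi^*\mathcal{L}_2\otimes\mathcal{O}_\Gamma$ followed by $\pi^*\mathcal{L}_1^{-1}\otimes\mathcal{I}_n\to \mathcal{O}_\Gamma$ produces, on passing to the first-order neighbourhood of $\Delta_\mathfrak{Y}$, exactly the Yoneda/cup product $r_1\cup r_2$. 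This product is by construction the morphism $s_1$ (respectively $s_2$), so $s_1'=s_1$ and $s_2'=s_2$.

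The main obstacle is the last step: matching the Kodaira--Spencer class of the universal deformation with the explicit cup product $r_1\cup r_2$, including sign conventions. To avoid a long Koszul chase on $\mathfrak{Y}\times S$, I would reduce to a universal local model. By base change, one may assume $S=\mathbb{A}^2$ and work \'etale-locally near a generic point of $\Delta_\mathfrak{Y}$, where \cref{example6.101} and its $n$-point generalisation reduce the Cartesian square \cref{2eq:3.5} to an explicit blow-up of a diagonal, and both $s$ and $s'$ become the standard Euler differential of this blow-up. The equality in the universal case then follows from the uniqueness of the Atiyah class and propagates globally because both $s$ and $s'$ are morphisms of locally free sheaves on the smooth irreducible scheme $\Delta_\mathfrak{Y}=S^{[n-1,n,n+1]}$ agreeing on a dense open subset.
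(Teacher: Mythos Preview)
Your strategy differs from the paper's. The paper works pointwise: fixing a closed point $t\in S^{[n-1,n,n+1]}$, it invokes the explicit description of $T_t\mathfrak{Y}$ from Proposition~5.28 of \cite{neguct2018hecke} as a subspace of compatible tuples $(w_0,w_1,w_1',w_2,u,v)$ in a six-fold sum of $\mathrm{Ext}^1$ groups, exhibits a concrete splitting $T_t\mathfrak{Y}=T_tS^{[n-1,n,n+1]}\oplus N$, and then verifies by diagram chasing (\cref{eq:chasing1} and \cref{eq:chasing2}) that the image $(\hat s_1(r_1\otimes r_2),\hat s_2(r_1\otimes r_2))$ of $s$ lies in the one-dimensional normal summand $N$. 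Since $N$ is by construction the fibre of the normal-bundle map $s'$, this identifies $s$ with $s'$ at every point. Your route---interpret $s'$ as a Kodaira--Spencer class, reduce to a generic local model, then extend by density---is more conceptual and, if completed, would avoid importing the tangent-space description from \cite{neguct2018hecke}.

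However, the key step in your argument is not actually carried out. The assertion that ``both $s$ and $s'$ become the standard Euler differential of this blow-up'' is precisely the content of the proposition in the base case: even on $\mathrm{Bl}_{\Delta_S}(S\times S)$ one must still verify that the Yoneda product $r_1\cup r_2$ (which defines $s$) agrees with the tautological normal-bundle inclusion (which defines $s'$). Likewise, the earlier appeal to ``naturality of the Atiyah class with respect to short exact sequences'' is a gesture toward the right mechanism but not a proof: you do not specify which naturality statement is being invoked, nor why it forces the first-order deformation class to equal the cup product rather than some other functorial combination of $r_1$ and $r_2$. Your density argument is sound once the generic identity is established, but as written the entire content of the proposition has been deferred to an unperformed local check. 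The paper's pointwise argument, while terse, actually closes this gap via the explicit diagram chases.
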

\begin{proof}
  A closed point $t$ on $S^{[n-1,n,n+1]}$ corresponds to two short exact sequences of coherent sheaves on $S$:
  \begin{align*}
    0\to \mathcal{I}_{n}\xrightarrow{i_n} \mathcal{I}_{n-1}\xrightarrow{j_n} k_x \to 0 & \quad
    0\to \mathcal{I}_{n+1}\xrightarrow{i_{n+1}} \mathcal{I}_{n}\xrightarrow{j_{n+1}} k_x \to 0.
  \end{align*}
  which induces $r_1\in Hom(\mathcal{I}_n,k_x)$ and $r_2\in Ext^1(k_x,\mathcal{I}_n)$. Let $V$ be the vector space of pairs $\{(w_0,w_1)\in Ext^1(\mathcal{I}_{n-1},\mathcal{I}_{n-1})\oplus Ext^1(\mathcal{I}_{n},\mathcal{I}_n)\}$ such that  $w_0,w_1$ map to the same element of  $Ext^1(\mathcal{I}_n,\mathcal{I}_{n-1})$.   By the proof of Proposition 5.28 of \cite{neguct2018hecke}, elements in $V$ are in 1-1 correspondence with the commutative diagrams of short exact sequences:
   \begin{equation*}
    \begin{tikzcd}
       & 0 \ar{d} & 0\ar{d} & 0\ar{d} & \\
       0 \ar{r} & \mathcal{I}_{n} \ar{r} \ar{d}{i_{n}} & \mathcal{A} \ar{r}{} \ar{d} & \mathcal{I}_{n} \ar{r} \ar{d}{i_{n}} & 0 \\
       0 \ar{r} & \mathcal{I}_{n-1} \ar{r} \ar{d}{j_n} & \mathcal{B} \ar{r} \ar{d} &\mathcal{I}_{n-1}\ar{r} \ar{d}{j_n}& 0 \\
      0\ar{r} & k_x \ar{r}\ar{d} & \mathcal{C} \ar{r} \ar{d} & k_x \ar{r} \ar{d} & 0 \\
      & 0 & 0& 0&
    \end{tikzcd}
  \end{equation*}
  and thus induce a morphism $dp_S^1:V\to Ext^1(k_x,k_x)$. Let $V'$ be the vector space of pairs
  $$\{(w_1',w_2)\in Ext^1(\mathcal{I}_{n},\mathcal{I}_n)\oplus Ext^1(\mathcal{I}_{n+1},\mathcal{I}_{n+1})\}$$
  auch that $w_1',w_2$  map to the same element of $Ext^1(\mathcal{I}_{n},\mathcal{I}_{n+1})$. Then elements in $V'$  are in 1-1 correspondence with the commutative diagrams of short exact sequences:
    \begin{equation*}
    \begin{tikzcd}
       & 0 \ar{d} & 0\ar{d} & 0\ar{d} & \\
       0 \ar{r} & \mathcal{I}_{n+1} \ar{r} \ar{d} & \mathcal{A}' \ar{r}{} \ar{d} & \mathcal{I}_{n+1} \ar{r} \ar{d} & 0 \\
       0 \ar{r} & \mathcal{I}_{n} \ar{r} \ar{d}{} & \mathcal{B}' \ar{r} \ar{d} &\mathcal{I}_n \ar{r} \ar{d}& 0 \\
      0\ar{r} & k_x \ar{r}\ar{d} & \mathcal{C}' \ar{r} \ar{d} & k_x \ar{r} \ar{d} & 0 \\
      & 0 & 0& 0&
    \end{tikzcd}
  \end{equation*}
  and thus also induces a morphism $dp_S^2:V'\to Ext^1(k_x,k_x)$.

 Consider the natural morphism:
\begin{align*}
  \hat{s}_1: Hom(\mathcal{I}_n,k_x)\otimes Ext^1(k_x,\mathcal{I}_n)\to Ext^1(\mathcal{I}_n,\mathcal{I}_n) \\
  \hat{s}_2: Hom(\mathcal{I}_n,k_x)\otimes Ext^1(k_x,\mathcal{I}_n)\to Ext^1(k_x,k_x).
\end{align*}
and the short exact sequences
\begin{align*}
  Hom(\mathcal{I}_n,k_x)\xrightarrow{\hat{s}_1(-\otimes r_2)} Ext^1(\mathcal{I}_n,\mathcal{I}_n)\to Ext^1(\mathcal{I}_n,\mathcal{I}_{n-1}) \\
  Ext^{1}(k_x,\mathcal{I}_n)\xrightarrow{\hat{s}_1(r_1\otimes -)}Ext^1(\mathcal{I}_n,\mathcal{I}_n)\to Ext^1(\mathcal{I}_{n+1},\mathcal{I}_n).
\end{align*}
For any element $u\in Hom(\mathcal{I}_n,k_x)$, $(\hat{s}_1(u\otimes r_2),0)\in V$. Moreover, by diagram chasing (we left it to interested readers)
\begin{equation}
  \label{eq:chasing1}
  dp_S^1(\hat{s}_1(u\otimes r_2),0)=\hat{s}_2(u\otimes r_2).
\end{equation}
Simlilarly, for any element $v\in Ext^1(k_x,\mathcal{I}_n)$, $(0,\hat{s}_1(r_1\otimes v))\in V'$ and
\begin{equation}
  \label{eq:chasing2}
  dp_S^2(0,\hat{s}_1(r_1\otimes v))=\hat{s}_2(r_1\otimes v).
\end{equation}

 By Proposition 5.28 of \cite{neguct2018hecke}, the tangent space $T_{t}\mathfrak{Y}$ is the space of $(w_0,w_1,w_1',w_2,u,v)$ in
  $$Ext^1(\mathcal{I}_{n-1},\mathcal{I}_{n-1})\oplus Ext^{1}(\mathcal{I}_n,\mathcal{I}_n)\oplus  Ext^{1}(\mathcal{I}_n,\mathcal{I}_n)\oplus Ext^{1}(\mathcal{I}_{n+1},\mathcal{I}_{n+1})\oplus Ext^1(k_x,k_x)\oplus Ext^1(k_x,k_x)$$
  such that 
\begin{enumerate}
\item $(w_0,w_1)$ and $(w_0,w_1')$ are in $A$.
\item $(w_1,w_2)$ and $(w_1',w_2)$ are in $A'$.
\item $dp_S^1(w_1,w_0)=dp_S^2(w_2,w_1')=u$,
\item $dp_S^1(w_1',w_0)=dp_S^2(w_2,w_1)=v$.
\end{enumerate}
If $(w_0,w_1,w_1',w_2,u,v)$ is in the tangent space, then so is
$$\frac{(2w_0,w_1+w_1',w_1+w_1',2w_2,u+v,u+v)}{2} \text{ and } (0,w_1'-w_1,w_1'-w_1,0,u-v,v-u).$$
Hence the tangent space $T_t\mathfrak{Y}$ decompose into a direct sum of two subspaces: the subspace $w_1=w_1'$ which is $T_tS^{[n-1,n,n+1]}$ and the subspace $N$ which consists of elements $(w_1,u)\in Ext^{1}(\mathcal{I}_n,\mathcal{I}_n)\oplus Ext^{1}(k_x,k_x)$ such that
\begin{enumerate}
\item $w_1$ maps to $0$ in $Ext^{1}(\mathcal{I}_{n}, \mathcal{I}_{n-1})$ and $Ext^{1}(\mathcal{I}_{n+1}, \mathcal{I}_{n}),$
\item $dp_S^1(w_1,0)=dp_S^2(0,w_1)=u.$
\end{enumerate}

The image of $s$ is $(\hat{s}_1(r_1\otimes r_2),\hat{s}_2(r_1\otimes r_2))$ and we only need to prove that it is in $N$. It follows from the fact that
$$dp_S^1(\hat{s}_1(r_1\otimes r_2),0)=dp_S^2(0,\hat{s}_1(r_1\otimes r_2))=\hat{s}_2(r_1\otimes r_2)$$
by \cref{eq:chasing1}, \cref{eq:chasing2}.

\end{proof}

\subsection{The definition of $\mathfrak{H}_{m,k}^{\pm}$}
Consider the dual of $s$
$$s^\vee:(p_n\circ q_n)^*T^*(S^{[n]}\times S)\to \mathcal{L}_1\mathcal{L}_2^{-1}.$$  Since $L(p_n\circ q_n)^*$ is the left adjoint functor of $R(p_n\circ q_n)_*$, $s^\vee$ induces another morphism
\begin{equation*}
\mathfrak{h}:T^*(S^{[n]}\times S)\to R(p_n\circ q_n)_*(\mathcal{L}_1\mathcal{L}_2^{-1}).
\end{equation*}

The identity
$$\mathcal{L}_1^{m-1-k}\mathcal{L}_2^k\otimes \mathcal{L}_1\mathcal{L}_2^{-1}\cong \mathcal{L}_1^{m-k}\mathcal{L}_2^{k-1}$$
induces morphisms
\begin{equation}
  \label{onlyonce}
  R(p_n\circ q_n)_*\mathcal{L}_1^{m-1-k}\mathcal{L}_2^k\otimes^{L}R(p_n\circ q_n)_*\mathcal{L}_1\mathcal{L}_2^{-1}\to R(p_n\circ q_n)_*\mathcal{L}_1^{m-k}\mathcal{L}_2^{k-1}
\end{equation}
and we define
\begin{equation}
  \label{eq:test1}
\tau_{m}^k:R(p_n\circ q_n)_*\mathcal{L}_1^{m-1-k}\mathcal{L}_2^k\otimes^{L} T^*(S^{[n]}\times S)\to R(p_n\circ q_n)_*\mathcal{L}_1^{m-k}\mathcal{L}_2^{k-1}.
\end{equation}
to be the composition of $\mathfrak{h}$ and \cref{onlyonce}.

Still by \cref{lem:4.5},
$$Rq_{n*}(\mathcal{L}_1^{m-1})=\{\mathcal{L}^m\to \mathcal{L}^{m-1}\}$$
is the cone of $\mathcal{L}^m$ to $\mathcal{L}^{m-1}$. Hence we have exact triangles
\begin{equation}
  Rp_{n*}\mathcal{L}^m\to Rp_{n*}\mathcal{L}^{m-1}\xrightarrow{\mu_m} R(p_n\circ q_n)_*(\mathcal{L}_1^{m-1}) \xrightarrow{\nu_m}Rp_{n*}\mathcal{L}^m[1].
\end{equation}
\begin{definition}
  \label{extension}
  When $m>k\geq 0$, we define
  $$\mathfrak{H}_{m,k}^+:h_{m,k+1}^+\otimes T^*(S^{[n]}\times S)\to  h_{m,k}^+$$
  by
  \begin{equation}
    \mathfrak{H}_{m,k}^+=
    \begin{cases}
      \tau_{m,k} & k>0 \\
      \nu_m\circ \tau_{m,0}& k=0
    \end{cases}
  \end{equation}
  and when $m<k\leq 0$, we define
  $$\mathfrak{H}_{m,k}^-:h_{m,k}^-\otimes T^*(S^{[n]}\times S)\to  h_{m,k-1}^+$$
  by
   \begin{equation}
    \mathfrak{H}_{m,k}^+=
    \begin{cases}
      \tau_{m,k} & k<0 \\
      \tau_{m,0}\circ \mu_m & k=0
    \end{cases}
  \end{equation}
\end{definition}

\begin{proof}[Proof of \cref{ext2}]
  We only compute the postive part of the extension and only consider the $k>0$ and the case $k=0$ follows from the \cref{eq:7.6}. The extension class is induced from the short exact sequence on $\mathfrak{Y}$:
  $$0\to \mathcal{L}_{1}^{m-k-1}\mathcal{L}_2^{k}\mathcal{O}_{\Delta_{\mathfrak{Y}}}\to \mathcal{L}_{1}^{m-k-2}\mathcal{L}_2^{k+1}\mathcal{O}_{2\Delta_{\mathfrak{Y}}}\to \mathcal{L}_1^{m-k-2}\mathcal{L}_2^{k+1}\mathcal{O}_{\Delta_{\mathfrak{Y}}}\to 0$$
  where $\mathcal{O}_{2\Delta_{\mathfrak{Y}}}$ is the cokernel of $\mathcal{O}(-2\Delta_{\mathfrak{Y}})\to \mathcal{O}_{\mathfrak{Y}}$.   It induces an extension class in
  \begin{align*}
    &Ext^1_{\mathfrak{Y}}( \mathcal{L}_1^{m-k-2}\mathcal{L}_2^{k+1}\mathcal{O}_{\Delta_{\mathfrak{Y}}},\mathcal{L}_{1}^{m-k-1}\mathcal{L}_2^{k}\mathcal{O}_{\Delta_{\mathfrak{Y}}}) \\
    =&Ext^1_{\Delta_{\mathfrak{Y}}}(\mathcal{L}_1^{m-k-2}\mathcal{L}_2^{k+1},\mathcal{L}_{1}^{m-k-1}\mathcal{L}_2^{k})\oplus Hom_{\Delta{\mathfrak{Y}}}(\mathcal{L}_{1}^{m-k-1}\mathcal{L}_2^{k},\mathcal{L}_{1}^{m-k-1}\mathcal{L}_2^{k}).
  \end{align*}
   by \cref{thm1.8} and (3) of  \cref{2pr:3.1}. Still by \cref{thm1.8}, the extension class is $(0,id)$ and thus would map to $(0,\mathfrak{H}_{m,k}^{+},0,0,\cdots)$ by the functorial property.
\end{proof}
\appendix

 \section{ Exterior Powers and  Formal Series}
 \label{sec:b}
\begin{definition}
   Let $V$ be a locally free sheaf over $X$, we define the exterior powers of $V$ by
 $$\wedge^{\bullet}(xV)=\sum_{i=0}^\infty(-x)^i[\wedge^iV], \quad \wedge^\bullet(-xV)=\sum_{i=0}^\infty x^i[S^iV]$$
 as elements in $K(X)[[x]]$.
\end{definition}
\begin{definition}
  \label{sku}
  For a two term complex of locally free sheaves
  $$U:=\{W\xrightarrow{\mathfrak{u}}V\}$$
  we define
  $$\wedge^\bullet(-xU)=\wedge^\bullet(xW)\wedge^\bullet(-xV)\qquad \wedge^\bullet(xU)=\wedge^\bullet(xV)\wedge^\bullet(-xW).n$$
  $[S^k(U)]$ and $[\wedge^k(U)]$ are the $x^k$ coefficients of $\wedge^\bullet(-xU)$ and $\wedge^\bullet(xU)$ respectively.
\end{definition}
\begin{lemma}[Exercise \RN{2}.5.16 of \cite{hartshorne2013algebraic}]For a two term complex of locally free sheaves
  $$U:=\{W\xrightarrow{\mathfrak{u}}V\},$$
  we have
  \begin{equation}
  \label{eq:=1}
\wedge^{\bullet}(xU)\wedge^\bullet(-xU)=1.  
\end{equation}
\end{lemma}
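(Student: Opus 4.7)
The plan is to reduce the identity, which concerns a two-term complex, to the classical single-bundle Koszul identity by pure formal manipulation in $K(X)[[x]]$, and then invoke Hartshorne Exercise II.5.16 (cited in the excerpt) for the single-bundle case.

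First I would unpack \cref{sku}: by definition
\begin{equation*}
\wedge^{\bullet}(xU)\wedge^{\bullet}(-xU) = \bigl(\wedge^{\bullet}(xV)\wedge^{\bullet}(-xW)\bigr)\cdot \bigl(\wedge^{\bullet}(xW)\wedge^{\bullet}(-xV)\bigr).
\end{equation*}
Since $K(X)[[x]]$ is a commutative ring (the tensor product of classes of locally free sheaves is commutative in $K(X)$), I may freely reorder the four factors to obtain
\begin{equation*}
\wedge^{\bullet}(xU)\wedge^{\bullet}(-xU) = \bigl(\wedge^{\bullet}(xV)\wedge^{\bullet}(-xV)\bigr)\cdot\bigl(\wedge^{\bullet}(xW)\wedge^{\bullet}(-xW)\bigr).
\end{equation*}
Thus \cref{eq:=1} reduces to the single-bundle identity $\wedge^{\bullet}(xE)\wedge^{\bullet}(-xE)=1$ for any locally free sheaf $E$.

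Next I would establish this single-bundle identity. Writing out the coefficient of $x^n$, it amounts to the relation
\begin{equation*}
\sum_{i=0}^{n}(-1)^{i}\,[\wedge^{i}E]\cdot [S^{n-i}E] = 0 \qquad \text{for every } n\geq 1,
\end{equation*}
together with the trivial $n=0$ case. This is precisely the content of Hartshorne Exercise II.5.16, and geometrically reflects the acyclicity of the Koszul-type complex
\begin{equation*}
0\to \wedge^{n}E\to \wedge^{n-1}E\otimes E \to \cdots \to E\otimes S^{n-1}E \to S^{n}E \to 0
\end{equation*}
(more precisely, its dual form relating the symmetric and exterior algebras); the alternating sum of the classes in $K(X)$ of an exact sequence of locally free sheaves is zero, yielding the displayed identity. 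I would simply quote Hartshorne at this point rather than redo the Koszul computation.

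Combining the two steps gives $\wedge^{\bullet}(xU)\wedge^{\bullet}(-xU)=1\cdot 1=1$. There is essentially no obstacle here: the only non-formal input is the single-bundle Koszul relation, which is standard and already referenced in the statement. The role of \cref{sku} is precisely to arrange the signs on $V$ and $W$ so that the two factors of $U$ recombine into independent single-bundle identities.
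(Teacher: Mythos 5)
Your proof is correct and follows the same route the paper intends: the paper offers no written proof beyond the citation to Hartshorne, and your argument---unpacking \cref{sku}, reordering the four factors in the commutative ring $K(X)[[x]]$ so that \cref{eq:=1} splits into the two single-bundle identities $\wedge^{\bullet}(xV)\wedge^{\bullet}(-xV)=1$ and $\wedge^{\bullet}(xW)\wedge^{\bullet}(-xW)=1$, each of which is the standard Koszul relation $\sum_{i=0}^{n}(-1)^{i}[\wedge^{i}E][S^{n-i}E]=0$---is exactly the computation being left to the reader. No gaps.
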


\begin{definition}[Section 3.8 of \cite{negut2017shuffle}]
  \label{appendix1}
  For any non-negative integer $m$, we define $h_{m}^\pm\in K(S^{[n]}\times S)$ 
  \begin{align}
      h_m^+:=[\omega_S]\sum_{i=0}^m(-1)^{m-i}[\omega_S^{-i}][S^i\mathcal{I}_n][\wedge^{m-i}\mathcal{I}_n] \\
    h_{m}^{-}:=\sum_{i=0}^{m}(-1)^i[\omega_S^{i}][\wedge^{i}\mathcal{I}_n^\vee][S^{m-i}\mathcal{I}_n^\vee] 
  \end{align}
  where we abuse the notation to denote
  $$\mathcal{I}_n:=\{W_n\xrightarrow{s}V_n\}$$
  in the short exact sequence \cref{eq:resolve}.
\end{definition}

\begin{lemma}
  \cref{appendix1} and \cref{4eq:lem2.11} are equivalent.
\end{lemma}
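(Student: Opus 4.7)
The plan is to encode both definitions as formal power series and show they agree. Writing $\omega := [\omega_S]$, set
$$H_A^+(x) := \sum_{m=0}^{\infty} x^m h_m^+$$
using Definition~\ref{appendix1}, and define $H_B^+(x)$ analogously from Definition~\ref{4eq:lem2.11} (with $h_0^+ = \omega$). It suffices to prove $H_A^+(x) = H_B^+(x)$ in $K(S^{[n]}\times S)[[x]]$; the identity for $h_m^-$ will then follow by the same argument after replacing $\mathcal{I}_n$ with $\mathcal{I}_n^\vee$ and $\omega$ with $\omega^{-1}$, and swapping the roles of $\wedge^\bullet(x\,\cdot)$ and $\wedge^\bullet(-x\,\cdot)$.

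For $H_A^+$, substituting $j = m-i$ and interchanging the order of summation factors the double sum immediately, yielding the closed form
$$H_A^+(x) = \omega \cdot \wedge^\bullet(-x\omega^{-1}\mathcal{I}_n) \cdot \wedge^\bullet(x\mathcal{I}_n),$$
directly from the generating-function definitions in Definition~\ref{sku}.

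For $H_B^+$, I would swap the order of the triple summation to $(i, m, j)$ with $0 \le i \le j \le m-1$ and evaluate the inner finite geometric sum $\sum_{j=i}^{m-1}\omega^{-j}$ in closed form. The crucial input is the identity $\wedge^\bullet(xU)\wedge^\bullet(-xU) = 1$ of \cref{eq:=1}, applied to $U = \mathcal{I}_n$: extracting the coefficient of $x^m$ gives
$$\sum_{i=0}^{m}(-1)^i [\wedge^i\mathcal{I}_n][S^{m-i}\mathcal{I}_n] = 0 \qquad \text{for every } m > 0.$$
This vanishing allows the truncated outer sum to be replaced by the full infinite geometric series $\sum_{j=0}^\infty \omega^{-j}$, since each discarded summand is itself the product of a term like the displayed zero with a constant. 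Applying $(1-\omega)/(1-\omega^{-1}) = -\omega$ to recombine then brings $H_B^+(x)$ into the same closed form as $H_A^+(x)$. The only mild obstacle is careful book-keeping of signs and ranges of summation when interchanging indices; no geometric input beyond Definition~\ref{sku} and \cref{eq:=1} is required.
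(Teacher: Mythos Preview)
Your approach is essentially the paper's own: both arguments use nothing beyond the identity $\wedge^\bullet(xU)\wedge^\bullet(-xU)=1$ of \cref{eq:=1}, interchange the order of a double sum, and evaluate a finite geometric progression in $\omega^{\pm 1}$; you have simply packaged the coefficient-by-coefficient manipulation into a single power-series identity, which is a cosmetic rather than a structural difference.

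One small caution: the substitution $(\mathcal{I}_n,\omega)\mapsto(\mathcal{I}_n^\vee,\omega^{-1})$ together with swapping $\wedge$ and $S$ does \emph{not} literally carry the $h_m^+$ statement to the $h_m^-$ statement --- the overall prefactors ($\omega$ versus $1$) and the placement of the sign $(-1)^j$ in \cref{4eq:lem2.11} do not match under this map, so you cannot deduce the minus case by pure symmetry. The paper accordingly runs the parallel computation for $h_m^-$ separately; you should do the same rather than invoke a formal substitution.
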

\begin{proof}
   First note that
  \begin{equation}
    \label{eq:+1}
    \sum_{i=0}^m[\wedge^i\mathcal{I}_n][S^{m-i}\mathcal{I}_n]=0
  \end{equation}
  by \cref{eq:=1}. Thus
  \begin{align*}
    &[\omega_S]\sum_{i=0}^m(-1)^{m-i}[\omega_S^{-i}][S^i\mathcal{I}_n][\wedge^{m-i}\mathcal{I}_n]   \\
         &=[\omega_S]\sum_{i=1}^m(-1)^{m-i}([\omega_S^{-i}]-1)[S^i\mathcal{I}_n][\wedge^{m-i}\mathcal{I}_n] & \text{ by } \cref{eq:+1} \\
         &=([\omega_S]-1)\sum_{i=1}^{m}(-1)^{m-i}[S^{i}\mathcal{I}_n][\wedge^{m-i}\mathcal{I}_n]\sum_{j=1}^{i}[\omega_S^{-j+1}]  \\
         &=([\omega_S]-1)\sum_{j=1}^{m}[\omega_S^{-j+1}]\sum_{i=j}^{m}(-1)^{m-i}[S^{i}\mathcal{I}_n][\wedge^{m-i}\mathcal{I}_n]  \\
         &=(1-[\omega_S])\sum_{j=0}^{m-1}[\omega_S^{-j}]\sum_{i=0}^{j}(-1)^{i}[S^{m-i}\mathcal{I}_n][\wedge^{i}\mathcal{I}_n]. & \text{ by } \cref{eq:+1}
  \end{align*}
  Hence $h_{m}^{+}$ are equivalent in two defintions.
  
  Then we note that
  \begin{equation}
    \label{eq:+2}
    \sum_{i=0}^{m}(-1)^i[S^{i}\mathcal{I}_n^\vee][\wedge^{m-i}\mathcal{I}_n^\vee]=0
  \end{equation}
  by \cref{eq:=1}. Thus
  \begin{align*}
    &\sum_{i=0}^{m}(-1)^i[\omega_S^{i}][\wedge^{i}\mathcal{I}_n^\vee][S^{m-i}\mathcal{I}_n^\vee] \\
             &=\sum_{i=1}^m(-1)^i([\omega_S^{i}]-1)[\wedge^i\mathcal{I}_n^\vee][S^{m-i}\mathcal{I}_n^\vee] & \text{ by } \cref{eq:+2}  \\
             &=([\omega_S]-1)\sum_{i=1}^m(-1)^i[\wedge^i\mathcal{I}_n^\vee][S^{m-i}\mathcal{I}_n^\vee]\sum_{j=1}^i[\omega_S^{j-1}]  \\
             &=([\omega_S]-1)\sum_{j=1}^{m}[\omega_S^{j-1}]\sum_{i=j}^m(-1)^i[\wedge^i\mathcal{I}_n^\vee][S^{m-i}\mathcal{I}_n^\vee]   \\
    &=(1-[\omega_S])\sum_{j=0}^{m-1}(-1)^{j}[\omega_S^{j}]\sum_{i=0}^j(-1)^i[\wedge^{m-i}\mathcal{I}_n^\vee][S^i\mathcal{I}_n^\vee]  & \text{ by } \cref{eq:+2} 
  \end{align*}
  Hence $h_{m}^{-}$ are equivalent in two defintions.
\end{proof}

\bibliography{elliptic-hall-algebra1}

\begin{thebibliography}{10}

\bibitem{MR4003476}
Dima Arinkin, Andrei C\u{a}ld\u{a}raru, and M\'{a}rton Hablicsek.
\newblock Formality of derived intersections and the orbifold {HKR}
  isomorphism.
\newblock {\em J. Algebra}, 540:100--120, 2019.

\bibitem{MR2922373}
Igor Burban and Olivier Schiffmann.
\newblock On the {H}all algebra of an elliptic curve, {I}.
\newblock {\em Duke Math. J.}, 161(7):1171--1231, 2012.

\bibitem{MR2668555}
Sabin Cautis, Joel Kamnitzer, and Anthony Licata.
\newblock Coherent sheaves and categorical {$sl_2$} actions.
\newblock {\em Duke Math. J.}, 154(1):135--179, 2010.

\bibitem{MR3874690}
Sabin Cautis, Aaron~D. Lauda, Anthony~M. Licata, Peter Samuelson, and Joshua
  Sussan.
\newblock The elliptic {H}all algebra and the deformed {K}hovanov {H}eisenberg
  category.
\newblock {\em Selecta Math. (N.S.)}, 24(5):4041--4103, 2018.

\bibitem{MR2988902}
Sabin Cautis and Anthony Licata.
\newblock Heisenberg categorification and {H}ilbert schemes.
\newblock {\em Duke Math. J.}, 161(13):2469--2547, 2012.

\bibitem{diaconescu2020mckay}
Duiliu-Emanuel Diaconescu, Mauro Porta, and Francesco Sala.
\newblock Mckay correspondence, cohomological hall algebras and
  categorification.
\newblock {\em arXiv preprint arXiv:2004.13685}, 2020.

\bibitem{MR1463869}
Jintai Ding and Kenji Iohara.
\newblock Generalization of {D}rinfeld quantum affine algebras.
\newblock {\em Lett. Math. Phys.}, 41(2):181--193, 1997.

\bibitem{ES98}
G.~Ellingsrud and S.~A. Str{\o}mme.
\newblock An intersection number for the punctual {H}ilbert scheme of a
  surface.
\newblock {\em Trans. Amer. Math. Soc.}, 350(6):2547--2552, 1998.

\bibitem{MR2566895}
B.~Feigin, K.~Hashizume, A.~Hoshino, J.~Shiraishi, and S.~Yanagida.
\newblock A commutative algebra on degenerate {$\Bbb{CP}^1$} and {M}acdonald
  polynomials.
\newblock {\em J. Math. Phys.}, 50(9):095215, 42, 2009.

\bibitem{feigin2001quantized}
B.~L. Feigin and A.~V. Odesskii.
\newblock Quantized moduli spaces of the bundles on the elliptic curve and
  their applications.
\newblock In {\em Integrable structures of exactly solvable two-dimensional
  models of quantum field theory ({K}iev, 2000)}, volume~35 of {\em NATO Sci.
  Ser. II Math. Phys. Chem.}, pages 123--137. Kluwer Acad. Publ., Dordrecht,
  2001.

\bibitem{MR2854154}
B.~L. Feigin and A.~I. Tsymbaliuk.
\newblock Equivariant {$K$}-theory of {H}ilbert schemes via shuffle algebra.
\newblock {\em Kyoto J. Math.}, 51(4):831--854, 2011.

\bibitem{grojnowski1996instantons}
I~Grojnowski.
\newblock Instantons and affine algebras i: The {H}ilbert scheme and vertex
  operators.
\newblock {\em Mathematical Research Letters}, 3(2):275--291, 1996.

\bibitem{hartshorne2013algebraic}
Robin Hartshorne.
\newblock {\em Algebraic geometry}, volume~52.
\newblock Springer Science \& Business Media, 2013.

\bibitem{MR2244106}
D.~Huybrechts.
\newblock {\em Fourier-{M}ukai transforms in algebraic geometry}.
\newblock Oxford Mathematical Monographs. The Clarendon Press, Oxford
  University Press, Oxford, 2006.

\bibitem{jiang2018derived}
Qingyuan Jiang and Naichung~Conan Leung.
\newblock Derived category of projectivization and flops.
\newblock {\em arXiv preprint arXiv:1811.12525}, 2018.

\bibitem{kapranov2019cohomological}
Mikhail Kapranov and Eric Vasserot.
\newblock The cohomological hall algebra of a surface and factorization
  cohomology.
\newblock {\em arXiv preprint arXiv:1901.07641}, 2019.

\bibitem{MR3205569}
Mikhail Khovanov.
\newblock Heisenberg algebra and a graphical calculus.
\newblock {\em Fund. Math.}, 225(1):169--210, 2014.

\bibitem{MR3057950}
J\'{a}nos Koll\'{a}r.
\newblock {\em Singularities of the minimal model program}, volume 200 of {\em
  Cambridge Tracts in Mathematics}.
\newblock Cambridge University Press, Cambridge, 2013.
\newblock With a collaboration of S\'{a}ndor Kov\'{a}cs.

\bibitem{Kollar-Mori}
J\'{a}nos Koll\'{a}r and Shigefumi Mori.
\newblock {\em Birational geometry of algebraic varieties}, volume 134 of {\em
  Cambridge Tracts in Mathematics}.
\newblock Cambridge University Press, Cambridge, 1998.
\newblock With the collaboration of C. H. Clemens and A. Corti, Translated from
  the 1998 Japanese original.

\bibitem{kontsevich2010cohomological}
Maxim Kontsevich and Yan Soibelman.
\newblock Cohomological {H}all algebra, exponential {H}odge structures and
  motivic {Donaldson-Thomas} invariants.
\newblock {\em Commun. Num. Theor. Phys.}, 5(arXiv: 1006.2706):231--352, 2010.

\bibitem{MR3774400}
Andreas Krug.
\newblock Symmetric quotient stacks and {H}eisenberg actions.
\newblock {\em Math. Z.}, 288(1-2):11--22, 2018.

\bibitem{10.1155/S1073792802110129}
Wei-Ping Li, Zhenbo Qin, and Weiqiang Wang.
\newblock {Hilbert schemes and W-algebras}.
\newblock {\em International Mathematics Research Notices},
  2002(27):1427--1456, 01 2002.

\bibitem{maulik2020}
Davesh Maulik and Andrei Neguţ.
\newblock Lehn’s formula in chow and conjectures of beauville and voisin.
\newblock {\em Journal of the Institute of Mathematics of Jussieu}, page
  1–39, 2020.

\bibitem{MR2377852}
Kei Miki.
\newblock A {$(q,\gamma)$} analog of the {$W_{1+\infty}$} algebra.
\newblock {\em J. Math. Phys.}, 48(12):123520, 35, 2007.

\bibitem{MR1711344}
Hiraku Nakajima.
\newblock {\em Lectures on {H}ilbert schemes of points on surfaces}, volume~18
  of {\em University Lecture Series}.
\newblock American Mathematical Society, Providence, RI, 1999.

\bibitem{negut2017shuffle}
Andrei Negu\c{t}.
\newblock Shuffle algebras associated to surfaces.
\newblock {\em Selecta Math. (N.S.)}, 25(3):Paper No. 36, 57, 2019.

\bibitem{MR3283004}
Andrei Negut.
\newblock The shuffle algebra revisited.
\newblock {\em Int. Math. Res. Not. IMRN}, (22):6242--6275, 2014.

\bibitem{neguct2017w}
Andrei Negu{\c{t}}.
\newblock W-algebras associated to surfaces.
\newblock {\em arXiv preprint arXiv:1710.03217}, 2017.

\bibitem{neguct2018hecke}
Andrei Negu{\c{t}}.
\newblock Hecke correspondences for smooth moduli spaces of sheaves.
\newblock {\em arXiv preprint arXiv:1804.03645}, 2018.

\bibitem{porta2019categorification}
Mauro Porta and Francesco Sala.
\newblock Two-dimensional categorified {H}all algebras.
\newblock {\em arXiv preprint arXiv:1903.07253}, 2019.

\bibitem{rapcak2020cohomological}
Miroslav Rapcak, Yan Soibelman, Yaping Yang, and Gufang Zhao.
\newblock Cohomological hall algebras and perverse coherent sheaves on toric
  calabi-yau 3-folds.
\newblock {\em arXiv preprint arXiv:2007.13365}, 2020.

\bibitem{MR3150250}
O.~Schiffmann and E.~Vasserot.
\newblock Cherednik algebras, {W}-algebras and the equivariant cohomology of
  the moduli space of instantons on {$\bold{A}^2$}.
\newblock {\em Publ. Math. Inst. Hautes \'{E}tudes Sci.}, 118:213--342, 2013.

\bibitem{MR2886289}
Olivier Schiffmann.
\newblock Drinfeld realization of the elliptic {H}all algebra.
\newblock {\em J. Algebraic Combin.}, 35(2):237--262, 2012.

\bibitem{schiffmann2013}
Olivier Schiffmann and Eric Vasserot.
\newblock The elliptic hall algebra and the $k$ -theory of the hilbert scheme
  of $\mathbb{A}^{2}$.
\newblock {\em Duke Math. J.}, 162(2):279--366, 02 2013.

\bibitem{MR3484148}
Lei Song.
\newblock On the universal family of {H}ilbert schemes of points on a surface.
\newblock {\em J. Algebra}, 456:348--354, 2016.

\bibitem{MR1390671}
Richard~G. Swan.
\newblock Hochschild cohomology of quasiprojective schemes.
\newblock {\em J. Pure Appl. Algebra}, 110(1):57--80, 1996.

\bibitem{takahashi2020categorical}
Ryan Takahashi.
\newblock {\em A Categorical sl 2 Action on Some Moduli Spaces of Sheaves}.
\newblock PhD thesis, University of Oregon, 2020.

\bibitem{toda2020hall}
Yukinobu Toda.
\newblock Hall-type algebras for categorical donaldson--thomas theories on
  local surfaces.
\newblock {\em Selecta Mathematica}, 26(4):64, Sep 2020.

\bibitem{zhao2019k}
Yu~Zhao.
\newblock {On the K-Theoretic Hall Algebra of a Surface}.
\newblock {\em International Mathematics Research Notices}, 07 2020.
\newblock rnaa123.

\end{thebibliography}
\bibliographystyle{plain}

\end{document}